\newtheorem{remark}{Remark}[section]
\newtheorem{theorem}[remark]{Theorem}
\newtheorem{corollary}[remark]{Corollary}
\newtheorem{lemma}[remark]{Lemma}
\newtheorem{proposition}[remark]{Proposition}
\numberwithin{equation}{section}
\newcommand{\nocontentsline}[3]{}
\newcommand{\tocless}[2]{\bgroup\let\addcontentsline=\nocontentsline#1{#2}\egroup}
\makeatletter \renewcommand{\@cftmaketoctitle}{} \makeatother
\title{On the Stability of the $s$-Nonlocal $p$-Obstacle Problem and their Coincidence Sets and Free Boundaries}
\author{Catharine W. K. Lo\thanks{Department of Mathematics, City University of Hong Kong, Kowloon, Hong Kong\\ Email address: wingkclo@cityu.edu.hk} \, and Jos\'e Francisco Rodrigues\thanks{CMAFcIO -- Departamento de Matem\'atica, Faculdade de Ci\^encias, Universidade de Lisboa P-1749-016 Lisboa, Portugal\\ Email address: jfrodrigues@ciencias.ulisboa.pt}}
\date{}
\begin{document}
\maketitle

\begin{abstract}
    We show that the solutions to the nonlocal obstacle problems for the nonlocal $-\Delta_p^s$ operator, when the fractional parameter $s\to\sigma$ for $0<\sigma\leq1$, converge to the solution of the corresponding obstacle problem for $-\Delta_p^\sigma$, being $\sigma=1$ the classical obstacle problem for the local $p$-Laplacian. We discuss the weak stability of the quasi-characteristic functions of coincidence sets of the solution with the obstacle, which is a strong convergence of their characteristic functions when $s\nearrow 1$ under a nondegeneracy condition.  This stability can be shown also in terms of the convergence of the free boundaries, as well as of the coincidence sets, in Hausdorff distance when $s\nearrow 1$, under non-degeneracy local assumptions on the external force and a local topological property of the coincidence set of the limit classical obstacle problem for the local $p$-Laplacian, essentially when the limit coincidence set is the closure of its interior. 
\end{abstract}

\tocless\section{Introduction}
\hypertarget{introduction}{}
\bookmark[level=section,dest=introduction]{Introduction}

The $s$-nonlocal $p$-obstacle problem,  in a Lipschitz bounded domain $\Omega\subset \mathbb{R}^n$, consists of finding
\begin{equation}\label{ObsProb}
u\in \mathbb{K}^s:\quad
    \langle-\Delta_p^su-f,v-u\rangle\geq0\quad\forall v\in \mathbb{K}^s ,
\end{equation} 
for $f\in L^\infty(\Omega)$, 
in the closed convex set of the fractional Sobolev space $W^{s,p}_0(\Omega)$, for $0<s<1$ and $1<p<\infty$, given by 
\begin{equation}\label{ConvexSetDef}\mathbb{K}^s=\{v\in W^{s,p}_0(\Omega):v\geq0 \text{ a.e. in }\Omega \}.\end{equation} Here, $\langle\cdot,\cdot\rangle$ denotes the duality bracket between $W^{s,p}_0(\Omega)$ and its dual $W^{-s,p'}(\Omega)\supset L^\infty(\Omega)$, for $p'=p/(p-1)$, and the nonlocal nonlinear homogeneous fractional $p$-Laplacian $\Delta_p^s$ is defined by 
\begin{equation}\label{LpLap} \langle-\Delta_p^su, v\rangle= \kappa_s\int_{\mathbb{R}^n}\int_{\mathbb{R}^n}\frac{|u(x)-u(y)|^{p-2}(u(x)-u(y))(v(x)-v(y))}{|x-y|^{n+sp}}\,dx\,dy \quad u,v\in W^{s,p}_0(\Omega),\end{equation} with $u,v=0$ a.e. in  $\mathbb{R}^n\backslash\Omega$ and the normalising constant $\kappa_s=(1-s)C_{n,p}/2 $ has the constant $C_{n,p}$ given by $C_{n,p}^{-1}=\frac{1}{p}\int_{\mathbb{S}^{n-1}}\omega^p_n\,dS_\omega$ for the unit sphere $\mathbb{S}^{n-1}$ in $\mathbb{R}^n$, as in \cite{BonderSalort2020StabilityNonlocal}. 
Note that the constant $\kappa_s$ vanishes as $s\nearrow1$ and is such that the fractional $p$-Laplacian $\Delta_p^s\varphi$ tends, in the distributional sense for smooth functions $\varphi$, to the classical $p$-Laplacian $\Delta_p\varphi=\nabla\cdot(|\nabla\varphi|^{p-2}\nabla\varphi)$ in the limit case $s=1$. The vanishing term $1-s$ is absent in several other definitions in the literature (see, for instance,  \cite{LindgrenLindqvist2014CVPDEFractionalEigenvalues}, \cite{brasco2015stability}, \cite{IannizzottoMosconiSquassina2016RMI_FracPLapGlobalHolder} and \cite{IannizzottoMosconiSquassina2020JFA_FracPLapTMonotoneLewyStampacchia}). In the linear case where $p=2$, we have the fractional Laplacian in \eqref{LpLap} (see for instance \cite{FracObsRiesz}, \cite{RosOton2016Survey}), where it is known that the obstacle problem transforms into a degenerate classical Signorini problem in the cylinder $\Omega\times(0,\infty)$, by performing the Caffarelli-Silvestre extension to $\mathbb{R}^{n+1}$ (see, for instance, \cite{CaffarelliSalsaSilvestre}).

After recalling, in Section 2, some useful properties on the fractional solutions to the nonlocal semilinear Dirichlet problem, including Hölder regularity estimates, we use, in Section 3, the bounded penalization to the obstacle problem to obtain global and uniform local Hölder continuity to the solutions of the $s$-nonlocal $p$-obstacle problem \eqref{ObsProb}.

The purpose of this work is to extend to those solutions $u^s$ of \eqref{ObsProb} the continuous dependence with respect to the fractional parameter $s$ in suitable fractional Sobolev spaces. This is done in Section 4, including the case $s\nearrow1$, which limit corresponds to the classical $p$-obstacle problem \eqref{ObsProb}  with
$u^1\in \mathbb{K}^1$ in the usual Sobolev space $W^{1,p}_0(\Omega)$, $1<p<\infty$. The Hilbertian case $p=2$, corresponding to the fractional Laplacian, has been considered in Remark 3.9 of \cite{FracObsRiesz}. Stability results for the eigenfunctions for the nonlocal $p$-Laplacian in fractional norms have been obtained earlier in \cite{brasco2015stability} and extended to solutions of certain semilinear Dirichlet problems in \cite{BonderSalort2020StabilityNonlocal}
and in \cite{BonderSalort2022AsymptoticSFracEnergies}.

Our main new results concern the local stability, under suitable nondegeneracy conditions on $f$ and on the free boundary $\Phi^1=\partial\{u^1>0\}\cap\Omega$  of the limit problem, of the coincidence sets $I^s=\{u^s=0\}$, and also of $\Phi^s$, with respect to $s\nearrow1$.

First, in Section 5, we observe that the solution $u^s$ to \eqref{ObsProb}, with $0<s<1$, also solves a semilinear Dirichlet problem ($f^-=f^+-f, f^+=f\vee 0$)
\begin{equation}-\Delta_p^su^s=f-f^-(1- \vartheta^s(u^s))\quad\text{ a.e. in }\Omega,\quad\quad u^s=0\quad\text{ a.e. in }\mathbb{R}^n\backslash\Omega,\end{equation}
involving a quasi-characteristic function $\vartheta^s=\vartheta^s(u^s)$, such that
\begin{equation}\label{1-ThetaLimit}
    0 \leq 1-\vartheta^s \leq \chi_{\{u^s=0\}}\leq 1 \quad \text{ a.e. in }\Omega.
\end{equation}
where $\chi_{\{u^s=0\}}$ denotes the characteristic function of the coincidence set $I^s$ ($\chi_{\{u^s=0\}}=1$ if $x\in I^s$ and  $\chi_{\{u^s=0\}}=0$ otherwise). Then, we show that the assumption $f\neq 0$ a.e. in $\omega$, for any open subset of $\Omega$, implies $\vartheta^s \rightharpoonup\vartheta^\sigma$ in $L^\infty(\omega)$-weak$^*$  as $s\to\sigma$, for $0<\sigma\leq 1$.  Note that if $f\leq 0$ a.e. in $\Omega$ then the solution of the obstacle problem is trivial, i.e.  $u^s=0$ in $\Omega$. Using the additional property on the local vanishing of the Lebesgue measure of $\Phi^1\cap\omega$ in the limit problem, which holds if $f(x) \leq -\lambda <0, \text{ a.e. }x \in \omega\Subset\Omega$, we prove the convergence $\chi_{\{u^s=0\}} \to \chi_{\{u^1=0\}}$ in $L^r(\omega), \forall 1\leq r<\infty$, as $s\nearrow1$, since the limit solution $u^1$ satisfies
\begin{equation}-\Delta_pu^1=f-f^-\chi_{\{u^1=0\}}\quad\text{ a.e. in }\omega\Subset\Omega.\end{equation}

Finally, in Section 6, under a topological property of the coincidence set $I^1$ of the limit $p$-obstacle problem, essentially imposing that it coincides with the closure of its interior, similarly to the classical framework of Chapter 6 of \cite{ObstacleProblems}, we prove the local convergence in $\omega\Subset\Omega$ of $I^s\to I^1$ as $s\nearrow 1$ in the Hausdorff distance, under the assumption $f\neq 0$ a.e. in $\bar{\omega}\subset\Omega$. In addition, if this later nondegeneracy assumption is replaced by $f\leq -\lambda <0$ in $\omega\Subset\Omega$, using also the uniform convergence $u^s\to u^1$ in $\bar{\omega}$ we prove the local convergence of the free boundaries $\Phi^s\cap\bar{\omega}\to \Phi^1\cap\bar{\omega}$ still in the Hausdorff distance.

Although the techniques used here, by imposing nondegeneracy conditions on the coincidence set only in the limit problem, are similar to those of the classical local obstacle problem in \cite{ObstacleProblems}
and \cite{JFR2005j1}, these stability results of the coincidence sets and their free boundaries in the nonlocal framework, where their regularity is unknown, are completely new.

.

\section{Preliminaries on the Semilinear Dirichlet Problem}

We use the definition of the fractional Sobolev-Gagliardo spaces $W^{s,p}_0(\Omega)$ as the closure of  $C_c^\infty(\Omega)$ in
\begin{equation}\label{NormDef}
    W^{s,p}(\mathbb{R}^n)=\left\{u\in L^p(\mathbb{R}^n) :  [u]_{s,p}=\left((1-s)C_{n,p}\int_{\mathbb{R}^n}\int_{\mathbb{R}^n}\frac{|u(x)-u(y)|^p}{|x-y|^{n+sp}}dxdy\right)^\frac{1}{p}< \infty  \right\}.
\end{equation} 
We observe that, except for $s=1/p$, we have $W^{s,p}_0(\Omega)=\{u\in W^{s,p}(\mathbb{R}^n): u=0 \text{ a.e. in }\mathbb{R}^n\backslash\Omega\}$ when $\Omega$ is a bounded Lipschitz domain (see for instance \cite{EdmundsEvansBook}). 
When $p=2$, we have the fractional Hilbertian Sobolev space $H^s_0(\Omega)=W^{s,2}_0(\Omega)$, which has dual $H^{-s}(\Omega)=W^{-s,2}(\Omega)$.


We recall the Poincar\'e inequality for fractional Sobolev spaces $W^{s,p}_0(\Omega)$, which implies that the Gagliardo seminorm $[u]_{s,p}$, defined in \eqref{NormDef}, is a norm in $W^{s,p}_0(\Omega)$.
\begin{lemma}[Poincar\'e inequality, Theorem 6.1 of \cite{BonderSalort2019JFAFracOrliczSobolev}]\label{Poincare}
Let $0<s_0<s<1$ and $1<p<\infty$. For any open bounded set $\Omega\subset\mathbb{R}^n$, there exists a Poincar\'e constant $c_P>0$, depending only on $\Omega, s_0$ and $n$, such that \[\norm{u}_{L^p(\Omega)}^p\leq c_P[u]_{s,p}^p\] for every $s$, $s_0\leq s<1$, and for all $u\in W^{s,p}_0(\Omega)$. 
\end{lemma}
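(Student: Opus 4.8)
The plan is to recast the asserted inequality as a uniform lower bound on the first fractional Rayleigh quotient. Set
\[
\Lambda(s):=\inf\Big\{\,[v]_{s,p}^p\big/\|v\|_{L^p(\Omega)}^p\ :\ v\in W^{s,p}_0(\Omega),\ v\neq 0\,\Big\}.
\]
It suffices to prove that $\Lambda_0:=\inf_{s_0\le s<1}\Lambda(s)>0$, for then $c_P:=1/\Lambda_0$ works and depends only on $n,p,s_0$ and $\Omega$. By the definition \eqref{NormDef} of $W^{s,p}_0(\Omega)$ it is enough to run the estimates on $v\in C_c^\infty(\Omega)$, extended by zero to $\mathbb{R}^n$, so that $|v(x)-v(y)|^p=|v(x)|^p$ whenever $y\notin\Omega$.

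First I would record the elementary bound coming from the complement of $\Omega$. Fix $R>0$ with $\bar\Omega\subset B_R(0)$. For $x\in\Omega$ and $y\in B_{2R}(0)\setminus B_R(0)\subset\mathbb{R}^n\setminus\Omega$ one has $|x-y|<3R$, hence
\[
\int_{\mathbb{R}^n\setminus\Omega}\frac{dy}{|x-y|^{n+sp}}\ \geq\ \frac{\big|B_{2R}(0)\setminus B_R(0)\big|}{(3R)^{n+sp}}\ \geq\ c_0(n,p,\Omega)>0
\]
uniformly for $s\in(0,1)$. Integrating in $x\in\Omega$ and using $|v(x)-v(y)|^p=|v(x)|^p$ on $\Omega\times(\mathbb{R}^n\setminus\Omega)$ gives $c_0\|v\|_{L^p(\Omega)}^p\le\iint_{\mathbb{R}^n\times\mathbb{R}^n}\frac{|v(x)-v(y)|^p}{|x-y|^{n+sp}}\,dx\,dy=[v]_{s,p}^p\big/\big((1-s)C_{n,p}\big)$, that is,
\[
\Lambda(s)\ \geq\ c_0\,(1-s)\,C_{n,p}\qquad\text{for every }s\in(0,1).
\]
This already bounds $\Lambda$ below by a positive constant on each compact subinterval of $(0,1)$; in particular, any sequence $s_k$ with $\Lambda(s_k)\to 0$ must satisfy $s_k\nearrow 1$. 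Hence it only remains to exclude $\Lambda(s_k)\to 0$ along some $s_k\nearrow 1$.

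The crux is this limit $s\nearrow1$, where the bound above degenerates exactly because of the normalising factor $1-s$. Here I would argue by contradiction: suppose $s_k\nearrow1$ and $v_k\in C_c^\infty(\Omega)$ satisfy $\|v_k\|_{L^p(\Omega)}=1$ and $[v_k]_{s_k,p}^p\to 0$. Since $\{v_k\}$ is bounded in $L^p$, supported in the fixed compact set $\bar\Omega$, with $[v_k]_{s_k,p}$ bounded and $s_k\nearrow1$, the uniform fractional Rellich--Kondrachov compactness of Bourgain--Brezis--Mironescu type --- the ingredient underlying Theorem~6.1 of \cite{BonderSalort2019JFAFracOrliczSobolev} --- yields, along a subsequence, $v_k\to v$ in $L^p(\mathbb{R}^n)$ with $v=0$ a.e.\ outside $\Omega$ and $\|v\|_{L^p(\Omega)}=1$. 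The Bourgain--Brezis--Mironescu lower semicontinuity inequality then gives
\[
0\ =\ \liminf_{k\to\infty}[v_k]_{s_k,p}^p\ =\ \liminf_{k\to\infty}(1-s_k)C_{n,p}\iint\frac{|v_k(x)-v_k(y)|^p}{|x-y|^{n+s_kp}}\,dx\,dy\ \geq\ c_{n,p}\,\|\nabla v\|_{L^p(\mathbb{R}^n)}^p,
\]
for some constant $c_{n,p}>0$ (one may take $c_{n,p}=1$ with the normalisation \eqref{NormDef}, since $C_{n,p}^{-1}=\tfrac1p\int_{\mathbb{S}^{n-1}}\omega_n^p\,dS_\omega$ is precisely the Bourgain--Brezis--Mironescu constant), where the right-hand side is read as $+\infty$ when $v\notin W^{1,p}$. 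Hence $v\in W^{1,p}_0(\Omega)$ with $\nabla v\equiv 0$, so $v\equiv 0$ on $\Omega$, contradicting $\|v\|_{L^p(\Omega)}=1$. Therefore $\Lambda(s_k)\not\to0$, so $\Lambda_0>0$ and the inequality follows. I expect this last step to be the main obstacle: the elementary argument controls only the unnormalised Gagliardo energy, and restoring uniformity up to $s=1$ genuinely needs the compactness and lower semicontinuity input of the Bourgain--Brezis--Mironescu theory (equivalently, one can invoke a fractional Hardy inequality on $\Omega$ whose constant, with the $(1-s)$ normalisation, stays bounded as $s\nearrow1$).
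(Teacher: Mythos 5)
Your argument is correct, but note that the paper itself gives no proof of this lemma: it is imported verbatim as Theorem 6.1 of the cited Bonder--Salort paper, so there is no internal proof to compare against. Your route is a legitimate self-contained alternative: the reduction to a uniform lower bound on the Rayleigh quotient $\Lambda(s)$, the elementary tail estimate using $y\in B_{2R}\setminus B_R\subset\mathbb{R}^n\setminus\Omega$ (which correctly exploits that the Gagliardo seminorm here is taken over $\mathbb{R}^n\times\mathbb{R}^n$ with $u=0$ outside $\Omega$, so no regularity of $\partial\Omega$ is needed), and the observation that this bound alone degenerates like $(1-s)$ are all sound. The genuinely delicate part is, as you say, the regime $s_k\nearrow 1$, and there your proof leans on two nontrivial external inputs: the Bourgain--Brezis--Mironescu-type compactness theorem for sequences with $s_k\to1$ and bounded normalised energies, and the $\Gamma$-liminf inequality $\liminf_k [v_k]_{s_k,p}^p\geq \|\nabla v\|_{L^p}^p$ for $v_k\to v$ in $L^p$ (due to Ponce in this generality, with varying functions, rather than to the original BBM statement for a fixed function); both are consistent with the framework the paper itself invokes when discussing the $\Gamma$-convergence of $\mathcal{J}_{s,p}$, and since every $v_k$ vanishes outside $\Omega$ you can run both results on a large ball $B_{2R}\supset\bar\Omega$, so the connectedness argument forcing the limit $v$ to be the zero constant is valid. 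Two cosmetic points: the sequence $s_k$ produced by your reduction need only satisfy $s_k\to1$, not monotonically, which changes nothing; and your constant $c_P$ necessarily also depends on $p$, which is in fact implicit (via $C_{n,p}$) in the statement as quoted. The trade-off versus the cited reference is the usual one: you get no explicit constant, but a short compactness proof whose only inputs are results already underlying the paper's Section 4.
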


We also use the fractional Sobolev embedding type theorems in the following form (see for instance \cite{HitchhikerGuide}).
\begin{lemma}\label{Sobolev} Let $0<s<1$. For any open bounded set $\Omega\subset\mathbb{R}^n$, for $u\in W^{s,p}_0(\Omega)$, there exists a Sobolev-Poincar\'e constant $C_{SP}>0$ such that 
    \[\norm{u}_{L^{p^*_s}(\Omega)}\leq C_{SP}[u]_{s,p},\] where $p^*_s>p$ is the fractional Sobolev exponent such that $\frac{1}{p^*_s}=\frac{1}{p}-\frac{s}{n}$ if $sp<n$, any $p^*_s<\infty$ if $sp=n$ and $p^*_s=\infty$ if $sp>n$. Moreover the embedding $W^{s,p}_0(\Omega)\subset L^q(\Omega)$ is also compact for $1\leq q<p^*_s$.
\end{lemma}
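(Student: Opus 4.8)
The plan is to reduce the inequality to the whole space $\mathbb{R}^n$ and then handle the three ranges $sp<n$, $sp=n$, $sp>n$ separately, the compactness assertion being deduced afterwards from the $sp<n$ inequality via a Fréchet--Kolmogorov argument. For the reduction: by the definition of $W^{s,p}_0(\Omega)$ as the closure of $C_c^\infty(\Omega)$ in $W^{s,p}(\mathbb{R}^n)$, every $u\in W^{s,p}_0(\Omega)$ is an element of $W^{s,p}(\mathbb{R}^n)$ vanishing a.e.\ outside $\bar\Omega$ with $[u]_{s,p}$ equal to its full Gagliardo seminorm over $\mathbb{R}^n$; hence it suffices to prove $\norm{u}_{L^{p^*_s}(\mathbb{R}^n)}\leq C[u]_{s,p}$ for such $u$, and by density one may assume $u\in C_c^\infty(\Omega)$.

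The case $sp<n$ (so $p^*_s=np/(n-sp)$) is the \emph{crux}: one must establish the fractional Gagliardo--Nirenberg--Sobolev inequality
\[
\norm{u}_{L^{p^*_s}(\mathbb{R}^n)}^p\leq C(n,p,s)\int_{\mathbb{R}^n}\int_{\mathbb{R}^n}\frac{|u(x)-u(y)|^p}{|x-y|^{n+sp}}\,dx\,dy .
\]
I would prove this by the annular decomposition argument: for a.e.\ $x$ and every $\rho>0$ bound $|u(x)|$ by the mean over $B_\rho(x)$ of $|u(x)-u(\cdot)|$ plus the mean over $B_\rho(x)$ of $|u|$; dominate the first mean by a truncated Riesz potential of the Gagliardo difference quotient; optimise in $\rho$ to obtain a weak-type $(p,p^*_s)$ estimate; and then upgrade it to the strong estimate by applying it to the functions obtained from $u$ by truncation at dyadic heights and summing a geometric series.

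For $sp=n$, fix any finite $q$ and choose $t\in(0,s)$ with $tp<n$ and $p^*_t=np/(n-tp)\geq q$, which is possible since $p^*_t\to\infty$ as $t\nearrow s=n/p$. Splitting the double integral defining $[u]_{t,p}$ into its near- and far-diagonal parts and using that $u$ is supported in the bounded set $\Omega$ gives $[u]_{t,p}\leq C(n,p,s,t)\big([u]_{s,p}+\norm{u}_{L^p(\Omega)}\big)$, while $\norm{u}_{L^p(\Omega)}\leq c_P^{1/p}[u]_{s,p}$ by Lemma~\ref{Poincare}; combining this with the case $sp<n$ at the exponent $t$ and with $\norm{u}_{L^q(\Omega)}\leq|\Omega|^{1/q-1/p^*_t}\norm{u}_{L^{p^*_t}(\Omega)}$ yields the claim. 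For $sp>n$, the fractional Morrey estimate $|u(x)-u(y)|\leq C(n,p,s)\,|x-y|^{\,s-n/p}[u]_{s,p}$ together with the vanishing of $u$ outside the bounded set $\Omega$ gives directly $\norm{u}_{L^\infty(\Omega)}\leq C[u]_{s,p}$, i.e.\ $p^*_s=\infty$.

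For the compactness, let $(u_k)$ be bounded in $W^{s,p}_0(\Omega)$. The standard increment estimate $\norm{u_k(\cdot+h)-u_k}_{L^p(\mathbb{R}^n)}\leq C(n,p,s)\,|h|^{s}[u_k]_{s,p}$, the uniform $L^p$-bound from Lemma~\ref{Poincare}, and the fact that all $u_k$ are supported in the fixed bounded set $\bar\Omega$ allow one to invoke the Riesz--Fréchet--Kolmogorov criterion and conclude that $(u_k)$ is precompact in $L^p(\Omega)$. Since $(u_k)$ is also bounded in $L^{p^*_s}(\Omega)$ by the first part (with $p^*_s=\infty$ understood when $sp\geq n$), for $p\leq q<p^*_s$ the interpolation inequality $\norm{v}_{L^q}\leq\norm{v}_{L^p}^{\theta}\norm{v}_{L^{p^*_s}}^{1-\theta}$ (with $\theta\in(0,1]$) upgrades precompactness in $L^p$ to precompactness in $L^q$, and for $1\leq q<p$ it follows at once from $\norm{v}_{L^q(\Omega)}\leq|\Omega|^{1/q-1/p}\norm{v}_{L^p(\Omega)}$. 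The main obstacle is the $sp<n$ inequality above --- specifically the passage from the weak-type to the strong bound and the control of the $s$-dependence of the constant; granting it, the remaining ranges and the compactness are routine, resting only on Hölder's inequality, interpolation, the Morrey estimate, Fréchet--Kolmogorov, and the Poincaré inequality of Lemma~\ref{Poincare}.
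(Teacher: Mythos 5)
Your proposal is essentially correct, but note that the paper does not prove this lemma at all: it is quoted as a standard fractional Sobolev embedding with a citation to the literature (the ``hitchhiker's guide''), so there is no in-paper argument to match. What you write is a faithful reconstruction of the standard proofs: the reduction to compactly supported functions on $\mathbb{R}^n$, the Gagliardo--Nirenberg--Sobolev inequality for $sp<n$, the lowering of the exponent $t<s$ plus the Poincar\'e inequality for $sp=n$, the Morrey/Campanato oscillation estimate combined with vanishing outside $\Omega$ for $sp>n$, and Riesz--Fr\'echet--Kolmogorov plus interpolation for compactness. The one genuinely different choice is in the crux case $sp<n$, where you go through a truncated Riesz potential, a weak-type $(p,p^*_s)$ bound and a dyadic truncation to upgrade to the strong inequality; the cited reference instead gives an elementary proof by a level-set/cube decomposition that never passes through weak-type estimates. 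Both routes are valid; yours is the classical potential-theoretic one, theirs is more self-contained. Two small remarks: (i) since the paper's seminorm $[u]_{s,p}$ already carries the factor $(1-s)C_{n,p}$, translating your inequality (stated with the bare double integral) into the lemma's form makes $C_{SP}$ depend on $s$; this is consistent with the lemma as stated (where $s$ is fixed), and the paper itself only claims $s$-uniformity for the Poincar\'e constant $c_P$, not for $C_{SP}$ (cf.\ Remark 3.2), so your worry about the $s$-dependence of the constant is not an obstruction here; (ii) in the $sp>n$ case it is worth saying explicitly that the H\"older seminorm bound $|u(x)-u(y)|\leq C|x-y|^{s-n/p}[u]_{s,p}$ requires only the Gagliardo seminorm (a Campanato-type estimate), which is what lets you dispense with the $L^p$ norm on the right-hand side; granting that, your argument is complete.
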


\begin{remark}
    For $v\in W^{s,p}_0(\Omega)\subset L^{p^*_s}(\Omega)$, if $u\in W^{s,p}_0(\Omega)$ is such that $-\Delta_p^su\in L^{p^\natural_s}(\Omega)\subset W^{-s,p'}(\Omega)$, one can write 
    \[\langle -\Delta_p^su,v\rangle =  \int_\Omega(-\Delta_p^su)v\, dx,\]
    where $p^\natural_s=(p^*_s)'=p^*_s/(p^*_s-1)$ is the conjugate fractional Sobolev exponent, such that $\frac{1}{p^\natural_s}=1-\frac{1}{p}+\frac{s}{n}$ if $sp<n$, any $p^\natural_s>1$ if $sp=n$ and $p^\natural_s\geq 1$ if if $sp>n$.
\end{remark}

In order to recall some of the main properties of the fractional $p$-Laplacian, in particular, estimates and comparison properties for the solutions of the Dirichlet problem,  we use the standard notation for the positive and negative parts of $v$, $v^+\equiv v\vee0$ and $v^-\equiv-v\vee0=-(v\wedge0)$, and we recall the Jordan decomposition of $v$ given by $v=v^+-v^-$ and $|v|\equiv v\vee(-v)=v^++v^-$. The use of classical inequalities with sharp constants, used in the local $p$-Laplacian in \cite{SimonPLap}, can also give interesting results in the nonlocal range $0<s<1$.

\begin{proposition}\label{LgCoercive}
The operator $-\Delta_p^s$ is strongly coercive, in the sense that 
\begin{equation}\label{LgCoerciveEq}\langle -\Delta_p^su- (-\Delta_p^s)v,u-v\rangle
\geq 
\begin{cases} 2^{1-p}[u-v]_{s,p}^p&\text{ if }p\geq2,\\
(p-1)2^{\frac{p^2-4p+2}{p}}\dfrac{[u-v]_{s,p}^2}{\left([u]_{s,p}+[v]_{s,p}\right)^{2-p}}&\text{ if }1<p<2.\end{cases}\end{equation}
Moreover it is also strictly T-monotone in $W^{s,p}_0(\Omega)$, i.e. \begin{equation}\label{LgTMonotoneEq}
    \langle-\Delta_p^su-(-\Delta_p^s)v,(u-v)^+\rangle>0\quad\forall u,v\in W^{s,p}_0(\Omega):(u-v)^+\neq 0.\end{equation}

\end{proposition}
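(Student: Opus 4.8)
The plan is to reduce both claims to pointwise scalar inequalities for the strictly increasing map $g(t):=|t|^{p-2}t$ and then integrate against the singular kernel. For $u,v\in W^{s,p}_0(\Omega)$ write $U:=u(x)-u(y)$, $V:=v(x)-v(y)$, $W:=u-v$ and $d\mu:=|x-y|^{-n-sp}\,dx\,dy$; since the duality bracket is linear in its second argument, \eqref{LpLap} gives
\[
\langle-\Delta_p^su-(-\Delta_p^s)v,\,u-v\rangle=\kappa_s\iint_{\mathbb{R}^n\times\mathbb{R}^n}\big(g(U)-g(V)\big)\big(W(x)-W(y)\big)\,d\mu,
\]
and note $U-V=W(x)-W(y)$. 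For $p\ge 2$ I would invoke the classical sharp inequality (cf. \cite{SimonPLap}) $\big(g(a)-g(b)\big)(a-b)\ge 2^{2-p}|a-b|^p$ for all $a,b\in\mathbb{R}$, apply it with $a=U$, $b=V$, integrate, and use $\kappa_s=(1-s)C_{n,p}/2$ together with $[W]_{s,p}^p=(1-s)C_{n,p}\iint|W(x)-W(y)|^p\,d\mu$ from \eqref{NormDef}; the lower bound is then exactly $2^{1-p}[u-v]_{s,p}^p$.

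For $1<p<2$ I would use instead $\big(g(a)-g(b)\big)(a-b)\ge(p-1)\,|a-b|^2\big(|a|+|b|\big)^{p-2}$ (with the convention that the right side is $0$ when $a=b=0$), which gives $\langle\cdots\rangle\ge\kappa_s(p-1)\iint|W(x)-W(y)|^2(|U|+|V|)^{p-2}\,d\mu$. To turn this into $[u-v]_{s,p}^2$ I would apply Hölder's inequality on $(\mathbb{R}^n\times\mathbb{R}^n,d\mu)$ with exponents $2/p$ and $2/(2-p)$ to the splitting $|W(x)-W(y)|^p=\big(|W(x)-W(y)|^2(|U|+|V|)^{p-2}\big)^{p/2}\big((|U|+|V|)^p\big)^{(2-p)/2}$, which yields
\[
\iint\frac{|W(x)-W(y)|^2}{(|U|+|V|)^{2-p}}\,d\mu\;\ge\;\frac{\big(\iint|W(x)-W(y)|^p\,d\mu\big)^{2/p}}{\big(\iint(|U|+|V|)^p\,d\mu\big)^{(2-p)/p}}.
\]
Bounding $(|U|+|V|)^p\le 2^{p-1}(|U|^p+|V|^p)$ and using $[u]_{s,p}^p+[v]_{s,p}^p\le([u]_{s,p}+[v]_{s,p})^p$ one gets $\iint(|U|+|V|)^p\,d\mu\le 2^{p-1}((1-s)C_{n,p})^{-1}([u]_{s,p}+[v]_{s,p})^p$, while $\iint|W(x)-W(y)|^p\,d\mu=((1-s)C_{n,p})^{-1}[u-v]_{s,p}^p$. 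Substituting these and multiplying by $\kappa_s(p-1)$, every power of $(1-s)C_{n,p}$ cancels and the remaining power of $2$ is $-1-(p-1)(2-p)/p=(p^2-4p+2)/p$, giving precisely the stated constant $(p-1)2^{(p^2-4p+2)/p}$. (If $[u]_{s,p}+[v]_{s,p}=0$, or all of these integrals vanish, then $u=v=0$ and both sides are $0$.)

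For the strict $T$-monotonicity I would take $\varphi:=(u-v)^+=W^+\in W^{s,p}_0(\Omega)$ and expand $\langle-\Delta_p^su-(-\Delta_p^s)v,\varphi\rangle=\kappa_s\iint(g(U)-g(V))(\varphi(x)-\varphi(y))\,d\mu$. Since $g$ is strictly increasing and $t\mapsto t^+$ is nondecreasing, for every $(x,y)$ the sign of $g(U)-g(V)$ equals that of $U-V=W(x)-W(y)$, while the sign of $\varphi(x)-\varphi(y)=W(x)^+-W(y)^+$ is either $0$ or equal to that of $W(x)-W(y)$; hence the integrand is $\ge 0$ everywhere and is $>0$ exactly on the set $\{\varphi(x)\neq\varphi(y)\}$. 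If $\varphi\not\equiv 0$ this set has positive measure, for otherwise Fubini would force $\varphi$ to equal a constant a.e., which is impossible because $\varphi=0$ on the positive-measure set $\mathbb{R}^n\setminus\Omega$ while $\varphi\not\equiv 0$. Thus the integral is strictly positive. The main obstacle is the constant chasing in the range $1<p<2$: selecting the Hölder exponents, controlling $\iint(|U|+|V|)^p\,d\mu$ by $([u]_{s,p}+[v]_{s,p})^p$, and verifying that the powers of $(1-s)C_{n,p}$ and of $2$ collapse to exactly $(p-1)2^{(p^2-4p+2)/p}$; one must also discard by convention the null set where $|U|+|V|=0$. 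The case $p\ge 2$ and the $T$-monotonicity are then immediate once the two pointwise inequalities are in hand.
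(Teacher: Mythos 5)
Your proposal is correct and, for the coercivity estimate, follows essentially the same route as the paper: the pointwise inequality $(|a|^{p-2}a-|b|^{p-2}b)(a-b)\geq 2^{2-p}|a-b|^p$ (resp. $(p-1)|a-b|^2(|a|+|b|)^{p-2}$) with $a=u(x)-u(y)$, $b=v(x)-v(y)$, followed for $1<p<2$ by the bound $(|a|+|b|)^p\leq 2^{p-1}(|a|^p+|b|^p)$ and the Hölder step with exponents $2/p$ and $2/(2-p)$, which is exactly the paper's ``reverse Hölder'' inequality in rearranged form; your constant chase reproduces $(p-1)2^{(p^2-4p+2)/p}$ correctly. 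The only divergence is that for strict T-monotonicity the paper simply cites the literature, whereas you give a correct self-contained sign argument (monotonicity of $t\mapsto|t|^{p-2}t$ and of $t\mapsto t^+$, plus the fact that $(u-v)^+$ cannot be a.e.\ constant and nonzero since it vanishes outside $\Omega$), which is a perfectly valid, slightly more complete, substitute.
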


\begin{proof} For T-monotonicity see, for instance, Remark 3.3 and pages 261--262 of \cite{GMosconi} or Equation (2.13) of \cite{IannizzottoMosconiSquassina2020JFA_FracPLapTMonotoneLewyStampacchia}.

For the strong coercivity, we have 
    \begin{align*}&\,\langle -\Delta_p^su- (-\Delta_p^s)v,u-v\rangle
\\&=\kappa_s\int_{\mathbb{R}^d}\int_{\mathbb{R}^d}\frac{\left[|u(x)-u(y)|^{p-2}(u(x)-u(y)) - |v(x)-v(y)|^{p-2}(v(x)-v(y)) \right] }{|x-y|^{n+sp}} 
\\ &\quad\quad\quad\quad\quad\quad \times \left[((u(x)-v(x))-(u(y)-v(y)))\right]\,dx\,dy\\&\geq 
\begin{cases} 2^{1-p}[u-v]_{s,p}^p&\text{ if }p\geq2,\\
(p-1)2^{\frac{p^2-4p+2}{p}}\dfrac{[u-v]_{s,p}^2}{\left([u]_{s,p}+[v]_{s,p}\right)^{2-p}}&\text{ if }1<p<2
\end{cases}\end{align*} by the definition of the Gagliardo seminorm $[u]_{s,p}$, where $(1-s)C_{n,p}=2\kappa_s$, and the well-known inequality (see Lemma 1.11 of \cite{EdmundsEvansBook}), with $a=u(x)-u(y)$ and $b=v(x)-v(y)$
\begin{equation}\label{pLapIneq}
(|a|^{p-2}a-|b|^{p-2}b)(a-b)\geq 
\begin{cases} 2^{2-p}|a-b|^p&\text{ if }p\geq2,\\
(p-1)\frac{|a-b|^2}{(|a|+|b|)^{2-p}}&\text{ if }1<p<2
\end{cases} 
\quad\forall a,b\in\mathbb{R}.\end{equation} In the second case $1<p<2$, we have also used the well-known inequality 
\[(a+b)^p \leq 2^{(p-1)^+}(a^p+b^p)\quad \forall a,b,p>0\] 
and the reverse H\"older inequality with $0<p/2<1$ and $p/(p-2)<0$, in the condensated form
\begin{multline*}
    (1-s) C_{n,p}\int_{\mathbb{R}^d}\int_{\mathbb{R}^d}\frac{|a-b|^2}{(|a|+|b|)^{2-p}}\,\frac{dx\,dy}{|x-y|^{n+sp}} \\\geq \left[(1-s) C_{n,p}\int_{\mathbb{R}^d}\int_{\mathbb{R}^d}|a-b|^p\,\frac{dx\,dy}{|x-y|^{n+sp}}\right]^{2/p}\left[(1-s) C_{n,p}\int_{\mathbb{R}^d}\int_{\mathbb{R}^d}(|a|+|b|)^{p}\,\frac{dx\,dy}{|x-y|^{n+sp}}\right]^{1-2/p}.
\end{multline*}

\end{proof}

With these properties and a monotonicity assumption, we can complement and improve an existence result on the semilinear Dirichlet problem for the nonlocal $p$-Laplacian. Let $F:\Omega\times\mathbb{R}\to\mathbb{R}$ be a Carath\'eodory function, monotone non-increasing in $z$, satisfying the growth condition 
\begin{equation}\label{FGrowth}|F(x,z)|\leq f(x)+C|z|^{p^\natural_s}\end{equation} for some constant $C\geq0$ and a function $f\in L^{p^\natural_s}(\Omega)$ with $p^\natural_s$ being the conjugate $s$-fractional Sobolev exponent, given in Remark 2.3. 
For $0<s<1$, we consider solutions  $u^s\in W^{s,p}_0(\Omega)$  of the semilinear Dirichlet problem
\begin{equation}\label{DirichletExistEq}
-\Delta_p^su=F(x,u)\quad\text{ in }\Omega,\quad\quad u=0\quad\text{ in }\mathbb{R}^n\backslash\Omega.
\end{equation}

\begin{theorem}\label{DirichletExistThm}

Let $F$ be a monotonically non-increasing in $z$ Carathéodory function satisfying \eqref{FGrowth}. Then there exists a unique solution to the semilinear Dirichlet problem \eqref{DirichletExistEq}. Let  $u_1,u_2$ be solutions corresponding to $F_1,F_2$. Then the following estimate holds
\[[u_1-u_2]_{s,p}\leq  \begin{cases} 2C_{SP}^\frac{1}{p-1}\norm{F_1(x,0)-F_2(x,0)}^\frac{1}{p-1}_{L^{p^\natural_s}(\Omega)}&\text{ if }p\geq2,\\
C_p(F_1,F_2)\norm{F_1(x,0)-F_2(x,0)}_{L^{p^\natural_s}(\Omega)}&\text{ if }1<p<2,\end{cases}\] where $C_p(F_1,F_2)=(p-1)^{-1}2^{\frac{2}{p}}C_{SP}^\frac{3-2p}{1-p}\left(\norm{F_1(x,0)}^\frac{1}{1-p}_{L^{p^\natural_s}(\Omega)}+\norm{F_2(x,0)}^\frac{1}{1-p}_{L^{p^\natural_s}(\Omega)}\right)^{2-p}$, i.e. the solution map   $F\mapsto u$ is $\frac{1}{p-1}$-H\"older continuous when $p\geq2$, and locally Lipschitz continuous when $1<p<2$.
\end{theorem}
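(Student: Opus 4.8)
The plan is to get existence and uniqueness from the Browder--Minty surjectivity theorem applied to the operator $\mathcal A u:=-\Delta_p^su-F(\cdot,u)$ on the reflexive space $W^{s,p}_0(\Omega)$, and then to extract the quantitative estimate by testing the difference of the two equations with $u_1-u_2$ and playing the strong coercivity \eqref{LgCoerciveEq} of $-\Delta_p^s$ against the Sobolev--Poincar\'e inequality of Lemma~\ref{Sobolev} applied to the right-hand side.

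For the first part I would verify the Browder--Minty hypotheses for $\mathcal A$. The growth condition \eqref{FGrowth} together with the embedding $W^{s,p}_0(\Omega)\hookrightarrow L^{p^*_s}(\Omega)$ of Lemma~\ref{Sobolev} makes the Nemytskii map $u\mapsto F(\cdot,u)$ bounded and continuous from $W^{s,p}_0(\Omega)$ into $L^{p^\natural_s}(\Omega)\subset W^{-s,p'}(\Omega)$, so with the standard boundedness and continuity of $-\Delta_p^s$ the operator $\mathcal A$ is bounded and hemicontinuous. Monotonicity of $\mathcal A$ follows from the monotonicity of $-\Delta_p^s$ contained in \eqref{LgCoerciveEq} together with $(F(x,z_1)-F(x,z_2))(z_1-z_2)\le 0$, which holds since $F$ is non-increasing in $z$; and \eqref{LgCoerciveEq} in fact yields \emph{strict} monotonicity of $-\Delta_p^s$, hence of $\mathcal A$. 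Coercivity I would get by testing with $u$: using $\langle-\Delta_p^su,u\rangle=\tfrac12[u]_{s,p}^p$, the pointwise bound $(F(x,u)-F(x,0))u\le0$, H\"older and Lemma~\ref{Sobolev},
\[\langle\mathcal A u,u\rangle\ \ge\ \tfrac12[u]_{s,p}^p-\int_\Omega F(x,0)\,u\,dx\ \ge\ \tfrac12[u]_{s,p}^p-C_{SP}\,\|F(\cdot,0)\|_{L^{p^\natural_s}(\Omega)}\,[u]_{s,p},\]
whose quotient by $[u]_{s,p}$ tends to $+\infty$ since $p>1$. Then $\mathcal A$ is surjective onto $W^{-s,p'}(\Omega)$, giving a solution, and strict monotonicity gives uniqueness: subtracting the equations for two solutions with the same $F$ and pairing with their difference, the left-hand side is $\ge0$ by \eqref{LgCoerciveEq} while the right-hand side is $\le 0$, so $[u_1-u_2]_{s,p}=0$.

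For the estimate, I would first record the a priori bound obtained by testing $-\Delta_p^su_i=F_i(\cdot,u_i)$ with $u_i$ exactly as above, namely $[u_i]_{s,p}\le\big(2C_{SP}\|F_i(\cdot,0)\|_{L^{p^\natural_s}(\Omega)}\big)^{1/(p-1)}$. Subtracting the equations for $u_1$ and $u_2$ and pairing with $u_1-u_2$,
\[\langle-\Delta_p^su_1-(-\Delta_p^s)u_2,\,u_1-u_2\rangle=\int_\Omega\big(F_1(x,u_1)-F_2(x,u_2)\big)(u_1-u_2)\,dx,\]
and I would split $F_1(x,u_1)-F_2(x,u_2)=\big(F_1(x,u_1)-F_1(x,u_2)\big)+\big(F_1(x,u_2)-F_2(x,u_2)\big)$. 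The first term integrates to something $\le 0$ by monotonicity of $F_1$ in $z$, while $F_1-F_2$ is independent of $z$ so $F_1(x,u_2)-F_2(x,u_2)=F_1(x,0)-F_2(x,0)$; hence H\"older and Lemma~\ref{Sobolev} bound the whole right-hand side by $C_{SP}\|F_1(\cdot,0)-F_2(\cdot,0)\|_{L^{p^\natural_s}(\Omega)}[u_1-u_2]_{s,p}$. Inserting the lower bound \eqref{LgCoerciveEq}: for $p\ge2$ this becomes $2^{1-p}[u_1-u_2]_{s,p}^p\le C_{SP}\|F_1(\cdot,0)-F_2(\cdot,0)\|_{L^{p^\natural_s}(\Omega)}[u_1-u_2]_{s,p}$, and taking $(p-1)$-th roots gives precisely the claimed $\tfrac1{p-1}$-H\"older bound; for $1<p<2$ it becomes
\[(p-1)2^{\frac{p^2-4p+2}{p}}\,\frac{[u_1-u_2]_{s,p}^2}{\big([u_1]_{s,p}+[u_2]_{s,p}\big)^{2-p}}\ \le\ C_{SP}\,\|F_1(\cdot,0)-F_2(\cdot,0)\|_{L^{p^\natural_s}(\Omega)}\,[u_1-u_2]_{s,p},\]
and substituting the a priori bounds on $[u_1]_{s,p}$ and $[u_2]_{s,p}$ isolates $[u_1-u_2]_{s,p}$ and produces the constant $C_p(F_1,F_2)$.

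The existence and uniqueness I expect to be routine monotone-operator theory. The one genuinely delicate point is the estimate in the range $1<p<2$: there the coercivity of $-\Delta_p^s$ is available only in the degenerate form of \eqref{LgCoerciveEq}, so the factor $\big([u_1]_{s,p}+[u_2]_{s,p}\big)^{2-p}$ must be absorbed through the a priori bound, which is exactly why the dependence $F\mapsto u$ is only \emph{locally} Lipschitz there, with the constant $C_p(F_1,F_2)$ depending on the sizes $\|F_i(\cdot,0)\|_{L^{p^\natural_s}(\Omega)}$, whereas for $p\ge2$ the argument is clean and delivers the $\tfrac1{p-1}$-H\"older modulus directly.
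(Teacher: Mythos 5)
Your proposal is correct and follows essentially the same route as the paper: existence and uniqueness from strict monotonicity (the paper invokes the general theory that you spell out via Browder--Minty), the estimate by testing the difference of the two equations with $u_1-u_2$ and combining \eqref{pLapIneq}/\eqref{LgCoerciveEq} with H\"older and Lemma~\ref{Sobolev}, and, for $1<p<2$, the a priori bound $[u_i]_{s,p}\le\bigl(2C_{SP}\|F_i(\cdot,0)\|_{L^{p^\natural_s}(\Omega)}\bigr)^{1/(p-1)}$ obtained by testing with $u_i$ to absorb the degenerate factor $\bigl([u_1]_{s,p}+[u_2]_{s,p}\bigr)^{2-p}$. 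Your explicit reduction of the cross term to $F_1(x,0)-F_2(x,0)$, i.e.\ reading the theorem as comparing data whose difference is independent of $z$, is precisely the interpretation that the paper's stated estimate (and its own terse proof) implicitly requires.
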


\begin{proof}
    The existence and uniqueness follow immediately by the general theory, since $(-\Delta_p^s-F)(\cdot)$  is easily seen to be a strictly monotone operator in $W^{s,p}_0(\Omega)$. Writing, for $i=1,2$, 
    \begin{equation}\label{DirichletVarEq}\int_\Omega(-\Delta_p^su_i - F_i(x,u_i))v\, dx=0\end{equation}
    the strong continuity estimates follow easily by using the test function $v=u_2-u_1$ by applying the inequalities \eqref{pLapIneq} and the monotonicity of $F_i$. For the case of $1<p<2$, we complete the proof by applying the H\"older inequality and the monotonicity of $F$ in order to obtain, by letting $v=u_i$ in \eqref{DirichletVarEq}
    \[[u_i]_{s,p}\leq \left(2C_{SP}\norm{F_i(x,0)}_{L^{p^\natural_s}(\Omega)}\right)^\frac{1}{p-1}.\]
\end{proof}



    

We define the functionals $\mathcal{J}_{s,p}:W^{s,p}_0(\Omega)\to \mathbb{R}$ for $0<s\leq1$, as in \cite{BonderSalort2020StabilityNonlocal}, with $W^{1,p}_0(\Omega)$ being the usual Sobolev space, by 
\begin{equation}\label{JFuncDef}
\mathcal{J}_{s,p}(u):=
\begin{cases}
    \frac{1}{p}[u]_{s,p}^p & \quad \text{ for }0<s<1,\\
    \frac{1}{p} \norm{\nabla u}_{L^p(\Omega)}^p & \quad \text{ for }s=1.
\end{cases}
\end{equation} Then, in terms of these functionals, when $s\nearrow1$ the result of \cite{BourgainBrezisMironescu}, $[u]_{s,p}^p \to \norm{\nabla u}_{L^p(\Omega)}^p$ for $u\in W^{1,p}_0(\Omega)\subset W^{s,p}_0(\Omega)$, can be interpreted in the sense of $\Gamma$-convergence as in \cite{BonderSalort2019JFAFracOrliczSobolev} and \cite{BonderSalort2020StabilityNonlocal}. This means that, extending the functionals $\mathcal{J}_{s,p}$ by $+\infty$ to $L^p(\Omega)\backslash W^{s,p}_0(\Omega)$, for any sequence $0<s_k\nearrow1$, the sequence $\{\mathcal{J}_{s_k,p}\}_{k\in\mathbb{N}}$ $\Gamma$-converges to $\mathcal{J}_{1,p}$ in $L^p(\Omega)$.

The Fr\'echet derivative $\mathcal{J}_{s,p}':W^{s,p}_0(\Omega) \to W^{-s,p'}(\Omega)$ of $\mathcal{J}_{s,p}$ is given by 
\begin{equation}
    \langle \mathcal{J}_{s,p}'(u),v\rangle = 
    \begin{dcases}
        \frac{(1-s)C_{n,p}}{2} \iint_{\mathbb{R}^n\times\mathbb{R}^n}\frac{|u(x)-u(y)|^{p-2}(u(x)-u(y))(v(x)-v(y))}{|x-y|^{n+sp}}\,dx\,dy &\text{ for }0<s<1,\\
        \int_\Omega|\nabla u|^{p-2}\nabla u \cdot \nabla v \, dx  &\text{ for }s=1.
    \end{dcases}
\end{equation}
In terms of the fractional Laplacians and of their dualities in $W^{s,p}_0(\Omega)$, for $0<s<1$ and $s=1$, respectively, we observe that $\mathcal{J}_{s,p}'$ may be given by 
\begin{equation}
    \langle \mathcal{J}_{s,p}'(u),v\rangle = 
    \begin{dcases}
        \langle-\Delta_p^su, v\rangle &\text{ for any }u,v\in W^{s,p}_0(\Omega)\text{ for }0<s<1,\\
        \langle-\Delta_pu, v\rangle &\text{ for any }u,v\in W^{1,p}_0(\Omega)\text{ for }s=1
    \end{dcases}
\end{equation}

 We shall use and extend later the following uniform asymptotic expansion of Lemma 2.7 of \cite{BonderSalort2020StabilityNonlocal}:
\begin{lemma}\label{BonderSalortConvgLemma}
    For $u\in W^{1,p}_0(\Omega)$ fixed, let $v_s\in W^{s,p}_0(\Omega)$ for any $0<s\leq1$. Then, \[\mathcal{J}_{s,p}(u+\tau v_s)-\mathcal{J}_{s,p}(u)=\tau \langle -\Delta_p^su,v_s\rangle+o(\tau ) \quad \text{ for all }\tau >0,\] where $o(\tau )$ depends only on $C\geq [v_s]_{s,p}$.
\end{lemma}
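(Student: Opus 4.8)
The plan is to use that, for every $1<p<\infty$ and $0<s\le1$, the energy $\mathcal{J}_{s,p}$ is convex and Fr\'echet differentiable with $\mathcal{J}_{s,p}'=-\Delta_p^s$ (read as $-\Delta_p$ when $s=1$), as recalled above. Fixing $s$ and $v_s$, consider the scalar function $g(\tau):=\mathcal{J}_{s,p}(u+\tau v_s)$; it is convex and differentiable on $\mathbb{R}$ with $g'(\tau)=\langle -\Delta_p^s(u+\tau v_s),v_s\rangle$. Applying the tangent-line inequality for a convex differentiable function at the points $0$ and $\tau$ gives, for $\tau>0$,
\[
\tau\,g'(0)\ \le\ g(\tau)-g(0)\ \le\ \tau\,g'(\tau),
\]
that is,
\[
0\ \le\ \mathcal{J}_{s,p}(u+\tau v_s)-\mathcal{J}_{s,p}(u)-\tau\langle -\Delta_p^su,v_s\rangle\ \le\ \tau\,R_s(\tau),\qquad R_s(\tau):=\big\langle -\Delta_p^s(u+\tau v_s)-(-\Delta_p^su),\,v_s\big\rangle\ \ge\ 0
\]
(the sign of $R_s$ following from the monotonicity of $-\Delta_p^s$ in Proposition~\ref{LgCoercive}, or equivalently from convexity of $g$). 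Equivalently, one may integrate the scalar inequalities $0\le\psi(a+h)-\psi(a)-\psi'(a)h\le(\psi'(a+h)-\psi'(a))h$, valid for $\psi(t)=\tfrac{1}{p}|t|^p$ and all $a,h\in\mathbb{R}$, against the kernel $\kappa_s|x-y|^{-n-sp}$ with $a=u(x)-u(y)$, $h=\tau(v_s(x)-v_s(y))$ (resp.\ against $dx$ with $a=\nabla u$, $h=\tau\nabla v_s$ when $s=1$). Thus everything reduces to showing that $R_s(\tau)\to0$ as $\tau\downarrow0$, \emph{uniformly} in $s\in(0,1]$ and in $v_s$ with $[v_s]_{s,p}\le C$: granted that, $\tau R_s(\tau)=o(\tau)$ with $o(\tau)$ depending only on $C$ and on the fixed $u$, which is the claim.

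To control $R_s(\tau)$ I would write it, through \eqref{LpLap}, as
\[
R_s(\tau)=\kappa_s\iint_{\mathbb{R}^n\times\mathbb{R}^n}\frac{\big[\phi_p(a)-\phi_p(b)\big]\,(v_s(x)-v_s(y))}{|x-y|^{n+sp}}\,dx\,dy,\qquad \phi_p(t)=|t|^{p-2}t,
\]
with $a=(u(x)-u(y))+\tau(v_s(x)-v_s(y))$, $b=u(x)-u(y)$, hence $a-b=\tau(v_s(x)-v_s(y))$. Inserting the elementary bounds $|\phi_p(a)-\phi_p(b)|\le c_p|a-b|^{p-1}$ for $1<p<2$ and $|\phi_p(a)-\phi_p(b)|\le(p-1)(|a|+|b|)^{p-2}|a-b|$ for $p\ge2$, and using $\iint_{\mathbb{R}^n\times\mathbb{R}^n}|w(x)-w(y)|^p|x-y|^{-n-sp}\,dx\,dy=[w]_{s,p}^p/((1-s)C_{n,p})$ so that $\kappa_s=(1-s)C_{n,p}/2$ cancels, one gets for $1<p<2$
\[
|R_s(\tau)|\ \le\ c_p'\,\tau^{p-1}[v_s]_{s,p}^p\ \le\ c_p'\,C^p\,\tau^{p-1},
\]
hence $\tau|R_s(\tau)|\le c_p'C^p\tau^p=o(\tau)$ uniformly; and for $p\ge2$, bounding $|a|+|b|\le 2|u(x)-u(y)|+\tau|v_s(x)-v_s(y)|$, $(A+B)^{p-2}\le C_p(A^{p-2}+B^{p-2})$, and then applying H\"older's inequality with conjugate exponents $\tfrac{p}{p-2}$ and $\tfrac{p}{2}$ to the two resulting integrals,
\[
|R_s(\tau)|\ \le\ C_p'\,\tau\big([u]_{s,p}^{p-2}[v_s]_{s,p}^{2}+\tau^{p-2}[v_s]_{s,p}^p\big)\ \le\ C_p'\big(M^{p-2}C^2\,\tau+C^p\,\tau^{p-1}\big),
\]
where $M:=\sup_{0<s<1}[u]_{s,p}$, so that $\tau|R_s(\tau)|=O(\tau^2)+O(\tau^p)=o(\tau)$ uniformly. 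The case $s=1$ runs verbatim with $|\nabla u|$, $|\nabla v_s|$ in place of the Gagliardo difference quotients (and $\norm{\nabla u}_{L^p(\Omega)}\le M$). Putting the two one-sided estimates together yields the asserted expansion, with $o(\tau)$ read off the right-hand sides, depending only on $C$.

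I expect the only substantive point to be the finiteness of $M=\sup_{0<s<1}[u]_{s,p}$ used in the superquadratic case: this is where the hypothesis $u\in W^{1,p}_0(\Omega)$ is genuinely used, and it follows from the Bourgain--Brezis--Mironescu convergence recalled above, $[u]_{s,p}^p\to\norm{\nabla u}_{L^p(\Omega)}^p$ as $s\nearrow1$ (so $[u]_{s,p}$ stays bounded near $s=1$), together with its boundedness as $s\downarrow0$; for the application in Section~4 it suffices to take the supremum over $s\in(s_0,1]$. Everything else is routine: the convexity and $C^1$-smoothness of $\mathcal{J}_{s,p}$ with $\mathcal{J}_{s,p}'=-\Delta_p^s$ hold for every $p>1$ and were stated above, the scalar inequalities for $\phi_p$ are classical (Lemma~1.11 of \cite{EdmundsEvansBook} and its subquadratic counterpart), no regularity of $v_s$ beyond the uniform seminorm bound $[v_s]_{s,p}\le C$ enters, and in the subquadratic range one does not even need $M$, the remainder being $O(\tau^{p-1})$ outright.
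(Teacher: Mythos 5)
The paper itself does not prove this statement: it is recalled verbatim as Lemma 2.7 of \cite{BonderSalort2020StabilityNonlocal}, so there is no internal proof to compare against. Judged on its own, your argument is essentially the standard one (and close in spirit to the cited source): the convexity sandwich $\tau g'(0)\le g(\tau)-g(0)\le \tau g'(\tau)$, the reduction to $R_s(\tau)=\langle -\Delta_p^s(u+\tau v_s)-(-\Delta_p^s u),v_s\rangle\ge 0$, the pointwise inequalities for $\phi_p$, and the H\"older estimates against the measure $\kappa_s|x-y|^{-n-sp}dx\,dy$ are all correct. In the range $1<p\le 2$ (and at $p=2$) your bound $\tau|R_s(\tau)|\le c_p' C^p\tau^{p}$ is genuinely uniform in $s$ and in $v_s$ with $[v_s]_{s,p}\le C$, with no quantitative input on $u$ at all, which is a clean way to get the claim.

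The one assertion that is wrong as stated is the boundedness of $[u]_{s,p}$ as $s\downarrow 0$. With the normalisation \eqref{NormDef}, which carries the factor $(1-s)$ but not $s$, the Maz'ya--Shaposhnikova asymptotics give $s\iint|u(x)-u(y)|^p|x-y|^{-n-sp}\,dx\,dy\to c_{n,p}\|u\|_{L^p}^p$, so $[u]_{s,p}\sim s^{-1/p}$ as $s\to 0$ for $u\not\equiv 0$ and $M=\sup_{0<s<1}[u]_{s,p}=+\infty$. Consequently, for $p>2$ your remainder bound $C_p'\bigl(M^{p-2}C^2\tau^2+C^p\tau^p\bigr)$ is uniform only over $s\in[s_0,1]$ for a fixed $s_0>0$: near $s=1$ the Bourgain--Brezis--Mironescu limit controls $[u]_{s,p}$, and for $s\ge s_0$ a splitting of the kernel at $|x-y|=1$ gives $[u]_{s,p}^p\le C(n,p)\bigl(\|\nabla u\|_{L^p(\Omega)}^p+s_0^{-1}\|u\|_{L^p(\Omega)}^p\bigr)$, but no such bound holds down to $s=0$. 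You flagged this yourself and proposed restricting to $(s_0,1]$; since every use of the lemma in this paper (and in \cite{BonderSalort2020StabilityNonlocal}) takes $s\ge s_0>0$ or $s\nearrow 1$, the restriction is harmless, but you should state explicitly that for $p>2$ your $o(\tau)$ depends on $C$, on $u$ and on $s_0$, i.e.\ the expansion is proved uniformly on $[s_0,1]$ rather than for all $0<s\le 1$ simultaneously.
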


This Lemma was used in \cite{BonderSalort2020StabilityNonlocal} to show the stability of solutions $u^s\to u$ in $L^p(\Omega)$ to semilinear Dirichlet problems as $s\to 1$. Observing that $f\in L^{p'}(\Omega)\subset L^{p^\natural_s}(\Omega)$ for all $0<s<1$, in the monotone case we obtain the following stronger result.

\begin{theorem}\label{SemiLinearConvgThm}
    Suppose that the Carathéodory function $F$ is non-increasing in $z$ and satisfies \eqref{FGrowth} with $f\in L^{p'}(\Omega)$. For $0<s_0<s<1$, if $s\to1$ then $u^s\to u$ in $W^{r,p}_0(\Omega)$, for any $0\leq r<1$, where $u\in W^{1,p}_0(\Omega)$ is the solution of 
\begin{equation}\label{DirichletProbEqConvg}
-\Delta_p u=F(x,u)\quad\text{ in }\Omega,\quad\quad u=0\quad\text{ on }\partial\Omega.
\end{equation} 

\end{theorem}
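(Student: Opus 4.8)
The plan is to combine the energy estimates coming from strong coercivity (Proposition~\ref{LgCoercive}), a uniform bound on $[u^s]_{s,p}$, and the asymptotic expansion of Lemma~\ref{BonderSalortConvgLemma} to upgrade the weak $L^p$-convergence of \cite{BonderSalort2020StabilityNonlocal} to strong convergence in every $W^{r,p}_0(\Omega)$ with $r<1$. First I would establish a uniform a~priori bound: testing \eqref{DirichletExistEq} for $u^s$ with $v=u^s$, using the monotonicity of $F$ (so $\langle -\Delta_p^s u^s, u^s\rangle \le \langle F(x,u^s),u^s\rangle \le \langle F(x,0),u^s\rangle$, after splitting $F(x,u^s)u^s = (F(x,u^s)-F(x,0))u^s + F(x,0)u^s \le F(x,0)u^s$), together with the Sobolev--Poincar\'e inequality of Lemma~\ref{Sobolev} and the fact that $f\in L^{p'}(\Omega)\subset L^{p^\natural_s}(\Omega)$ with constant $C_{SP}$ uniform for $s_0\le s<1$, yields $[u^s]_{s,p}\le M$ for a constant $M$ independent of $s\in[s_0,1)$. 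By the compact embedding $W^{s_0,p}_0(\Omega)\hookrightarrow L^q(\Omega)$ (Lemma~\ref{Sobolev}, noting $[u^s]_{s_0,p}\le C[u^s]_{s,p}$ up to a constant by monotonicity of Gagliardo seminorms in $s$), along a subsequence $u^s \to w$ in $L^q(\Omega)$ and a.e.; a standard lower-semicontinuity/$\Gamma$-convergence argument (the $\Gamma$-convergence $\mathcal{J}_{s_k,p}\to\mathcal{J}_{1,p}$ recalled in the excerpt) gives $w\in W^{1,p}_0(\Omega)$, and passing to the limit in the weak formulation identifies $w$ as the solution $u$ of \eqref{DirichletProbEqConvg}; uniqueness for \eqref{DirichletProbEqConvg} then removes the subsequence, so the whole family converges.

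The heart of the matter is promoting this to strong convergence of the seminorms and then of the functions. Here I would use the expansion of Lemma~\ref{BonderSalortConvgLemma} with the fixed limit function $u\in W^{1,p}_0(\Omega)$ and the perturbation $v_s=u^s-u\in W^{s,p}_0(\Omega)$ (which has $[v_s]_{s,p}$ uniformly bounded by $M+[u]_{s,p}\le M+C\|\nabla u\|_{L^p}$): choosing $\tau=1$ appropriately, or rather running the expansion to compare $\mathcal{J}_{s,p}(u^s)$ with $\mathcal{J}_{s,p}(u)$ plus the linearization $\langle-\Delta_p^s u, u^s-u\rangle$, and combining with the weak formulation for $u^s$ tested against $u^s-u$, one obtains
\[
\langle -\Delta_p^s u^s - (-\Delta_p^s)u,\, u^s-u\rangle \;=\; \langle F(x,u^s), u^s-u\rangle - \langle -\Delta_p^s u, u^s-u\rangle \;\longrightarrow\; 0,
\]
because the right-hand side converges: $F(x,u^s)\to F(x,u)$ in a suitable sense by Carath\'eodory continuity and dominated convergence via \eqref{FGrowth}, while $u^s-u\rightharpoonup 0$, and the term $\langle-\Delta_p^s u,u^s-u\rangle = \mathcal{J}'_{s,p}(u)[u^s-u]\to 0$ follows from $\mathcal{J}'_{s,p}(u)\to\mathcal{J}'_{1,p}(u)=-\Delta_p u$ as functionals (Lemma~\ref{BonderSalortConvgLemma} applied with $v_s=u^s-u$ and $\tau\to0^+$) together with $\langle -\Delta_p u, u^s-u\rangle\to 0$. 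Then the strong coercivity inequality \eqref{LgCoerciveEq} of Proposition~\ref{LgCoercive} gives, in the case $p\ge 2$, $2^{1-p}[u^s-u]_{s,p}^p \le \langle -\Delta_p^s u^s-(-\Delta_p^s)u, u^s-u\rangle \to 0$, hence $[u^s-u]_{s,p}\to 0$; in the case $1<p<2$ the same conclusion follows since the denominator $([u^s]_{s,p}+[u]_{s,p})^{2-p}$ stays bounded. Finally, $[u^s-u]_{s,p}\to 0$ implies $[u^s-u]_{r,p}\to 0$ for every $r<1$ via the interpolation-type inequality $[v]_{r,p}^p \le C(n,p,r,s_0)\,\big(\|v\|_{L^p(\Omega)}^p + [v]_{s,p}^p\big)$ valid uniformly for $r\le s_0 \le s$ (or a direct comparison of Gagliardo kernels over $|x-y|\le 1$ and $|x-y|>1$ exploiting boundedness of $\Omega$ and the factor $1-r$ versus $1-s$), together with $\|u^s-u\|_{L^p(\Omega)}\to0$; this is exactly convergence in $W^{r,p}_0(\Omega)$.

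The main obstacle I anticipate is the careful bookkeeping of the $s$-dependence of constants and of the normalizing factor $\kappa_s=(1-s)C_{n,p}/2$: one must ensure the Poincar\'e and Sobolev--Poincar\'e constants are uniform on $[s_0,1)$ (guaranteed by Lemmas~\ref{Poincare} and~\ref{Sobolev} as stated), that the seminorm comparison $[v]_{r,p}\lesssim [v]_{s,p}+\|v\|_{L^p}$ holds with an $s$-uniform constant as $s\to1$ (the vanishing factor $1-s$ must be handled so it does not blow up the small-scale part), and that the term $\langle -\Delta_p^s u,u^s-u\rangle\to0$ genuinely uses both the weak convergence $u^s-u\rightharpoonup0$ in $W^{r,p}_0$ \emph{and} the convergence $\mathcal{J}'_{s,p}(u)\to -\Delta_p u$ — a single application of weak convergence is not enough because the operator itself varies with $s$. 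Once these uniformities are in place, the argument is the standard monotone-operator strong-convergence scheme adapted to the $\Gamma$-converging family $\{\mathcal{J}_{s,p}\}$.
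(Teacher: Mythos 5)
Your proposal is correct in outline, but its second half takes a genuinely different route from the paper. The first half is the same: testing \eqref{DirichletVarEq} with $v=u^s$ and using the monotonicity of $F$ together with Poincar\'e gives the $s$-uniform bound \eqref{DirichletConvgEq}, and then compactness in $L^p(\Omega)$ plus the $\Gamma$-convergence of $\mathcal{J}_{s,p}$ identifies every accumulation point with the unique solution $u$ of \eqref{DirichletProbEqConvg} (the paper simply cites Theorem 3.3 of \cite{BonderSalort2020StabilityNonlocal} for this). For the upgrade to strong convergence, however, the paper does much less than you do: it never proves $[u^s-u]_{s,p}\to 0$, but only observes that the family is bounded in $W^{s,p}_0(\Omega)$ for each fixed $s>s_0$, hence converges weakly there, and then uses the compact embedding $W^{s,p}_0(\Omega)\Subset W^{r,p}_0(\Omega)$ for $r<s$ together with uniqueness of the limit to get strong convergence in $W^{r,p}_0(\Omega)$. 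Your Minty-type argument --- the weak formulation tested with $u^s-u$, the expansion of Lemma \ref{BonderSalortConvgLemma} (in effect Lemma 2.8 of \cite{BonderSalort2020StabilityNonlocal}, i.e. the $\sigma=1$ case of Lemma \ref{BonderSalortConvgResult}) to send $\langle-\Delta_p^s u,u^s-u\rangle\to 0$, and then the strong coercivity \eqref{LgCoerciveEq} of Proposition \ref{LgCoercive} --- is sound and in fact yields a stronger conclusion (convergence of the natural $s$-energies). For the term $\langle F(\cdot,u^s),u^s-u\rangle$, the monotonicity of $F$ gives the cleaner bound $\langle F(\cdot,u^s)-F(\cdot,u),u^s-u\rangle\leq 0$, so that this term is controlled by $\langle F(\cdot,u),u^s-u\rangle\to 0$, avoiding dominated convergence with the $s$-dependent exponent $p^\natural_s$.

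The one step where your justification is insufficient as written is the transfer $[u^s-u]_{s,p}\to 0\Rightarrow[u^s-u]_{r,p}\to 0$ with an $s$-uniform constant. The ``direct comparison of Gagliardo kernels'' cannot give this: on the near region $|x-y|\leq 1$ the ratio of the normalized kernels is $\frac{1-r}{1-s}\,|x-y|^{(s-r)p}$, which is not uniformly bounded as $s\nearrow 1$, precisely the danger you flag. The inequality $[v]_{r,p}^p\leq C(n,p,r)\bigl([v]_{s,p}^p+\|v\|_{L^p}^p\bigr)$ with $C$ independent of $s\in[s_0,1)$ is nevertheless true, but the standard way to get it is through the $s$-uniform translation estimate $\|v(\cdot+h)-v\|_{L^p(\mathbb{R}^n)}^p\leq C(n,p)\,|h|^{sp}\,[v]_{s,p}^p$, followed by integration of the kernel in $h$; this lemma is neither stated in the paper nor proved in your proposal, so it must be supplied (or cited from the Bourgain--Brezis--Mironescu/Brasco-type literature). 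Note that the paper's own route also relies implicitly on a seminorm comparison across different $s$ (boundedness in $W^{s,p}_0(\Omega)$ of the tail of the sequence with $s_n>s$), so the issue is not an artifact of your approach; but since your scheme needs it quantitatively in the final step, you should either prove the translation estimate or fall back on the paper's softer compactness-plus-uniqueness argument, which suffices for the statement as claimed.
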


\begin{proof}
    Denoting $u^s$ the solution of \eqref{DirichletExistEq} and letting $v=u^s$ in the corresponding variational formulation \eqref{DirichletVarEq} one obtains
    \begin{equation}\label{DirichletConvgEq}[u^s]_{s,p}\leq \left(2c_{P}\norm{F(\cdot,0)}_{L^{p'}(\Omega)}\right)^\frac{1}{p-1}=\left(2c_{P}\norm{f}_{L^{p'}(\Omega)}\right)^\frac{1}{p-1}.\end{equation}

    Therefore, by the assumption on $F$, there exists a constant $C_1>0$, depending on $f$ and on $\Omega$ but independent of $s>s_0$, such that $\sup_{s} [u^s]_{s,p}\leq C_1$. Then, by  Theorem 3.3 of \cite{BonderSalort2020StabilityNonlocal}, any accumulation point $u$  of any subsequence $\{u^{s_n}\}_{n}$ as $s_n\to1$, in the $L^p(\Omega)$-topology, satisfies $u\in W^{1,p}_0(\Omega)$ and solves weakly the equation \eqref{DirichletProbEqConvg}. Since the subsequence $\{u^{s_n}\}_{n}$ is bounded in $W^{s,p}_0(\Omega)$ for each $s>s_0$, we have the convergence of any of those subsequences also in $W^{s,p}_0(\Omega)$-weak. By the compactness of $W^{s,p}_0(\Omega)\Subset W^{r,p}_0(\Omega)$, for any $0\leq r<s\leq 1$ we finally conclude the strong convergence of the sequence $s\to 1$ in this later space by the uniqueness of the solution of \eqref{DirichletProbEqConvg}.
\end{proof}

\begin{remark}
    When $F$ does not depend on $u$, this is Theorem 6.6 of \cite{BonderSalort2019JFAFracOrliczSobolev}. When $F$ does depend on $u$ without the monotonicity assumption and has a growth \eqref{FGrowth} with the power $p_s^\natural$ replaced by any $q$, such that $1\leq q<p_s^*-1$, this is Theorem 3.3 of \cite{BonderSalort2020StabilityNonlocal}. However the lack of uniqueness in that semilinear Dirichlet problem, in general, implies that the conclusion is only for any accumulation point $u$ of subsequences $u^{s_k}$ for $0<s_k\to 1$.
\end{remark}

Moreover we can use the H\"older regularity estimates for the linear Dirichlet problem in Theorem 1.1 of \cite{IannizzottoMosconiSquassina2016RMI_FracPLapGlobalHolder}, Theorem 8 of \cite{Palatucci2018NonAFracPLapObstacleHolder}, Theorem 6 of \cite{K+Kuusi+Palatucci2016CVPDE-ObstaclePbFracPLap}, or Theorem 1.3 of \cite{Piccinini2022NonA-ObstaclePbFracPLap} to obtain the H\"older continuity of the solution to the semilinear Dirichlet problem up to the boundary, which is exactly the last statement of the following theorem. In addition, its first part concerns the robust local estimates, i.e., estimates that are independent of $s$ far from $0$, which corresponds to Theorem 1.1(i) of \cite{ChakerKimWeidner2022CVPDERegGLap}. Here $C^\alpha(\omega)$ denotes the space of H\"older continuous functions in $\omega$ for some $0<\alpha<1$.     

\begin{theorem}\label{SemiLinearRegThm}
Let $u^s$ denote the solution of the semilinear problem \eqref{ObsProb} with $F$ satisfying \eqref{FGrowth} with $f\in L^\infty(\Omega)$ and $C=0$. 
 For a fixed $s_0$, $0<s_0<1$, and $s$ such that $s_0\leq s<1$, we have $u^s\in C^{\beta}_{loc}(\Omega)$ uniformly in $s$, i.e.  $u^s$ is locally H\"older continuous in $\Omega$, for some $0<\beta\leq s_0$ depending only on $n$ and $p$. This means that for every open subset $\omega\Subset\Omega$ there exists a constant $C_\omega>0$ depending only on $n$, $p$ and $\norm{f}_{L^\infty(\Omega)}$ and independent of $s$, such that $u^s$ satisfies
    \begin{equation}\label{Robust}
    u^s\in C^\beta(\bar{\omega})\quad\text{ with }\quad  \norm{u^s}_{C^\beta(\bar{\omega})}\leq C_\omega \quad \text{for all } s, s_0\leq s<1.\end{equation}

    Moreover, if in addition  $\partial\Omega\in C^{0,1}$, then, $u^s\in W^{s,p}_0(\Omega)\cap C^\alpha(\bar{\Omega})$, i.e.  $u^s$ is globally H\"older continuous in $\Omega$, for some $0<\alpha<1$ depending on $n$, $p$ and $s$, $0<s<1$.

\end{theorem}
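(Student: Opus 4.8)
The plan is to freeze the nonlinearity and then invoke robust interior and up-to-the-boundary Hölder estimates for the normalised fractional $p$-Laplacian with a bounded right-hand side. First, since $C=0$ in \eqref{FGrowth}, the datum $g^s:=F(\cdot,u^s)$ satisfies $|g^s|\le f$ a.e.\ in $\Omega$, so that $\norm{g^s}_{L^\infty(\Omega)}\le\norm{f}_{L^\infty(\Omega)}$ independently of $s$, and $u^s\in W^{s,p}_0(\Omega)$ solves $-\Delta_p^su^s=g^s$ in $\Omega$, $u^s=0$ in $\mathbb{R}^n\backslash\Omega$. Testing this equation with $v=u^s$ and using Lemma~\ref{Poincare} exactly as in \eqref{DirichletConvgEq}, one gets $[u^s]_{s,p}\le(2c_P\norm{f}_{L^{p'}(\Omega)})^{1/(p-1)}$, hence $\sup_{s_0\le s<1}[u^s]_{s,p}<\infty$. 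From this uniform energy bound together with $\norm{g^s}_{L^\infty(\Omega)}\le\norm{f}_{L^\infty(\Omega)}$ I would then extract a uniform bound $\norm{u^s}_{L^\infty(\mathbb{R}^n)}=\norm{u^s}_{L^\infty(\Omega)}\le C_0$, with $C_0=C_0(n,p,s_0,\Omega,\norm{f}_{L^\infty(\Omega)})$: immediately from Lemma~\ref{Sobolev} when $s_0p>n$, and in general by comparison against a radial barrier or by a De~Giorgi--Moser iteration chosen so as to remain stable as $s\nearrow1$; the point is that the nonlocal tail $\int_{\mathbb{R}^n}|u^s(y)|^{p-1}|x-y|^{-n-sp}\,dy$ is then controlled uniformly in $s\ge s_0$, since $u^s$ vanishes outside the bounded set $\Omega$.

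For the interior assertion I would apply the robust local regularity estimate of Theorem~1.1(i) of \cite{ChakerKimWeidner2022CVPDERegGLap} to $-\Delta_p^su^s=g^s$. Its constants are uniform over the family of kernels $\kappa_s|x-y|^{-n-sp}$, $s\in[s_0,1)$ --- the normalisation $\kappa_s=(1-s)C_{n,p}/2$ being precisely what keeps these estimates non-degenerate up to the local limit $s=1$ --- and this yields, for every $\omega\Subset\Omega$, that $u^s\in C^\beta(\bar\omega)$ with $\norm{u^s}_{C^\beta(\bar\omega)}\le C\big(\norm{u^s}_{L^\infty(\mathbb{R}^n)}+\norm{g^s}_{L^\infty(\Omega)}\big)\le C_\omega$, where $0<\beta\le s_0$ depends only on $n$ and $p$, and $C_\omega$ depends only on $n$, $p$, $s_0$, $\omega$ and $\norm{f}_{L^\infty(\Omega)}$, all independent of $s$. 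This is precisely \eqref{Robust}.

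For the global statement, assuming $\partial\Omega\in C^{0,1}$, the membership $u^s\in W^{s,p}_0(\Omega)$ is part of the notion of solution of the Dirichlet problem, and the up-to-the-boundary Hölder estimate of Theorem~1.1 of \cite{IannizzottoMosconiSquassina2016RMI_FracPLapGlobalHolder} (or Theorem~8 of \cite{Palatucci2018NonAFracPLapObstacleHolder}, Theorem~6 of \cite{K+Kuusi+Palatucci2016CVPDE-ObstaclePbFracPLap}, or Theorem~1.3 of \cite{Piccinini2022NonA-ObstaclePbFracPLap}), applied to $-\Delta_p^su^s=g^s$ with $g^s\in L^\infty(\Omega)$, gives $u^s\in C^\alpha(\bar\Omega)$ for some $\alpha=\alpha(n,p,s)\in(0,1)$; here no uniformity in $s$ is needed, so this part is immediate once the reduction to a bounded datum has been made.

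The main obstacle is the robustness of the interior step: one must check that the general operator family treated in \cite{ChakerKimWeidner2022CVPDERegGLap} indeed covers the normalised fractional $p$-Laplacian used here, and that neither the exponent $\beta$ nor the constant $C_\omega$ degenerates as $s\nearrow1$ (the natural limiting object being the local $p$-Laplacian $-\Delta_p$), together with securing the uniform $L^\infty$-bound and the uniform tail control that feed that estimate. By contrast, the $s$-dependent global exponent $\alpha$ is a soft consequence of the cited boundary estimates and requires no such care.
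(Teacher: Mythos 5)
Your proposal follows essentially the same route as the paper, which states this theorem without a separate proof and attributes its two parts directly to the cited references: the robust interior estimate of Theorem 1.1(i) of \cite{ChakerKimWeidner2022CVPDERegGLap} (the normalisation $\kappa_s=(1-s)C_{n,p}/2$ being what keeps it uniform as $s\nearrow1$) and the global H\"older estimates of \cite{IannizzottoMosconiSquassina2016RMI_FracPLapGlobalHolder} et al., applied after freezing the nonlinearity into a bounded right-hand side $|F(\cdot,u^s)|\le f$. Your added care about the uniform $L^\infty$ bound and tail control is a sensible filling-in of details the paper leaves implicit, and is correct.
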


As a consequence of Ascoli-Arzelà theorem and the previous results we have the following interesting new result.
\begin{corollary}
    Under assumptions of Theorems \ref{SemiLinearConvgThm} and \ref{SemiLinearRegThm}, there is a $\lambda$, $0<\lambda<\beta$, such that 
    \begin{equation}u^s\to u\quad\text{ in }C^\lambda(\bar{\omega})\quad\text{ as } s\nearrow1, \quad \text{ for every open subset $\omega\Subset\Omega$,}\end{equation} and therefore the convergence is uniform in the interior of $\Omega$.
\end{corollary}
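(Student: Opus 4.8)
The plan is to combine the uniform interior Hölder estimate of Theorem~\ref{SemiLinearRegThm} with the strong $W^{r,p}_0$-convergence of Theorem~\ref{SemiLinearConvgThm} through an Ascoli--Arzelà compactness argument, and then to upgrade the resulting uniform convergence to convergence in $C^\lambda(\bar\omega)$ by the elementary Hölder interpolation inequality. Fix $\omega\Subset\Omega$. Since $f\in L^\infty(\Omega)$ and $C=0$, by \eqref{Robust} there is $C_\omega>0$, independent of $s\in[s_0,1)$, with $\norm{u^s}_{C^\beta(\bar\omega)}\le C_\omega$; in particular the family $\{u^s\}_{s_0\le s<1}$ is uniformly bounded and equicontinuous on the compact set $\bar\omega$.

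To identify the limit, take any sequence $s_k\nearrow 1$. By Ascoli--Arzelà a subsequence (not relabelled) converges uniformly on $\bar\omega$ to some $w\in C^\beta(\bar\omega)$ with $\norm{w}_{C^\beta(\bar\omega)}\le C_\omega$. On the other hand, Theorem~\ref{SemiLinearConvgThm} applies (as $f\in L^\infty(\Omega)\subset L^{p'}(\Omega)$ since $\Omega$ is bounded) and gives $u^{s_k}\to u$ in $W^{r,p}_0(\Omega)$, hence in $L^p(\Omega)$, so a further subsequence converges a.e. in $\Omega$; this forces $w=u$ a.e. in $\omega$, whence $u\in C^\beta(\bar\omega)$ with $\norm{u}_{C^\beta(\bar\omega)}\le C_\omega$. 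As the limit $u$ does not depend on the extracted subsequence, a standard argument shows the whole family satisfies $u^s\to u$ uniformly on $\bar\omega$ as $s\nearrow1$.

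It remains to pass from $C^0$ to $C^\lambda$. Set $v^s:=u^s-u$, so $\norm{v^s}_{C^\beta(\bar\omega)}\le 2C_\omega$ for all $s\in[s_0,1)$ while $\norm{v^s}_{C^0(\bar\omega)}\to 0$. The interpolation inequality for Hölder seminorms,
\[
[v]_{C^\lambda(\bar\omega)}\le \bigl(2\norm{v}_{C^0(\bar\omega)}\bigr)^{1-\lambda/\beta}\,[v]_{C^\beta(\bar\omega)}^{\lambda/\beta},\qquad 0<\lambda<\beta,
\]
applied to $v=v^s$ gives $\norm{v^s}_{C^\lambda(\bar\omega)}=\norm{v^s}_{C^0(\bar\omega)}+[v^s]_{C^\lambda(\bar\omega)}\le \norm{v^s}_{C^0(\bar\omega)}+(2\norm{v^s}_{C^0(\bar\omega)})^{1-\lambda/\beta}(2C_\omega)^{\lambda/\beta}\to 0$, i.e. $u^s\to u$ in $C^\lambda(\bar\omega)$ for every $0<\lambda<\beta$. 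Since $\omega\Subset\Omega$ is arbitrary and $C^\lambda(\bar\omega)$-convergence entails uniform convergence on $\bar\omega$, the convergence is uniform on compact subsets of $\Omega$. No genuine obstacle arises; the only point requiring a little care is the identification of the uniform limit with $u$, which is settled by the $L^p$-convergence together with the uniqueness of the solution of \eqref{DirichletProbEqConvg}.
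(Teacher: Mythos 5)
Your argument is correct and follows essentially the same route the paper intends: the uniform $C^\beta(\bar\omega)$ bound of Theorem \ref{SemiLinearRegThm} combined with Ascoli--Arzel\`a, identification of the uniform limit with $u$ through the $W^{r,p}_0(\Omega)$ (hence $L^p$) convergence of Theorem \ref{SemiLinearConvgThm} and uniqueness, and the standard H\"older interpolation to upgrade uniform convergence to $C^\lambda(\bar\omega)$ for any $0<\lambda<\beta$. The explicit interpolation step and the subsequence/uniqueness argument are exactly the details the paper leaves implicit, so nothing further is needed.
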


\section{Uniform H\"older Continuity in the Nonlocal $p$-Obstacle Problem}

The nonlocal $p$-obstacle problem \eqref{ObsProb} may be approximated by a family of semilinear nonlocal elliptic Dirichlet problems, with a precise error estimate, by means of a bounded penalisation. As a consequence, this yields uniform estimates for the solutions, which are important to study the stability of the obstacle problem with respect to $s$ for $0<s_0\leq s\leq 1$. We make use of the Stampacchia's method of bounded penalisation, as described, for instance, in Theorem 4.6 of \cite{FracObsRiesz} for the classical obstacle problem with $p=2$ and $s=1$. 

Consider the penalised problem with $f=f^+ -f^-\in L^{p^\natural_s}(\Omega)$ given by 
\begin{equation}\label{PenalProb}u_\varepsilon\in W^{s,p}_0(\Omega):\quad\langle -\Delta_p^s u_\varepsilon,v\rangle+\int_\Omega f^- \theta_\varepsilon (u_\varepsilon) v=\int_\Omega  f^+ v, \quad\forall v\in W^{s,p}_0(\Omega),\end{equation} 
where 
$\theta_\varepsilon(t)$ is an approximation to the multi-valued Heaviside graph defined by
\[\theta_\varepsilon(t)=\theta\left(\frac{t}{\varepsilon}\right),\quad t\in\mathbb{R}\] 
for an arbitrary nondecreasing Lipschitz function $\theta:\mathbb{R}\to[0,1]$ satisfying \begin{equation}\label{ThetaDef}\theta\in C^{0,1}(\mathbb{R}),\quad\theta'\geq0,\quad\theta(+\infty)=1\quad\text{ and }\theta(t)=0\text{ for }t\leq0;\end{equation}\begin{equation}\label{ThetaDef2}\exists C_\theta>0:[1-\theta(t)]t\leq C_\theta,\quad t>0.\end{equation}  

By Theorem \ref{DirichletExistThm}, there exists a unique solution $u_{\varepsilon}$ to the monotone perturbation of the nonlocal $p$-Laplacian Dirichlet problem \eqref{PenalProb}, which converges to the solution of the obstacle problem as $\varepsilon \to 0$.

\begin{theorem}\label{PenalProbEstThm}
The solution $u_\varepsilon$ of the penalised problem \eqref{PenalProb} converges to the solution $u$ of the obstacle problem \eqref{ObsProb} strongly in $W^{s,p}_0(\Omega)$ as $\varepsilon \to 0$, with the estimate 
\begin{equation}\label{PenalProbEst}[u_\varepsilon-u]_{s,p}\leq  \begin{cases}C_p \varepsilon^{1/p} &\text{ for }p\geq2,\\
C_p \varepsilon^{1/2} &\text{ for }1<p<2,\end{cases}\end{equation} where \begin{equation}\label{Cp}C_p=\begin{cases}2^{-2/p}C_\theta^{1/p}\norm{ f^- }^{1/p}_{L^1(\Omega)} &\text{ for }p\geq2,\\
\frac{C_\theta^{1/2}}{(p-1)^{1/(2p-2)}}2^{\frac{-p^2-2p+4}{2p}}\norm{ f^- }_{L^1(\Omega)}^{1/2}\left(C_{SP}\norm{f^+}_{L^{p^\natural_s}(\Omega)}^{1/(p-1)}\right)^{1-p/2}&\text{ for }1<p<2.\end{cases}\end{equation} 
    
\end{theorem}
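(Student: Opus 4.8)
The plan is to compare $u_\varepsilon$ and $u$ directly in the Gagliardo seminorm, exploiting both the variational inequality satisfied by $u$ and the equation satisfied by $u_\varepsilon$. First I would use $v=u-u_\varepsilon\in W^{s,p}_0(\Omega)$ as a test function in the penalised equation \eqref{PenalProb} and, since $u\in\mathbb{K}^s$, I would use $v=u_\varepsilon^+$ (or rather the admissible competitor $u+(u_\varepsilon-u)^-=u\vee u_\varepsilon\geq 0$, so that $v-u=(u_\varepsilon-u)^-$) in the obstacle inequality \eqref{ObsProb}; one should check $u_\varepsilon\in W^{s,p}_0(\Omega)$ is enough for these manipulations. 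Adding the two relations produces, on the left-hand side, the quantity $\langle-\Delta_p^su_\varepsilon-(-\Delta_p^su),u_\varepsilon-u\rangle$ plus the penalisation term $\int_\Omega f^-\theta_\varepsilon(u_\varepsilon)(u_\varepsilon-u)$, and on the right a term that vanishes or has a favourable sign because $f^+\geq 0$ and $u_\varepsilon-u$ tested against $f^+$ on the relevant set. The key is to isolate $\int_\Omega f^-\theta_\varepsilon(u_\varepsilon)(u-u_\varepsilon)$ and bound it.

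The crucial estimate is on the penalisation term. On $\{u_\varepsilon\leq 0\}$ we have $\theta_\varepsilon(u_\varepsilon)=0$, so only $\{u_\varepsilon>0\}$ contributes; there $u\geq 0$, hence $\theta_\varepsilon(u_\varepsilon)(u-u_\varepsilon)\leq \theta_\varepsilon(u_\varepsilon)\cdot(-u_\varepsilon)^{}$ when $u_\varepsilon\geq u$, but more usefully one writes $f^-\theta_\varepsilon(u_\varepsilon)(u-u_\varepsilon)\leq f^-[1-\theta_\varepsilon(u_\varepsilon)]u_\varepsilon\cdot(\text{sign stuff})$ — the point being to invoke property \eqref{ThetaDef2} in the rescaled form $[1-\theta_\varepsilon(t)]t=\varepsilon[1-\theta(t/\varepsilon)](t/\varepsilon)\leq C_\theta\varepsilon$ for $t>0$. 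This gives a bound of the penalisation contribution by $C_\theta\varepsilon\,\|f^-\|_{L^1(\Omega)}$, up to the constant $2^{-2/p}$ or similar coming from how the term is split. I would then invoke the strong coercivity of Proposition \ref{LgCoercive}: for $p\geq 2$ the left-hand side dominates $2^{1-p}[u_\varepsilon-u]_{s,p}^p$, so $[u_\varepsilon-u]_{s,p}^p\leq 2^{p-1}C_\theta\varepsilon\|f^-\|_{L^1}$ up to bookkeeping, yielding the $\varepsilon^{1/p}$ rate with the stated $C_p$. For $1<p<2$ the coercivity bound is $(p-1)2^{(p^2-4p+2)/p}[u_\varepsilon-u]_{s,p}^2/([u_\varepsilon]_{s,p}+[u]_{s,p})^{2-p}$, so I also need an a priori bound $[u_\varepsilon]_{s,p},[u]_{s,p}\leq (C_{SP}\|f^+\|_{L^{p^\natural_s}})^{1/(p-1)}$ (obtained by testing \eqref{PenalProb} with $v=u_\varepsilon$, discarding the nonnegative penalisation term, and applying the Sobolev–Poincaré inequality of Lemma \ref{Sobolev}); substituting gives $[u_\varepsilon-u]_{s,p}^2\lesssim \varepsilon\,(\|f^+\|_{L^{p^\natural_s}}^{1/(p-1)})^{2-p}\|f^-\|_{L^1}$, hence the $\varepsilon^{1/2}$ rate with the displayed constant.

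Strong convergence in $W^{s,p}_0(\Omega)$ then follows immediately from the estimate \eqref{PenalProbEst} letting $\varepsilon\to 0$, once one knows the limit is the solution of \eqref{ObsProb}; but in fact the estimate already identifies $u$ as that limit, so no separate compactness argument is needed — one only needs to have chosen $u$ from the start to be the solution of the obstacle problem and verified that the penalised solutions form a Cauchy family converging to it, which the quantitative bound gives directly. I expect the main obstacle to be the careful sign analysis in the penalisation term: making sure that after summing the two variational relations every term except the coercive one and the $O(\varepsilon)$ penalisation term has the correct sign, in particular handling the $f^+$ contribution and the decomposition of $\int f^-\theta_\varepsilon(u_\varepsilon)(u-u_\varepsilon)$ over the sets $\{u_\varepsilon>0\}$ and $\{u_\varepsilon\leq 0\}$ so that \eqref{ThetaDef2} can be applied cleanly. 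The constants $2^{-2/p}$ etc.\ will come out of tracking the exponents in Proposition \ref{LgCoercive} together with possibly an application of $(a+b)^p\le 2^{(p-1)^+}(a^p+b^p)$; this is routine once the structure is set up.
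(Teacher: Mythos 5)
Your overall strategy is the paper's: test the penalised equation \eqref{PenalProb} with $u-u_\varepsilon$, test the obstacle inequality \eqref{ObsProb} with a suitable competitor, add the two so that only the monotone quantity $\langle -\Delta_p^su_\varepsilon-(-\Delta_p^s u),u_\varepsilon-u\rangle$ plus an $O(\varepsilon)$ penalisation error survives, bound that error by \eqref{ThetaDef2}, apply the coercivity \eqref{LgCoerciveEq}, and for $1<p<2$ insert the a priori bound obtained by testing \eqref{PenalProb} with $u_\varepsilon$ (this is exactly \eqref{Eq11}). Your remark that no separate compactness/Minty argument is needed once the estimate is taken against the unique solution $u$ is also acceptable; the paper includes the weak-limit identification only for completeness.

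However, there is a genuine gap at the choice of test function in \eqref{ObsProb}. The paper's first step is to prove $u_\varepsilon\geq 0$, by taking $v=u_\varepsilon^-$ in \eqref{PenalProb} and invoking the strict T-monotonicity \eqref{LgTMonotoneEq}; only then is $v=u_\varepsilon$ admissible in \eqref{ObsProb}, and with $v-u=u_\varepsilon-u$ the two relations add up to the full coercive quantity with only the $\varepsilon C_\theta\int_\Omega f^-$ error. You skip this step and propose instead $v=u_\varepsilon^+$ or ``$u+(u_\varepsilon-u)^-=u\vee u_\varepsilon$''. First, the identity is wrong: $u\vee u_\varepsilon=u+(u_\varepsilon-u)^+$, while $u+(u_\varepsilon-u)^-=u+(u-u_\varepsilon)^+$. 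More importantly, neither substitute closes the argument. With $v=u\vee u_\varepsilon$ one gets $v-u=(u_\varepsilon-u)^+$, so the sum only controls $\langle -\Delta_p^su_\varepsilon-(-\Delta_p^su),(u_\varepsilon-u)^+\rangle$ (up to a term $\int_\Omega f^-[1-\theta_\varepsilon(u_\varepsilon)](u-u_\varepsilon)^+$ which is not $O(\varepsilon)$ on $\{u_\varepsilon\leq 0\}$, where $\theta_\varepsilon(u_\varepsilon)=0$), and there is no quantitative coercivity of the type \eqref{LgCoerciveEq} for the pairing against the positive part alone — only the qualitative strict T-monotonicity. With $v=u_\varepsilon^+$ one has $v-u=(u_\varepsilon-u)+u_\varepsilon^-$, and the addition leaves the extra term $\langle -\Delta_p^su-f,u_\varepsilon^-\rangle\geq 0$ on the wrong side of the inequality; it is not $O(\varepsilon)$ unless one already knows $u_\varepsilon^-=0$. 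So the nonnegativity of $u_\varepsilon$ is not optional bookkeeping but the missing idea; once it is established (one line via \eqref{LgTMonotoneEq}), your sign analysis of the penalisation term via \eqref{ThetaDef2}, the coercivity step, and the $1<p<2$ a priori bound go through exactly as you outline and reproduce the stated constants.
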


\begin{proof}

We first show that $u_\varepsilon\geq0$, so that the solution $u_\varepsilon\in \mathbb{K}^s$ for each $\varepsilon>0$. 
Indeed, taking $v=u_\varepsilon^-=(-u_\varepsilon)^+\geq0$ in \eqref{PenalProb}, we have \begin{align*}\langle -\Delta_p^s u_\varepsilon,u_\varepsilon^-\rangle = -\int_\Omega f^- \theta_\varepsilon (u_\varepsilon)  u_\varepsilon^- +\int_\Omega f^+  u_\varepsilon^-  \geq -\int_\Omega f^- \theta_\varepsilon (u_\varepsilon)  u_\varepsilon^- =0.\end{align*} The last equality is true because either $u_\varepsilon>0$ which gives $ u_\varepsilon^- =0$, or $u_\varepsilon\leq0$ which gives $\theta_\varepsilon (u_\varepsilon) =0$ by the construction of $\theta$ in \eqref{ThetaDef}--\eqref{ThetaDef2}, thus implying $\theta_\varepsilon (u_\varepsilon)  u_\varepsilon^- =0$. By the T-monotonicity of $-\Delta_p^s$, we conclude $ u_\varepsilon^- =0$, i.e. $u_\varepsilon\in \mathbb{K}^s$ for any $\varepsilon>0$.

Furthermore, we can show that $u_\varepsilon\geq0$ converges weakly in $W^{s,p}_0(\Omega)$ as $\varepsilon\to0$ to some $u$, which by uniqueness, is the solution of the obstacle problem. Indeed, the argument is classical, but we include it here for completeness. Taking $v=w-u_\varepsilon$ in \eqref{PenalProb} for arbitrary $w\in\mathbb{K}^s$, recalling $f^+=f+f^-$ and \eqref{ThetaDef2}, we have 
\begin{align}\langle -\Delta_p^su_\varepsilon,w-u_\varepsilon\rangle&=\int_\Omega f(w-u_\varepsilon)+\int_\Omega f^- [1-\theta_\varepsilon (u_\varepsilon) ](w-u_\varepsilon)\nonumber
\\&\geq\int_\Omega f(w-u_\varepsilon)-\varepsilon\int_\Omega f^- [1-\theta_\varepsilon (u_\varepsilon) ]\frac{u_\varepsilon}{\varepsilon}\nonumber
\\&\geq\int_\Omega f(w-u_\varepsilon)-\varepsilon C_\theta\int_\Omega f^- \label{Eq10}\end{align} since $ f^- ,1-\theta_\varepsilon$ and $w\geq0$ for $w\in\mathbb{K}^s_\psi$. 

By the monotonicty of $-\Delta_p^s$, for any $w\in\mathbb{K}^s$,
\[\langle -\Delta_p^s w,w-u_\varepsilon \rangle \geq \langle -\Delta_p^s u_\varepsilon,w-u_\varepsilon \rangle \geq\int_\Omega f(w-u_\varepsilon)-\varepsilon C_\theta\int_\Omega f^- .\] Since $[u_\varepsilon]_{s,p}$ is bounded independently of $\varepsilon>0$, at the limit $\varepsilon\to0$, letting $u$ be a weak limit of $u_\varepsilon$, we have
\[\langle -\Delta_p^s w,w-u \rangle  \geq \int_\Omega f(w-u).\] Now take $w_\delta = u+\delta (v-u)$ for any $v\in\mathbb{K}^s$ and $0<\delta<1$ . Then, 
dividing by $\delta$ and letting $\delta\to0$, we find 
\[\langle -\Delta_p^s u,v-u \rangle  \geq \int_\Omega f(v-u) \quad \forall v\in\mathbb{K}^s,\] so $u$ satisfies the obstacle problem \eqref{ObsProb}.

Now, taking $w=u$ in \eqref{Eq10}, we obtain \[\langle -\Delta_p^su_\varepsilon -f,u-u_\varepsilon\rangle\geq -\varepsilon C_\theta\int_\Omega f^- ,\] which added to the inequality of the original obstacle problem \eqref{ObsProb} with $v=u_\varepsilon\in\mathbb{K}^s_\psi$, by recalling \ref{LgCoerciveEq}, implies
 
\begin{align*}\varepsilon C_\theta\int_\Omega f^- &\geq\langle -\Delta_p^su_\varepsilon- (-\Delta_p^s)u,u_\varepsilon-u\rangle
\geq 
\begin{cases} 2^{1-p}[u_\varepsilon-u]_{s,p}^p&\text{ if }p\geq2,\\
(p-1)2^{\frac{p^2-4p+2}{p}}\dfrac{[u_\varepsilon-u]_{s,p}^2}{\left([u_\varepsilon]_{s,p}+[u]_{s,p}\right)^{2-p}}&\text{ if }1<p<2.
\end{cases}\end{align*} 
In order to complete the proof in the second case $1<p<2$ , since $u_\varepsilon$ solves \eqref{PenalProb}, taking the test function to be $u_\varepsilon$ and applying H\"older's inequality, we have that  
\begin{equation}\label{Eq11}(p-1)2^{-\frac{p^2-4p+2}{p}}[u_\varepsilon]^{p-1}_{s,p}\leq C_{SP}\norm{f^+}_{L^{p^\natural_s}(\Omega)}\end{equation} since $0\leq\theta_\varepsilon\leq1$, where $p^\natural_s=(p^*_s)'$ is the conjugate fractional Sobolev exponent with Sobolev-Poincar\'e constant $C_{SP}$ for $p^*_s$. 

\end{proof}

\begin{remark}
    
The results of Theorem \ref{PenalProbEstThm} are still valid in the $s=1$ case, by replacing \eqref{PenalProbEst} with the estimate
\begin{equation}\label{1-PenalProbEst}\norm{\nabla( u_\varepsilon-u)}_{L^{p}(\Omega)}\leq  \begin{cases}C_p \varepsilon^{1/p} &\text{ for }p\geq2,\\
C_p \varepsilon^{1/2} &\text{ for }1<p<2\end{cases}\end{equation}
with the same constant $C_p$ given by \eqref{Cp}, where $C_{SP}$ in the case $1<p<2$ is the Sobolev constant for $W^{1,p}_0(\Omega)$. This is the result of Theorem 2 of \cite{JFR2005j1},  but the estimates \eqref{PenalProbEst} for the nonlocal problems are new in the case $p\neq 2$, as the case $p=2$ is Theorem 4.1 in \cite{FracObsRiesz}. Moreover, while the constant $C_p$ is independent of $s$ for $p\geq 2$, in the case $1<p<2$ it may be replaced by a $C_p'>0$, also independent of $s>s_0>0$ if $f^+\in L^{p'}(\Omega)$. Indeed, using Poincaré's inequality, we may then replace the right hand side of \eqref{Eq11} by $c_P\norm{f^+}_{L^{p'}(\Omega)}$ in order to replace $C_p$ by $C_p'$ in \eqref{PenalProbEst}:
\begin{equation}\label{PenalProbEstIndepS}C_p'=\begin{cases}2^{-2/p}C_\theta^{1/p}\norm{ f^- }^{1/p}_{L^1(\Omega)} &\text{ for }p\geq2,\\
\frac{C_\theta^{1/2}}{(p-1)^{1/(2p-2)}}2^{\frac{-p^2-2p+4}{2p}}\norm{ f^- }_{L^1(\Omega)}^{1/2}\left(c_P\norm{f^+}_{L^{p'}(\Omega)}^{1/(p-1)}\right)^{1-p/2}&\text{ for }1<p<2.\end{cases}\end{equation} 
\end{remark}

\begin{remark}\label{rem:NonzeroObstacle}
    We can in fact adapt also the penalised problem 
    \eqref{PenalProb} for the obstacle problem with the convex set \eqref{ConvexSetDef} replaced by
    \begin{equation}\label{ConvexSetPsi}\mathbb{K}^s_\psi:=\{v\in W^{s,p}_0(\Omega):v\geq \psi \text{ a.e. in }\Omega \}\end{equation}
    with a nonzero obstacle $\psi\in W^{s,p}(\mathbb{R}^n)$, such that $\psi\leq 0$ in $\mathbb{R}^n\backslash\Omega$ and $-\Delta^s_p\psi\in L^{p^\natural_s}(\Omega)$. By replacing $f^+$ 
     with $f+\zeta$ for any $f, \zeta\in L^{p^\natural_s}(\Omega)$, such that $\zeta\geq(-\Delta^s_p\psi-f)^+\in L^{p^\natural_s}(\Omega)$, then the convergence of $u_\varepsilon\to u$ in $W^{s,p}_0(\Omega)$ holds with the same estimate \eqref{PenalProbEst},
 where now $C_p$ is given by
    \[C_p=\begin{cases}2^{-2/p}C_\theta^{1/p}\norm{\zeta}^{1/p}_{L^1(\Omega)} &\text{ for }p\geq2,\\
    \frac{C_\theta^{1/2}}{(p-1)^{1/(2p-2)}}2^{\frac{-p^2-2p+4}{2p}}\norm{\zeta}_{L^1(\Omega)}^{1/2}\left(C_{SP}\norm{f+\zeta}_{L^{p^\natural_s}(\Omega)}^{1/(p-1)}\right)^{1-p/2}&\text{ for }1<p<2.\end{cases}\]

\end{remark}

As a consequence of this approximation, we can show in the next theorem that in fact the solution of the obstacle problem \eqref{ObsProb} is a solution of a nonlinear fractional Dirichlet problem with a bounded second hand term involving a quasi-characteristic function $\vartheta^s=\vartheta^s(u^s)$ (see \eqref{ThetaLimit}):
\begin{equation}\label{EllipticProbEq}-\Delta_p^su^s=f^+-f^- \vartheta^s\quad\text{ in }\Omega,\quad\quad u^s=0\quad\text{ in }\mathbb{R}^n\backslash\Omega.\end{equation}

\begin{theorem}\label{LocalRegEllipNonlin1}
Let $0<s<1$, $1<p<\infty$ and $\Omega\subset\mathbb{R}^n$ be a bounded Lipschitz domain. Let $u^s$ denote the solution of the obstacle problem \eqref{ObsProb} with $f\in L^\infty(\Omega)$. 
Then, the unique solution to the nonlocal $p$-obstacle problem \eqref{ObsProb} is such that $u^s\in W^{s,p}_0(\Omega)\cap C^\alpha(\bar{\Omega})$, i.e.  $u^s$ is globally H\"older continuous in $\Omega$, for some $0<\alpha<1$ depending on $n$, $p$ and $s$, and satisfies \eqref{EllipticProbEq} where $\vartheta^s=\vartheta^s(u^s)$ is such that
\begin{equation}\label{ThetaLimit}
    0\leq \chi_{\{u^s>0\}} \leq \vartheta^s\leq1 \text{ a.e. in }\Omega.
\end{equation} 

Moreover, for a fixed $0<s_0<1$, $u^s\in W^{s,p}_0(\Omega)\cap C^{\beta}_{loc}(\Omega)$ uniformly in $s$, $s_0\leq s<1$, i.e.  $u^s$ is locally H\"older continuous in $\Omega$, for some $0<\beta\leq s_0$ depending only on $n$ and $p$. This means that for every open subset $\omega\Subset\Omega$ there exists a constant $C_\omega>0$ depending only on $n$, $p$ and $\norm{f}_{L^\infty(\Omega)}$ and independent of $s$, such that the solution $u^s$ of the nonlocal $p$-obstacle problem \eqref{ObsProb} satisfies
    \begin{equation}
    u^s\in C^\beta(\bar{\omega})\quad\text{ with }\quad  \norm{u^s}_{C^\beta(\bar{\omega})}\leq C_\omega.
    \end{equation}
\end{theorem}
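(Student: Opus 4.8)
The plan is to obtain \eqref{EllipticProbEq} together with the stated regularity by passing to the limit $\varepsilon\to0$ in the bounded penalisation \eqref{PenalProb}, using the robust Hölder estimates of Section 2. First I would note that the penalised solution $u_\varepsilon$ solves the semilinear Dirichlet problem $-\Delta_p^su_\varepsilon=F_\varepsilon(x,u_\varepsilon)$ in $\Omega$, $u_\varepsilon=0$ in $\mathbb{R}^n\backslash\Omega$, with $F_\varepsilon(x,z)=f^+(x)-f^-(x)\theta_\varepsilon(z)$; this $F_\varepsilon$ is Carathéodory, non-increasing in $z$ (since $\theta_\varepsilon$ is non-decreasing and $f^-\geq0$), and satisfies $|F_\varepsilon(x,z)|\leq|f(x)|$, i.e.\ \eqref{FGrowth} with $C=0$ and $f\in L^\infty(\Omega)$. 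Hence Theorem \ref{SemiLinearRegThm} applies to $u_\varepsilon$: for a fixed $s_0\leq s<1$ and every $\omega\Subset\Omega$ one gets $\norm{u_\varepsilon}_{C^\beta(\bar\omega)}\leq C_\omega$ with $0<\beta\leq s_0$ and $C_\omega$ depending only on $n$, $p$ and $\norm{f}_{L^\infty(\Omega)}$ --- crucially independent of $\varepsilon$ and of $s\in[s_0,1)$ --- and, when $\partial\Omega\in C^{0,1}$, also $u_\varepsilon\in C^\alpha(\bar\Omega)$.

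Next I would combine Theorem \ref{PenalProbEstThm}, which gives $u_\varepsilon\to u^s$ strongly in $W^{s,p}_0(\Omega)$ (hence, along a subsequence, a.e.\ in $\Omega$ by Lemma \ref{Sobolev}), with Ascoli--Arzelà applied to the uniform estimates above: this upgrades the convergence to $u_\varepsilon\to u^s$ uniformly on every $\bar\omega\Subset\Omega$ and on $\bar\Omega$, so $u^s$ inherits the bounds, giving $u^s\in C^\alpha(\bar\Omega)$ and $u^s\in C^\beta(\bar\omega)$ with $\norm{u^s}_{C^\beta(\bar\omega)}\leq C_\omega$ independent of $s\in[s_0,1)$. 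Since $0\leq\theta_\varepsilon(u_\varepsilon)\leq1$, weak-$*$ compactness in $L^\infty(\Omega)$ yields a further subsequence with $\theta_\varepsilon(u_\varepsilon)\rightharpoonup\vartheta^s$ in $L^\infty(\Omega)$-weak$^*$ and $0\leq\vartheta^s\leq1$. Passing to the limit in \eqref{PenalProb} --- the left side converging because $-\Delta_p^s\colon W^{s,p}_0(\Omega)\to W^{-s,p'}(\Omega)$ is continuous (standard, from the inequalities for $a\mapsto|a|^{p-2}a$ used in Proposition \ref{LgCoercive}), the right side because $f^-\theta_\varepsilon(u_\varepsilon)\rightharpoonup f^-\vartheta^s$ weak-$*$ and test functions lie in $L^{p^*_s}(\Omega)\subset L^1(\Omega)$ --- gives $\langle-\Delta_p^su^s,v\rangle=\int_\Omega(f^+-f^-\vartheta^s)v$ for all $v\in W^{s,p}_0(\Omega)$; as $f^+-f^-\vartheta^s\in L^\infty(\Omega)\subset L^{p^\natural_s}(\Omega)$, this is exactly \eqref{EllipticProbEq} a.e.\ in $\Omega$.

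It then remains to prove the lower bound $\chi_{\{u^s>0\}}\leq\vartheta^s$ in \eqref{ThetaLimit}. Since $u^s$ is continuous, $\{u^s>0\}$ is open; for $x_0$ in it I would choose a ball $B$ with $\bar B\Subset\{u^s>0\}$, so $u^s\geq c>0$ on $B$. By the uniform convergence on $\bar B$, for $\varepsilon$ small $u_\varepsilon\geq c/2$ on $B$, hence $\theta_\varepsilon(u_\varepsilon)=\theta(u_\varepsilon/\varepsilon)\geq\theta\bigl(c/(2\varepsilon)\bigr)\to1$; so $\theta_\varepsilon(u_\varepsilon)\to1$ uniformly on $B$. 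Covering $\{u^s>0\}$ by such balls gives $\theta_\varepsilon(u_\varepsilon)\to1$ a.e.\ there, and for every $\phi\in L^1(\Omega)$ supported in $\{u^s>0\}$ the dominated convergence theorem combined with the weak-$*$ convergence gives $\int_{\{u^s>0\}}\vartheta^s\phi=\int_{\{u^s>0\}}\phi$; therefore $\vartheta^s=1$ a.e.\ on $\{u^s>0\}$, which is \eqref{ThetaLimit}.

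The main obstacle I anticipate is the passage from the $W^{s,p}_0(\Omega)$-convergence of Theorem \ref{PenalProbEstThm} to uniform convergence on compacts, on which the last step entirely rests: this requires that the Hölder constant $C_\omega$ in Theorem \ref{SemiLinearRegThm} be genuinely robust, i.e.\ depend on the nonlinearity $F_\varepsilon$ only through its $L^\infty$-growth bound and not otherwise on $\varepsilon$ or $s$. The remaining points --- continuity of $-\Delta_p^s$ on $W^{s,p}_0(\Omega)$ and the weak-$*$ limit manipulations --- are routine.
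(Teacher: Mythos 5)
Your proposal is correct and follows essentially the same route as the paper: both apply the global and robust local H\"older estimates of Theorem \ref{SemiLinearRegThm} to the penalised problems \eqref{PenalProb}, then pass to the limit $\varepsilon\to0$ via Arzel\`a--Ascoli together with weak-$*$ compactness of $\theta_\varepsilon(u_\varepsilon)$ in $L^\infty(\Omega)$ to obtain \eqref{EllipticProbEq} and \eqref{ThetaLimit}. Your explicit argument that $\vartheta^s=1$ a.e.\ on the open set $\{u^s>0\}$ (via uniform convergence on compact subsets and $\theta(+\infty)=1$) merely spells out a step the paper leaves implicit in the phrase ``by the definition of $\theta_\varepsilon$ and the pointwise convergence of $u_\varepsilon$''.
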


\begin{proof}
    In fact we can apply the global H\"older estimates recalled in Theorem \ref{SemiLinearRegThm}
    to the solution $u^s_\varepsilon\in W^{s,p}_0(\Omega)$ of the nonlocal elliptic Dirichlet $p$-Laplacian problem \begin{equation}\label{EllipRegEqPenal}-\Delta_p^su^s_\varepsilon=f^+-f^- \theta_\varepsilon (u_\varepsilon^s)\quad\text{ in }\Omega,\quad\quad u^s_\varepsilon=0\quad\text{ in }\mathbb{R}^n\backslash\Omega,\end{equation} 
    in order to deduce \[\norm{u^s_\varepsilon}_{C^\alpha(\bar{\Omega})}\leq C_\Omega\] independently of $\varepsilon$. Then we let $\varepsilon\to 0$, and by the Arzela-Ascoli theorem,  $u^s_\varepsilon$ converges uniformly to $u^s$ and we obtain \[\norm{u^s}_{C^\alpha(\bar{\Omega})}\leq C_\Omega.\] Taking a weak-* limit in $L^{\infty}(\Omega)$ of $\theta_\varepsilon (u_\varepsilon)$, for a subsequence still denoted by $\varepsilon\to 0$\, i.e.\[\theta_\varepsilon (u_\varepsilon)\rightharpoonup \vartheta^s=\vartheta^s(u^s)\] by the definition of $\theta_\varepsilon$ and the pointwise convergence of $u_\varepsilon$, we conclude $\vartheta^s$ satisfies \eqref{ThetaLimit} at the limit.

    For the second result, we instead apply the robust local estimates of Theorem \ref{SemiLinearRegThm} to the solution $u^s_\varepsilon\in W^{s,p}_0(\Omega)$ of the nonlocal elliptic Dirichlet $p$-Laplacian problem \eqref{EllipRegEqPenal} to deduce \eqref{Robust} for $u^s_\varepsilon$ independently of $s$ and $\varepsilon$.

    
    
\end{proof}

Consequently, recalling that $0\leq \theta_{\varepsilon}\leq 1$, we directly obtain from the bounded penalised problem the Lewy-Stampacchia inequalities for the fractional obstacle problem (see  \cite{GMosconi} and Lemma 2.5 of \cite{IannizzottoMosconiSquassina2020JFA_FracPLapTMonotoneLewyStampacchia}).

\begin{remark}\label{LewyStampacchia}
Then, the unique solution $u^s$ to the obstacle problem \eqref{ObsProb} with non zero obstacle $\psi$, i.e. in $\mathbb{K}^s_\psi$ given by \eqref{ConvexSetPsi}, satisfies \begin{equation}\label{ObsPbH-sEqLewyStamp}f\leq -\Delta_p^su^s\leq f+(-\Delta^s_p\psi-f)^+=-\Delta^s_p\psi \vee f \quad \text{ a.e. in }\Omega.\end{equation}
Consequently, if $(-\Delta_p^s\psi-f)^+\in L^\infty(\Omega)$ and $f\in L^\infty(\Omega)$, then $-\Delta_p^su^s\in L^\infty(\Omega)$ and we can then write \eqref{ObsProb} in the form 
\[u^s\in\mathbb{K}^s_\psi:\quad\int_\Omega(-\Delta^s_pu^s-f)(v-u)\geq0\quad\forall v\in\mathbb{K}^s_\psi.\]

\end{remark}

\section{Stability of the Obstacle Problem as $s\to\sigma\leq 1$}\label{sect:NonlocalLgSConvg}

Given the nonlinear nonlocal obstacle problem for $0<s<1$ as well as the classical nonlinear obstacle problem for $s=1$, we consider now the behaviour as $s\to\sigma\leq 1$. This follows and extends to the case $\sigma<1$ the approach of \cite{BonderSalort2020StabilityNonlocal}, adapted to the case of the obstacle problem. 

We start by generalising Lemma \ref{BonderSalortConvgLemma}.
\begin{lemma}\label{BonderSalortConvgResult}
    For $0<s_k<1$, let $s_k\to\sigma \leq 1$ and $v_k\in W^{s_k,p}_0(\Omega)$ be such that $\sup_{k\in\mathbb{N}}[v_k]_{s_k,p}^p<\infty$ and $v_k\to v$ strongly in $L^p(\Omega)$. Then, 
    \[\langle-\Delta_p^{s_k}u,v_k\rangle\to\langle-\Delta_p^\sigma u,v\rangle\quad \text{ for every }u\in W^{1,p}_0(\Omega).\]
\end{lemma}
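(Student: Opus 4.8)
The plan is to split the duality pairing into the "fractional" contribution for $0<s_k<1$ and a limiting contribution, and treat the cases $\sigma<1$ and $\sigma=1$ in a parallel way, using Lemma \ref{BonderSalortConvgLemma} only for the delicate case $\sigma=1$. Fix $u\in W^{1,p}_0(\Omega)\subset W^{s,p}_0(\Omega)$ for every $0<s\leq1$. Write
\[
\langle-\Delta_p^{s_k}u,v_k\rangle=\kappa_{s_k}\iint_{\mathbb{R}^n\times\mathbb{R}^n}\frac{|u(x)-u(y)|^{p-2}(u(x)-u(y))\bigl((v_k(x)-v_k(y)\bigr)}{|x-y|^{n+s_kp}}\,dx\,dy .
\]
The function $U(x,y):=|u(x)-u(y)|^{p-2}(u(x)-u(y))$ satisfies, thanks to $u\in W^{1,p}_0(\Omega)$ and the Bourgain–Brezis–Mironescu machinery, a uniform (in $s$ near $1$) bound on the weighted norm $\kappa_s\iint |U|^{p'}|x-y|^{-n-sp}<\infty$; more precisely the measures $d\mu_{s_k}:=\kappa_{s_k}|x-y|^{-n-s_kp}\,dx\,dy$ are such that $U\in L^{p'}(d\mu_{s_k})$ uniformly. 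Combined with $\sup_k[v_k]_{s_k,p}^p=\sup_k\|v_k(x)-v_k(y)\|_{L^p(d\mu_{s_k})}^p<\infty$, Hölder's inequality shows the pairings are bounded, so it suffices to identify the limit.

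For the case $\sigma<1$ the argument is softer: since $s_k\to\sigma\in(0,1)$, the kernels $|x-y|^{-n-s_kp}$ converge pointwise (and locally uniformly away from the diagonal) to $|x-y|^{-n-\sigma p}$, and $\kappa_{s_k}\to\kappa_\sigma=(1-\sigma)C_{n,p}/2$. First I would establish that $v_k\to v$ in $L^p(\Omega)$ together with $\sup_k[v_k]_{s_k,p}<\infty$ forces $v\in W^{\sigma,p}_0(\Omega)$ and, after passing to a subsequence, $v_k(x)-v_k(y)\rightharpoonup v(x)-v(y)$ weakly in $L^p(d\mu_\sigma)$ — here one uses that for $s_k$ slightly above $\sigma$ one has the continuous embedding controlling the $\mu_\sigma$-integral by $[v_k]_{s_k,p}$, up to a constant bounded in $k$, via the monotonicity of $s\mapsto (1-s)|x-y|^{-n-sp}$ type estimates or simply Fatou on the doubled domain. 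Then $U$, being a fixed function lying in $L^{p'}(d\mu_\sigma)$, pairs continuously against this weak limit, giving $\langle-\Delta_p^{s_k}u,v_k\rangle\to\kappa_\sigma\iint U\cdot(v(x)-v(y))\,|x-y|^{-n-\sigma p}=\langle-\Delta_p^\sigma u,v\rangle$; uniqueness of the limit removes the subsequence.

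For the case $\sigma=1$ I would lean directly on Lemma \ref{BonderSalortConvgLemma}: for fixed $u\in W^{1,p}_0(\Omega)$ and the sequence $v_k\in W^{s_k,p}_0(\Omega)$ (with $C\geq\sup_k[v_k]_{s_k,p}$) it gives, for every $\tau>0$,
\[
\mathcal{J}_{s_k,p}(u+\tau v_k)-\mathcal{J}_{s_k,p}(u)=\tau\langle-\Delta_p^{s_k}u,v_k\rangle+o(\tau),
\]
with the $o(\tau)$ uniform in $k$. By the $\Gamma$-convergence recalled after \eqref{JFuncDef} (the BBM theorem $[w]_{s,p}^p\to\|\nabla w\|_{L^p(\Omega)}^p$ applied to $w=u$ and to $w=u+\tau v$ along a fixed $v\in W^{1,p}_0(\Omega)$), together with the strong $L^p$-convergence $v_k\to v$ which one must first upgrade to show $v\in W^{1,p}_0(\Omega)$ — this is exactly the content of Theorem 3.3-type lower-semicontinuity in \cite{BonderSalort2020StabilityNonlocal}, $v\in W^{1,p}_0(\Omega)$ with $\|\nabla v\|_{L^p}^p\leq\liminf[v_k]_{s_k,p}^p$ — one passes to the limit $k\to\infty$ in $\mathcal{J}_{s_k,p}(u+\tau v_k)$. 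The subtle point is that $\mathcal{J}_{s_k,p}(u+\tau v_k)\to\mathcal{J}_{1,p}(u+\tau v)=\frac1p\|\nabla(u+\tau v)\|_{L^p}^p$ is not automatic from $\Gamma$-convergence alone (which only gives $\liminf$); here I would exploit that $u$ is smooth-enough (fixed $W^{1,p}_0$ function) so that $\mathcal{J}_{s_k,p}(u+\tau v_k)=\mathcal{J}_{s_k,p}(u)+\tau\langle-\Delta_p^{s_k}u,v_k\rangle+o(\tau)$ already isolates the pairing, and $\mathcal{J}_{s_k,p}(u)\to\mathcal{J}_{1,p}(u)$ by BBM, reducing everything to showing $\limsup_k\langle-\Delta_p^{s_k}u,v_k\rangle\leq\langle-\Delta_pu,v\rangle$ and the reverse with $\liminf$; applying the asymptotic expansion to $\pm v_k$ and using convexity/monotonicity of $\mathcal{J}$ pins both inequalities.

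The main obstacle I expect is precisely this last identification in the $\sigma=1$ case: controlling $\mathcal{J}_{s_k,p}(u+\tau v_k)$ from above (a "recovery-sequence" type bound) when $v_k$ is not a fixed function but only weakly/strongly $L^p$-convergent with bounded Gagliardo energy. I would handle it by the two-sided use of Lemma \ref{BonderSalortConvgLemma} (with test perturbations $\tau v_k$ and $-\tau v_k$, $\tau\downarrow 0$) combined with the elementary inequality $\langle-\Delta_p^{s_k}u, v_k\rangle\le \tfrac1\tau(\mathcal J_{s_k,p}(u+\tau v_k)-\mathcal J_{s_k,p}(u)) + \tfrac{o(\tau)}\tau$ and its mirror, then taking $k\to\infty$ using $\Gamma$-$\liminf$ on the right, and finally $\tau\to 0^+$; the $\sigma<1$ case, by contrast, is a routine dominated/weak-convergence argument once the uniform $L^{p'}(d\mu_\sigma)$ membership of $U$ and the weak compactness of $\{v_k(x)-v_k(y)\}$ in $L^p(d\mu_\sigma)$ are in place.
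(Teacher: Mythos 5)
Your treatment of the case $\sigma=1$ is essentially the paper's: there the paper simply invokes Lemma 2.8 of \cite{BonderSalort2020StabilityNonlocal}, whose content is exactly the two-sided use of Lemma \ref{BonderSalortConvgLemma} together with the BBM-type $\liminf$ that you describe, so that half of your proposal is sound.

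The gap is in the case $\sigma<1$, which is precisely where the paper does new work. Your key step --- ``after passing to a subsequence, $v_k(x)-v_k(y)\rightharpoonup v(x)-v(y)$ weakly in $L^p(d\mu_\sigma)$'' --- is unjustified and in general false. Nothing forces $s_k\geq\sigma$: if $s_k\nearrow\sigma$ from below, a function $v_k\in W^{s_k,p}_0(\Omega)$ need not belong to $W^{\sigma,p}(\mathbb{R}^n)$ at all, and $\sup_k[v_k]_{s_k,p}<\infty$ gives no bound on $\iint|v_k(x)-v_k(y)|^p|x-y|^{-n-\sigma p}\,dx\,dy$, since near the diagonal the kernel $|x-y|^{-n-\sigma p}$ dominates $|x-y|^{-n-s_kp}$ when $s_k<\sigma$; the monotonicity you invoke goes the wrong way, and Fatou only controls the limit $v$, not the sequence. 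A second issue persists even when $s_k>\sigma$: the $k$-th pairing is taken against the varying measure $d\mu_{s_k}$, so weak convergence with respect to the fixed measure $d\mu_\sigma$ does not by itself pass to the limit. A correct version of your ``weak--strong'' route must split the kernel between the two factors and work in the unweighted spaces on $\mathbb{R}^n\times\mathbb{R}^n$: the $u$-part $U\,\kappa_{s_k}^{1/p'}|x-y|^{-(n+s_kp)/p'}$ converges strongly in $L^{p'}(\mathbb{R}^n\times\mathbb{R}^n)$ (its norm to the power $p'$ is a multiple of $[u]_{s_k,p}^p$, and $u\in W^{1,p}_0(\Omega)\subset W^{s',p}(\mathbb{R}^n)$ with $s'=\sup_k s_k<1$ permits domination as in \eqref{SemiNormContinuity}), while the $v_k$-part $\kappa_{s_k}^{1/p}(v_k(x)-v_k(y))|x-y|^{-(n+s_kp)/p}$ is bounded in $L^p(\mathbb{R}^n\times\mathbb{R}^n)$ and its weak limit must be identified via a.e.\ convergence. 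As written, your argument skips both points. The paper instead avoids any compactness on the $v_k$-differences by running, for $\sigma<1$, exactly the convexity scheme you reserve for $\sigma=1$: continuity of $s\mapsto[w]_{s,p}$ for fixed $w$ (estimate \eqref{SemiNormContinuity}), Fatou to get $\mathcal{J}_{\sigma,p}(u+\tau v)\leq\liminf_k\mathcal{J}_{s_k,p}(u+\tau v_k)$, then Lemma \ref{BonderSalortConvgLemma} applied two-sidedly, dividing by $\tau$ and letting $\tau\to0$. Either repair your pairing argument as indicated, or note that your $\sigma=1$ scheme applies verbatim for $\sigma<1$.
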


\begin{proof}
    The case where $\sigma=1$ and $s_k\nearrow1$ is given in Lemma 2.8 of \cite{BonderSalort2020StabilityNonlocal}.

    It remains to show the case $\sigma<1$. We first show that the semi-norm $[\cdot]_{s,p}$ is continuous with respect to $s$, following the proof given in Proposition 2.1 of \cite{HitchhikerGuide} for the case where the $W^{s,p}$-norm is instead defined with the double integrals in $\Omega$. We first decompose the semi-norm into two parts
    \begin{multline*}
    [u]_{s,p}^p= \int_{\mathbb{R}^n}\int_{\mathbb{R}^n} \frac{|u(x)-u(y)|^p}{|x-y|^{n+sp}} \,dx\,dy \\\leq \int_{\mathbb{R}^n}\int_{\mathbb{R}^n\,\cap\,\{|x-y|\geq 1\}}\frac{|u(x)-u(y)|^p}{|x-y|^{n+sp}} \,dx\,dy + \int_{\mathbb{R}^n}\int_{\mathbb{R}^n\,\cap\,\{|x-y|< 1\}}\frac{|u(x)-u(y)|^p}{|x-y|^{n+sp}} \,dx\,dy = I_1+ I_2.
    \end{multline*}
    For $I_1$, observe that 
    \begin{align*}
    \int_{\mathbb{R}^n}\int_{\mathbb{R}^n \cap \{|x-y|\geq 1\}} \frac{|u(x)|^p}{|x-y|^{n+sp}} \,dx\,dy  &\leq  \int_{\mathbb{R}^n} \left( \int_{|z|\geq 1} \frac{1}{|z|^{n+sp}}\,dz  \right) |u(x)|^p\,dx\\
    &\leq  C(n,s,p) \|u\|^p_{L^p(\Omega)}\,,
    \end{align*}
    since the kernel $1/|z|^{n+sp}$ is integrable for $n+sp>n$. Therefore,
    \begin{equation}
    I_1 \leq 2^{p-1}\int_{\mathbb{R}^n}\int_{\mathbb{R}^n\,\cap\{|x-y|\geq 1\}}\frac{|u(x)|^p+|u(y)|^p}{|x-y|^{n+sp}} \,dx\,dy \leq 2^{p} C(n,s,p) \|u\|^p_{L^p(\Omega)}.
    \end{equation}

    On the other hand, for $s'>s$, we have
    \[
    I_2 \leq  \int_{\mathbb{R}^n}\int_{\mathbb{R}^n\cap\,\{|x-y|< 1\}}\frac{|u(x)-u(y)|^p}{|x-y|^{n+s'p}} \,dx\,dy.
    \]

Therefore, 
\begin{equation}\label{SemiNormContinuity}
    [u]^p_{s,p} \leq 2^{p} C(n,s,p) \|u\|^p_{L^p(\Omega)}+ \int_{\mathbb{R}^n}\int_{\mathbb{R}^n} \frac{|u(x)-u(y)|^p}{|x-y|^{n+s'p}} \,dx\,dy \leq C'(n,s,p) [u]^p_{s',p}.
\end{equation}

Now for any sequence $\{s_k\}_k \to s$, let $s':=\sup_k s_k$. Then, from the above estimates, the integrands in the norms $[u]_{s_k,p}^p$ are dominated by the integrands in $[u]_{s',p}^p$, so we can apply the Lebesgue dominated convergence theorem to obtain the continuity of $s \mapsto [u]_{s,p}$ for any $u\in W^{s',p}(\Omega)$ with $s'>s$.

On the other hand, by the assumptions on $v_k$, we can apply Fatou's lemma to obtain 
\[\liminf_k[v_k]_{s_k,p}^p= \liminf_k\int_{\mathbb{R}^n}\int_{\mathbb{R}^n} \frac{|v_k(x)-v_k(y)|^p}{|x-y|^{n+s_kp}} \,dx\,dy\geq \int_{\mathbb{R}^n}\int_{\mathbb{R}^n} \liminf_k\frac{|v_k(x)-v_k(y)|^p}{|x-y|^{n+s_kp}} \,dx\,dy=[v]_{\sigma,p}^p,\] so $v\in W^{\sigma,p}_0(\Omega)$.

By the continuity of the $W^{s,p}$-norms, which corresponds to the continuity of the functionals $\mathcal{J}_{s,p}$ by \eqref{JFuncDef}, we can extend the proof of the Lemma 2.8 of \cite{BonderSalort2020StabilityNonlocal}. Indeed, as in the case $\sigma=1$, it suffices to show that 
\begin{equation} \label{desig.1}
\langle -\Delta_p^\sigma u, v\rangle  \leq    \liminf_{k\to\infty} \langle -\Delta_p^{s_k} u, v_k\rangle \quad \text{ for every } u\in W^{1,p}_0(\Omega)\subset W^{s,p}(\Omega) \text{ for every }s<1,
\end{equation} since we can consider $-u$ to get the reverse inequality. Now, applying Fatou's lemma again, we have that 
\[\mathcal{J}_{\sigma,p}(u+\tau v)\leq \liminf_{k\to\infty} \mathcal{J}_{s_k,p}(u+\tau v_k).\] Combining this with the continuity of the functionals $\mathcal{J}_{s,p}$ in \eqref{SemiNormContinuity} then gives 
\[\mathcal{J}_{\sigma,p}(u+\tau v) - \mathcal{J}_{\sigma,p}(u)\leq \liminf_{k\to\infty} \left(\mathcal{J}_{s_k,p}(u+\tau v_k) - \mathcal{J}_{s_k,p}(u)\right).\] Then, the result follows simply from applying Lemma \ref{BonderSalortConvgLemma}, dividing by $\tau>0$ and letting $\tau\to0$.
\end{proof}

Consequently, we have the equivalent of Theorem \ref{SemiLinearConvgThm} for any $\sigma$, $0<\sigma<1$, as a corollary, which proof is similar.

\begin{corollary}\label{SemiLinearConvgCor}
    Suppose that the Carathéodory function $F$ is non-increasing in $z$ and satisfies \eqref{FGrowth} with $f\in L^{p'}(\Omega)$. For $0<s_0<\sigma<1$, if $s\to\sigma$ then $u^s\to u^\sigma$ in $W^{r,p}_0(\Omega)$, for any $0\leq r<\sigma$, where $u^s\in W^{s,p}_0(\Omega)$ and $u^\sigma\in W^{\sigma,p}_0(\Omega)$ are the solutions to \eqref{DirichletExistEq} corresponding to $s$ and $\sigma$ respectively.

\end{corollary}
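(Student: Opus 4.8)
The plan is to follow the proof of Theorem~\ref{SemiLinearConvgThm} closely, replacing the asymptotic expansion Lemma~\ref{BonderSalortConvgLemma}, which drives the limit $\sigma=1$ via Theorem~3.3 of \cite{BonderSalort2020StabilityNonlocal}, by its generalisation Lemma~\ref{BonderSalortConvgResult}, valid for all $0<\sigma\le1$. First I would record the $s$-uniform a priori bound: testing the variational formulation \eqref{DirichletVarEq} of \eqref{DirichletExistEq} with $v=u^s$, using the monotonicity $F(x,u^s)u^s\le F(x,0)u^s$ and the Poincar\'e inequality of Lemma~\ref{Poincare} (whose constant $c_P$ is uniform for $s\ge s_0$), gives, exactly as in \eqref{DirichletConvgEq},
\[[u^s]_{s,p}\le\bigl(2c_P\norm{f}_{L^{p'}(\Omega)}\bigr)^{\frac{1}{p-1}}=:C_1,\]
a bound independent of $s$ for $s_0<s<1$. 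Combining this with the continuity estimate \eqref{SemiNormContinuity}, for any $s'<\sigma$ the tail of a sequence $s_n\to\sigma$ satisfies $\sup_{s_n>s'}[u^{s_n}]_{s',p}<\infty$, so $\{u^{s_n}\}$ is eventually bounded in the fixed space $W^{s',p}_0(\Omega)$.

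By the compact embedding $W^{s',p}_0(\Omega)\Subset L^p(\Omega)$ of Lemma~\ref{Sobolev}, a subsequence converges strongly in $L^p(\Omega)$ to some $u$, and the Fatou argument used in the proof of Lemma~\ref{BonderSalortConvgResult} yields $u\in W^{\sigma,p}_0(\Omega)$ with $[u]_{\sigma,p}^p\le\liminf_n[u^{s_n}]_{s_n,p}^p$. To identify $u$ with $u^\sigma$ I would establish the $\sigma<1$ analogue of Theorem~3.3 of \cite{BonderSalort2020StabilityNonlocal} by Minty's trick: from the monotonicity of $-\Delta_p^{s_n}$,
\[\langle-\Delta_p^{s_n}w,w-u^{s_n}\rangle\ \ge\ \langle-\Delta_p^{s_n}u^{s_n},w-u^{s_n}\rangle=\int_\Omega F(x,u^{s_n})(w-u^{s_n})\,dx\]
for every $w\in W^{1,p}_0(\Omega)$. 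Here $w-u^{s_n}\to w-u$ in $L^p(\Omega)$ with $\sup_n[w-u^{s_n}]_{s_n,p}^p<\infty$ (by \eqref{SemiNormContinuity} and $C_1$), so Lemma~\ref{BonderSalortConvgResult} sends the left-hand side to $\langle-\Delta_p^\sigma w,w-u\rangle$; passing to a further a.e.-convergent subsequence and using the growth bound \eqref{FGrowth} together with the $L^q$-bounds for $q<p^*_{s'}$ coming from the $W^{s',p}_0$-bound sends the right-hand side to $\int_\Omega F(x,u)(w-u)\,dx$. Thus $\langle-\Delta_p^\sigma w,w-u\rangle\ge\int_\Omega F(x,u)(w-u)$ for all $w\in W^{1,p}_0(\Omega)$, hence, by density in $W^{\sigma,p}_0(\Omega)$ and continuity of $-\Delta_p^\sigma$ (noting $F(\cdot,u)\in W^{-\sigma,p'}(\Omega)$ by \eqref{FGrowth}), for all $w\in W^{\sigma,p}_0(\Omega)$; the substitution $w=u+\delta\varphi$, division by $\delta>0$, $\delta\downarrow0$, and $\varphi\mapsto-\varphi$ then give $-\Delta_p^\sigma u=F(x,u)$ in $\Omega$, so $u=u^\sigma$ by the uniqueness part of Theorem~\ref{DirichletExistThm}.

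Since $u^\sigma$ is the unique possible limit, the full family converges in $L^p(\Omega)$. Finally, to upgrade to $W^{r,p}_0(\Omega)$ for $0\le r<\sigma$, pick $s'$ with $\max(r,s_0)<s'<\sigma$; by the bound above together with \eqref{SemiNormContinuity} the family $\{u^s\}$ is eventually bounded in $W^{s',p}_0(\Omega)$, hence precompact in $W^{r,p}_0(\Omega)$ by the compact embedding $W^{s',p}_0(\Omega)\Subset W^{r,p}_0(\Omega)$; every cluster point must coincide with the $L^p$-limit $u^\sigma$, so $u^s\to u^\sigma$ strongly in $W^{r,p}_0(\Omega)$.

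The step I expect to be the main obstacle is the limit passage in the nonlinear term $\int_\Omega F(x,u^{s_n})(w-u^{s_n})\,dx$: since the growth exponent $p^\natural_s$ in \eqref{FGrowth} varies with $s$, one has to produce an equi-integrability (Vitali) argument, uniform for $s$ in a left neighbourhood of $\sigma$, out of the $s$-independent $W^{s',p}_0$-bound and the resulting $L^q$-bounds. Checking the hypothesis $\sup_k[v_k]_{s_k,p}^p<\infty$ of Lemma~\ref{BonderSalortConvgResult} for $v_k=w-u^{s_k}$ is routine, by the triangle inequality for $[\cdot]_{s_k,p}$ together with \eqref{SemiNormContinuity}, but must be included.
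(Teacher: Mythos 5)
Your proposal is correct and takes essentially the approach the paper intends: the paper's own justification of Corollary \ref{SemiLinearConvgCor} is simply that one repeats the proof of Theorem \ref{SemiLinearConvgThm} with Lemma \ref{BonderSalortConvgLemma} replaced by Lemma \ref{BonderSalortConvgResult}, and your write-up is precisely that adaptation --- the uniform bound \eqref{DirichletConvgEq}, compactness in $L^p(\Omega)$ via a fixed space $W^{s',p}_0(\Omega)$ with $s'<\sigma$, identification of the limit by the Minty-type argument driven by Lemma \ref{BonderSalortConvgResult} (which replaces the citation of Theorem 3.3 of \cite{BonderSalort2020StabilityNonlocal} used when $\sigma=1$ and mirrors the argument the paper itself employs in Proposition \ref{ConvgQ}), uniqueness from Theorem \ref{DirichletExistThm}, and the upgrade to $W^{r,p}_0(\Omega)$, $r<\sigma$, by the compact embedding. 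The equi-integrability point you flag for the nonlinear term is a feature of the growth hypothesis \eqref{FGrowth} itself rather than of your route, and is not treated in any more detail in the paper's argument either.
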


As a consequence, we have the equivalent of Theorem \ref{SemiLinearConvgThm} and Corollary \ref{SemiLinearConvgCor} for the fractional $p$-obstacle problem.

\begin{theorem}\label{LpConvgThm}
Let $u^s$, for $0<s<1$, be the solution to the nonlinear nonlocal obstacle problem \begin{equation}\label{LpObsProbConvgS}u^s\in \mathbb{K}^s:\quad \langle -\Delta_p^su^s-f,v-u^s\rangle\geq0\quad\forall v\in \mathbb{K}^s.\end{equation} Then, $u^s\to u^\sigma$ in $W^{r,p}_0(\Omega)$ as $s\to\sigma$, for any $0\leq r<\sigma$, for every $0<\sigma\leq 1$, where $u^\sigma\in \mathbb{K}^\sigma$ solves uniquely the obstacle problems for $s=\sigma$. In the case of $\sigma=1$, $u^1$ solves the classical obstacle problem \begin{equation}\label{ClassicalObsProbLg}u^1\in \mathbb{K}^1:\quad\langle -\Delta_p u^1 -f,v-u^1\rangle\geq0\quad\forall v\in \mathbb{K}^1,\end{equation} 
where 
\[\mathbb{K}^1=\{v\in W^{1§,p}_0(\Omega):v\geq 0 \text{ a.e. in }\Omega\}.\]
\end{theorem}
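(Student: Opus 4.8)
The plan is to mimic the proof of Theorem \ref{SemiLinearConvgThm}, replacing the variational equation by the variational inequality, and using Lemma \ref{BonderSalortConvgResult} as the key convergence tool. First I would derive a uniform bound on $[u^s]_{s,p}$ for $s_0\leq s<1$ (and on $\norm{\nabla u^1}_{L^p(\Omega)}$ when $\sigma=1$): testing \eqref{LpObsProbConvgS} with $v=0\in\mathbb{K}^s$ gives $\langle -\Delta_p^su^s,u^s\rangle\leq \langle f,u^s\rangle$, hence $[u^s]_{s,p}^p\leq \int_\Omega f u^s\leq \norm{f}_{L^{p'}(\Omega)}\norm{u^s}_{L^p(\Omega)}$, and by the Poincaré inequality (Lemma \ref{Poincare}, with constant uniform for $s\geq s_0$) this yields $[u^s]_{s,p}\leq (c_P^{1/p}\norm{f}_{L^{p'}(\Omega)})^{1/(p-1)}=:C_1$, uniformly in $s$. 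By the compact embedding $W^{s,p}_0(\Omega)\Subset L^p(\Omega)$ (Lemma \ref{Sobolev}; for a sequence $s_k\to\sigma$ one uses that all the $u^{s_k}$ lie in $W^{r,p}_0(\Omega)$ for fixed $r<\inf_k s_k$, together with \eqref{SemiNormContinuity}), along any sequence $s_k\to\sigma$ there is a subsequence and a limit $u\in L^p(\Omega)$ with $u^{s_k}\to u$ in $L^p(\Omega)$; by the Fatou argument of Lemma \ref{BonderSalortConvgResult} we get $u\in W^{\sigma,p}_0(\Omega)$ (resp. $u\in W^{1,p}_0(\Omega)$ when $\sigma=1$), and since $u^{s_k}\geq 0$ a.e. is preserved under $L^p$-convergence, $u\in\mathbb{K}^\sigma$.

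Next I would pass to the limit in the inequality. For a \emph{fixed} test function $v\in W^{1,p}_0(\Omega)$ with $v\geq 0$ — such functions are dense in $\mathbb{K}^\sigma$ for any $0<\sigma\leq 1$, and they lie in $W^{s,p}_0(\Omega)$ for every $s<1$ by \eqref{SemiNormContinuity} — I write \eqref{LpObsProbConvgS} as
\[
\langle -\Delta_p^{s_k}u^{s_k},v\rangle - \langle -\Delta_p^{s_k}u^{s_k},u^{s_k}\rangle \geq \langle f,v-u^{s_k}\rangle.
\]
The first term is not directly covered by Lemma \ref{BonderSalortConvgResult} because $u^{s_k}$ is not a fixed $W^{1,p}_0$ function; instead I use monotonicity of $-\Delta_p^{s_k}$ to bound it by $\langle -\Delta_p^{s_k}v,v-u^{s_k}\rangle+\langle -\Delta_p^{s_k}u^{s_k},v\rangle\ge\langle -\Delta_p^{s_k}v,v-u^{s_k}\rangle$ in the combination that appears, i.e. I rewrite the inequality in the equivalent Minty form
\[
\langle -\Delta_p^{s_k}v - f,\,v-u^{s_k}\rangle\geq 0\qquad\forall\,v\in\mathbb{K}^{s_k}.
\]
Now with $v\in W^{1,p}_0(\Omega)\cap\mathbb{K}^1$ fixed, $\langle -\Delta_p^{s_k}v,v-u^{s_k}\rangle\to\langle -\Delta_p^\sigma v,v-u\rangle$ by Lemma \ref{BonderSalortConvgResult} (applied with the fixed $W^{1,p}_0$ function $v$ and the sequence $v-u^{s_k}\to v-u$ in $L^p$, whose seminorms $[v-u^{s_k}]_{s_k,p}$ are bounded by $C_1+C(v)$ via \eqref{SemiNormContinuity}), while $\langle f,v-u^{s_k}\rangle\to\langle f,v-u\rangle$ since $u^{s_k}\to u$ in $L^p$. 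Hence $\langle -\Delta_p^\sigma v-f,v-u\rangle\geq 0$ for all such $v$, and by density and continuity of $-\Delta_p^\sigma$ this extends to all $v\in\mathbb{K}^\sigma$; the Minty lemma (using hemicontinuity of $-\Delta_p^\sigma$, with $w_\delta=u+\delta(v-u)$ as in the proof of Theorem \ref{PenalProbEstThm}) then gives that $u$ solves \eqref{LpObsProbConvgS} for $s=\sigma$, i.e. \eqref{ClassicalObsProbLg} when $\sigma=1$. Uniqueness of that solution (Proposition \ref{LgCoercive}) identifies $u=u^\sigma$ independently of the subsequence, so the whole family converges: $u^s\to u^\sigma$ in $L^p(\Omega)$.

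Finally I upgrade $L^p$-convergence to $W^{r,p}_0(\Omega)$-convergence for $0\leq r<\sigma$. Since $[u^s]_{s,p}\leq C_1$ uniformly, for fixed $r<\sigma$ and $s$ close to $\sigma$ the family $\{u^s\}$ is bounded in $W^{r,p}_0(\Omega)$ (again by \eqref{SemiNormContinuity}), so it converges weakly in $W^{r,p}_0(\Omega)$ to $u^\sigma$ along any sequence; the compact embedding $W^{s,p}_0(\Omega)\Subset W^{r,p}_0(\Omega)$ for $r<s$ then turns this into strong convergence in $W^{r,p}_0(\Omega)$, exactly as in the last line of the proof of Theorem \ref{SemiLinearConvgThm}. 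I expect the main obstacle to be the bookkeeping in passing to the limit in $\langle -\Delta_p^{s_k}v,v-u^{s_k}\rangle$: one must be careful that Lemma \ref{BonderSalortConvgResult} is applied with the \emph{fixed} $W^{1,p}_0$ argument in the first slot and the $L^p$-convergent, seminorm-bounded sequence in the second, which is precisely why rewriting \eqref{LpObsProbConvgS} in Minty form (moving the nonlinearity onto the fixed test function) is essential; the density of $W^{1,p}_0(\Omega)\cap\mathbb{K}^1$ in $\mathbb{K}^\sigma$ and the seminorm-continuity estimate \eqref{SemiNormContinuity} are the other two points requiring a little care.
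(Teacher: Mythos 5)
Your proposal is correct in outline but follows a genuinely different route from the paper. The paper never passes to the limit in the variational inequality itself: it goes through the bounded penalization, using that the error estimate \eqref{PenalProbEst} holds with the constant $C_p'$ of \eqref{PenalProbEstIndepS}, independent of $s$ and $\sigma$, and that the penalized semilinear problems \eqref{PenalProbS} are stable as $s\to\sigma$ by Theorem \ref{SemiLinearConvgThm} and Corollary \ref{SemiLinearConvgCor}; a three-term triangle inequality in the $[\cdot]_{r,p}$ seminorm then gives $u^s\to u^\sigma$ in $W^{r,p}_0(\Omega)$ directly. You instead work directly on \eqref{LpObsProbConvgS}: uniform energy bound, $L^p$-compactness, Minty reformulation with \emph{fixed} test functions $v\in W^{1,p}_0(\Omega)\cap\mathbb{K}^1$ so that Lemma \ref{BonderSalortConvgResult} applies, identification of the limit by uniqueness, and an upgrade to $W^{r,p}_0(\Omega)$ by compact embedding; this is legitimate and more self-contained (no penalization, no splitting $f=f^+-f^-$). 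However, your route needs two facts the paper's route avoids and which you should actually justify: (i) the density of $\{v\in W^{1,p}_0(\Omega):v\geq0\}$ in $\mathbb{K}^\sigma$ for the \emph{strong} $W^{\sigma,p}_0(\Omega)$ topology, which amounts to continuity of the truncation $v\mapsto v^+$ in $W^{\sigma,p}$ (true, e.g. by the bound $|a^+-b^+|\leq|a-b|$, convergence of the Gagliardo seminorms and uniform convexity, but nowhere in the paper); weak convergence of truncated approximants would not suffice to extend your Minty inequality, since the extension step uses norm-continuity of $v\mapsto\langle-\Delta_p^\sigma v-f,v-u\rangle$; (ii) for $\sigma=1$, the membership $u\in W^{1,p}_0(\Omega)$ of the $L^p$-limit does \emph{not} follow from the Fatou argument you invoke, because the normalizing factor $(1-s_k)$ in \eqref{NormDef} makes the pointwise liminf of the integrand vanish; here you need the Bourgain--Brezis--Mironescu type compactness, as used in the proof of Theorem \ref{SemiLinearConvgThm} through Theorem 3.3 of \cite{BonderSalort2020StabilityNonlocal}. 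With these two points repaired your proof is complete; what the paper's penalization route buys in exchange is a quantitative, $s$-uniform approximation of $u^s$ by solutions of stable semilinear Dirichlet problems, which is reused later for the quasi-characteristic functions $\vartheta^s$, while your argument yields the stability of the variational inequality with lighter machinery.
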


\begin{proof}
For any fixed $\varepsilon>0$,  in the case of $\sigma=1$ we can apply Theorem \ref{SemiLinearConvgThm} and in the case of $0<\sigma<1$ the Corollary \ref{SemiLinearConvgCor} to take $s\to \sigma$ in the associated penalised problems \eqref{PenalProb}, i.e. in
    \begin{equation}\label{PenalProbS}
    u_\varepsilon^s\in W^{s,p}_0(\Omega):\quad\langle -\Delta_p^s u_\varepsilon^s,v\rangle+\int_\Omega f^- \theta_\varepsilon(u^s_\varepsilon) v=\int_\Omega  f^+ v, \quad\forall v\in W^{s,p}_0(\Omega).\end{equation} 
Then,  $u^s_\varepsilon\to u^\sigma_\varepsilon$ in $W^{r,p}_0(\Omega)$ as $s\to\sigma$, for any $0\leq r<\sigma$, where $u^\sigma_\varepsilon$ solves the approximated problem \eqref{PenalProbS} for $s=\sigma$, $0<\sigma\leq 1$.  

On the other hand, for all $\varepsilon>0$, both $[u^s_\varepsilon-u^s]_{s,p}$ and $[u^\sigma_\varepsilon-u^\sigma]_{\sigma,p}$ satisfy the estimate \eqref{PenalProbEst} with $C_p$ replaced by $C_p'$ given by \eqref{PenalProbEstIndepS}, which is independent of $s$ and $\sigma$. 

As a consequence, for an arbitrary $\delta>0$, we may choose an $s$ sufficiently close to $\sigma$ and an $\varepsilon_\delta>0$, such that
    \[[u^\sigma-u^s]_{r,p}\leq [u^\sigma-u^\sigma_{\varepsilon_\delta}]_{r,p}+[u^\sigma_{\varepsilon_\delta}-u^s_{\varepsilon_\delta}]_{r,p}+[u^s_{\varepsilon_\delta}-u^s]_{r,p}<\delta,\] for any fixed $r<\sigma$ such that $s>r$.  Indeed, by the estimate \eqref{PenalProbEst}, there exists a sufficiently small $\varepsilon_s>0$, such that 
    
    \[[u^s_{\varepsilon_s}-u^s]_{r,p}\leq C_{s,r}[u^s_{\varepsilon_s}-u^s]_{s,p}\leq C_{s,r} C'_p \varepsilon_s^{\frac{1}{p\vee 2}}< \delta/3, \]
where $C_{s,r}>0$ is the constant of the continuous inclusion $W^{s,p}_0(\Omega)\subset W^{r,p}_0(\Omega)$. Analogously, there exists an $\varepsilon_\sigma>0$, such that $[u^\sigma-u^\sigma_{\varepsilon_\sigma}]_{r,p}\leq C_{\sigma,r} C'_p \varepsilon_\sigma^{\frac{1}{p\vee 2}}< \delta/3$. Choosing $\varepsilon_\delta = \varepsilon_s\wedge\varepsilon_\sigma$, by the convergence $u^s_{\varepsilon_\delta}\to u^\sigma_{\varepsilon_\delta}$ in $W^{r,p}_0(\Omega)$ as $s\to\sigma$, we may also take $[u^\sigma_{\varepsilon_\delta}-u^s_{\varepsilon_\delta}]_{r,p}< \delta/3$, concluding the proof of the theorem.

\end{proof}

As a consequence of Theorem \ref{LocalRegEllipNonlin1} and Theorem \ref{LpConvgThm}, we obtain as a corollary the following uniform convergence.
\begin{corollary}\label{LpConvgCorUniform}
    There exists a $\lambda$, $0<\lambda<\beta$, such that 
    \begin{equation}u^s\to u^1\quad\text{ in }C^\lambda(\bar{\omega})\quad\text{ as } s\nearrow1, \quad \text{ for every open subset $\omega\Subset\Omega$,}\end{equation} and therefore the convergence is uniform in the interior of $\Omega$.
\end{corollary}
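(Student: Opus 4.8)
The plan is to combine the two convergence results in the obvious way: Theorem~\ref{LpConvgThm} with $\sigma=1$ gives strong convergence $u^s\to u^1$ in $W^{r,p}_0(\Omega)$, hence in $L^p(\Omega)$, as $s\nearrow 1$, while Theorem~\ref{LocalRegEllipNonlin1} (the robust local part) provides, for each fixed $\omega\Subset\Omega$, a uniform bound $\norm{u^s}_{C^\beta(\bar\omega)}\le C_\omega$ for all $s_0\le s<1$, with $0<\beta\le s_0$ depending only on $n,p$. The bound $\norm{u^1}_{C^\beta(\bar\omega)}\le C_\omega$ holds as well, either by passing to the limit in this estimate or by the classical local regularity for the $p$-obstacle problem; this secures that the limit lives in the same H\"older class.

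First I would fix $\omega\Subset\Omega$ and pick an intermediate open set $\omega'$ with $\omega\Subset\omega'\Subset\Omega$, so that the uniform estimate $\norm{u^s}_{C^\beta(\bar{\omega'})}\le C_{\omega'}$ applies. By the Arzel\`a--Ascoli theorem the family $\{u^s\}_{s_0\le s<1}$ is precompact in $C(\bar\omega)$; moreover, by the compact embedding $C^\beta(\bar\omega)\Subset C^\lambda(\bar\omega)$ for any $0<\lambda<\beta$, it is precompact in $C^\lambda(\bar\omega)$ as well. Then I would take an arbitrary sequence $s_k\nearrow 1$: by precompactness a subsequence converges in $C^\lambda(\bar\omega)$ to some limit $w$, and since this subsequence also converges to $u^1$ in $L^p(\omega)$ (from Theorem~\ref{LpConvgThm}), uniqueness of limits forces $w=u^1$ a.e., hence everywhere by continuity. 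Since every subsequence of $\{u^{s_k}\}$ has a further subsequence converging in $C^\lambda(\bar\omega)$ to the same limit $u^1$, the whole family converges: $u^s\to u^1$ in $C^\lambda(\bar\omega)$ as $s\nearrow1$. As $\omega\Subset\Omega$ was arbitrary and each such $\omega$ is contained in a slightly larger one, the convergence is uniform on compact subsets of $\Omega$, i.e. uniform in the interior.

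The only genuine subtlety — hardly an obstacle — is the bookkeeping of exponents: the robust bound gives one fixed $\beta\le s_0$ for all $s\ge s_0$, and one must choose $\lambda<\beta$ once and for all (independently of $\omega$, which is automatic since $\beta$ depends only on $n,p$); then the compactness of $C^\beta(\bar\omega)\hookrightarrow C^\lambda(\bar\omega)$ does the rest. One should also note that the $L^p$-convergence from Theorem~\ref{LpConvgThm} is global on $\Omega$ and in particular gives convergence in $L^p(\omega)$, which is all that is needed to identify the uniform limit. Thus the proof is essentially a soft argument: uniform H\"older bounds plus a known $L^p$-limit, glued by Arzel\`a--Ascoli and a subsequence-uniqueness argument.
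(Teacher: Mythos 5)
Your argument is correct and is essentially the paper's intended one: the corollary is stated as a direct consequence of the uniform local H\"older bound of Theorem \ref{LocalRegEllipNonlin1} and the convergence of Theorem \ref{LpConvgThm}, glued by Arzel\`a--Ascoli (compactness of $C^\beta(\bar\omega)$ in $C^\lambda(\bar\omega)$ for $\lambda<\beta$) together with identification of the limit through the $L^p$ convergence, exactly as you do.
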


As a corollary of this uniform convergence, by Theorem \ref{LocalRegEllipNonlin1}, whenever $u^s>0$ we have $\vartheta^s=1$, for $0<s\leq1$, and by \eqref{EllipticProbEq} the following result follows immediately.
\begin{corollary}\label{LpConvgCorCoin}
    For every measurable subset $\omega\Subset\Omega_+^1=\{x\in\Omega:u^1(x)>0\}$ and $s$ sufficiently close to 1 we have $\omega\subset\Omega_+^s=\{x\in\Omega:u^s(x)>0\}$ and $-\Delta^s_pu^s=f=-\Delta_pu^1$ a.e. in $\omega$.
\end{corollary}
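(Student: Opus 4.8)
The plan is to deduce the statement from the semilinear identity \eqref{EllipticProbEq} of Theorem~\ref{LocalRegEllipNonlin1}, after using the uniform convergence of Corollary~\ref{LpConvgCorUniform} to locate the positivity set of $u^s$ near $1$. I would first fix a measurable $\omega\Subset\Omega_+^1$. Since $\Omega_+^1$ is relatively open in $\Omega$ and $u^1$ is continuous on $\bar\omega$ (for instance $u^1\in C^\lambda(\bar\omega)$ by Corollary~\ref{LpConvgCorUniform}), the compact set $\bar\omega$ lies in $\Omega_+^1$; hence one can choose an open set $\omega'$ with $\bar\omega\subset\omega'\Subset\Omega$ and $\overline{\omega'}\subset\Omega_+^1$, and then $m:=\min_{\overline{\omega'}}u^1>0$. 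Applying Corollary~\ref{LpConvgCorUniform} on $\omega'$, $u^s\to u^1$ uniformly on $\overline{\omega'}$ as $s\nearrow1$, so there is $s_*<1$ such that $\|u^s-u^1\|_{C^0(\overline{\omega'})}<m/2$ for all $s\in[s_*,1)$; consequently $u^s\ge m/2>0$ on $\overline{\omega'}$, and in particular $\omega\subset\omega'\subset\Omega_+^s$, which is the first assertion.

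For the equation, fix $s\in[s_*,1)$. Since $u^s>0$ a.e.\ on $\omega'$, the bound $\chi_{\{u^s>0\}}\le\vartheta^s\le1$ in \eqref{ThetaLimit} forces $\vartheta^s=1$ a.e.\ on $\omega'$, and then \eqref{EllipticProbEq} gives $-\Delta_p^su^s=f^+-f^-=f$ a.e.\ in $\omega'$, hence a.e.\ in $\omega$. For the limit problem, $\omega'$ lies in the non-coincidence set $\{u^1>0\}$, so for every $\varphi\in C_c^\infty(\omega')$ and $\delta>0$ small enough that $u^1\pm\delta\varphi\ge0$ on $\Omega$ one has $u^1\pm\delta\varphi\in\mathbb{K}^1$; testing the variational inequality \eqref{ClassicalObsProbLg} with these two competitors yields $\langle-\Delta_pu^1-f,\varphi\rangle=0$ for all such $\varphi$, and since $f\in L^\infty(\Omega)$ this means $-\Delta_pu^1=f$ a.e.\ in $\omega'$, hence a.e.\ in $\omega$ (equivalently, this is the $s=1$ instance of \eqref{EllipticProbEq}, with $\vartheta^1=1$ on $\omega'$). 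Combining the two identities, $-\Delta_p^su^s=f=-\Delta_pu^1$ a.e.\ in $\omega$.

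I do not expect a genuine obstacle here: once Theorems~\ref{LpConvgThm} and \ref{LocalRegEllipNonlin1} and Corollary~\ref{LpConvgCorUniform} are available, the argument is essentially immediate. The one step deserving care is the passage from ``$u^1>0$ on the compact set $\overline{\omega'}$'' to ``$u^s>0$ there for $s$ close to $1$'', which genuinely relies on the uniform (indeed $C^\lambda$) convergence of Corollary~\ref{LpConvgCorUniform} --- mere weak or $L^p$ convergence would not preserve pointwise positivity --- and which in turn rests on the robust interior Hölder estimates incorporated in Theorem~\ref{LocalRegEllipNonlin1}.
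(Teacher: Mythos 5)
Your argument is correct and follows essentially the same route as the paper: uniform convergence from Corollary~\ref{LpConvgCorUniform} gives $u^s>0$ on a compact neighbourhood of $\omega$ for $s$ near $1$, whence $\vartheta^s=1$ there by \eqref{ThetaLimit} and the equation $-\Delta_p^su^s=f=-\Delta_pu^1$ follows from \eqref{EllipticProbEq} (and its $s=1$ counterpart). Your write-up merely makes explicit the compactness/uniform-convergence step and the verification for the limit problem, which the paper leaves implicit.
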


\begin{remark}\label{LinConvgRemark}
In the case $p=2$ where $-\Delta_p^s$ corresponds to the linear nonlocal operator $-\Delta^s$ for $u,v\in H^s_0(\Omega)$ in Lipschitz domains $\Omega\subset\mathbb{R}^n$, the result was given in Theorem 3.8 of \cite{FracObsRiesz}.
\end{remark}

\begin{remark}
    An interesting extension will be to obtain the convergence $s\to 1$ for general non-homogeneous nonlinear kernels \[\int_{\mathbb{R}^n}\int_{\mathbb{R}^n}\frac{|u(x)-u(y)|^{p-2}(u(x)-u(y))(v(x)-v(y))}{|x-y|^{n+sp}}K_s(x,y)\,dx\,dy.\] A major difficulty is that in the nonlocal case, the anisotropy given by $K_s(x,y)$ is defined in two variables, while the anisotropy of the local $p$-Laplacian 
    \[\int_\Omega A(x)|\nabla u|^{p-2}\nabla u \cdot \nabla v \, dx\] is defined in one variable. In some cases, with suitable assumptions on $K_s(x,y)$, as in \cite{BonderSalort2022AsymptoticSFracEnergies}, it is possible to establish a $\Gamma$-convergence of energy functionals but it is not known how to write explicitly the expression for the limit energy $\mathcal{J}_{1,p}$ . Another interesting extension in the nonlinear case is to consider the non-homogeneous Dirichlet problem, as the Dirichlet boundary condition is defined nonlocally a.e. in $\mathbb{R}^n\backslash\Omega$, for $0<s<1$, and defined locally by the trace on $\partial\Omega$ for $s=1$ (see \cite{Kassmann2023}).
\end{remark}

\section{Stability of the $s$-Coincidence Sets in Measure as $s\nearrow1$}

Since the solutions $u_s$ are unique we have the stability of $u_s\to u_\sigma$ for the whole sequence $s\to\sigma\leq1$. on the other hand, the quasi-characteristic functions $\vartheta_s$ are not known to be unique in general, so we only prove a weaker stability, which is our first result in this section.

\begin{proposition}\label{ConvgQ}
    Let $f\in L^\infty(\Omega)$,   and, up to a set of Lebesgue measure zero, assumed
    \begin{equation}\Lambda=\{x\in\Omega:f(x)<0 \}\neq\emptyset.\end{equation} Let $(u^s, \vartheta^s)$ satisfy \eqref{EllipticProbEq} and \eqref{ThetaLimit}. Then, as $s\to\sigma\leq1$, there exists a subsequence $s_k$ and a function $\vartheta^*$, such that $0\leq\vartheta^*\leq 1$ and 
    \begin{equation}\label{qConvg}
        \vartheta^{s_k} \rightharpoonup\vartheta^* \text{ in $L^\infty(\Omega)$-weak$^*$} \quad \text{ as } \quad s_k\to\sigma\end{equation}
    with 
    \begin{equation}\vartheta^*=\vartheta^\sigma \text{ a.e. in }\Lambda,\end{equation}
    where $(u^\sigma, \vartheta^\sigma)$ 
    satisfies \eqref{EllipticProbEq} and \eqref{ThetaLimit}.
\end{proposition}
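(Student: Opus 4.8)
The plan is to extract a weak-$*$ limit $\vartheta^*$ of the quasi-characteristic functions, to pass to the limit in the nonlinear elliptic equation \eqref{EllipticProbEq} by a Minty-type monotonicity argument built on Lemma \ref{BonderSalortConvgResult}, and then to compare the resulting datum with the one carried by the limit obstacle problem. \emph{Step 1 (compactness and candidate limits).} Since $0\le\vartheta^s\le1$ a.e., the family $\{\vartheta^s\}$ is bounded in $L^\infty(\Omega)=(L^1(\Omega))^*$, and as $L^1(\Omega)$ is separable we may extract, along $s_k\to\sigma$, a subsequence (still denoted $s_k$) with $\vartheta^{s_k}\rightharpoonup\vartheta^*$ in $L^\infty(\Omega)$-weak$^*$; testing against nonnegative $L^1$-functions gives $0\le\vartheta^*\le1$ a.e. By Theorem \ref{LpConvgThm}, $u^{s_k}\to u^\sigma$ strongly in $W^{r,p}_0(\Omega)$ for every $r<\sigma$, hence strongly in $L^p(\Omega)$ and therefore in $L^1(\Omega)$ as $\Omega$ is bounded, where $u^\sigma$ is the unique solution of the obstacle problem at level $\sigma$; moreover, testing \eqref{ObsProb} with $v=0$ and using the uniform Poincar\'e inequality of Lemma \ref{Poincare} yields $\sup_k[u^{s_k}]_{s_k,p}^p<\infty$. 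Finally, by Theorem \ref{LocalRegEllipNonlin1} (and, for $\sigma=1$, by the classical theory of the $p$-obstacle problem), there is a $\vartheta^\sigma$ satisfying \eqref{ThetaLimit} with $-\Delta_p^\sigma u^\sigma=f^+-f^-\vartheta^\sigma$ in $W^{-\sigma,p'}(\Omega)$.

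\emph{Step 2 (passing to the limit).} It suffices to show $-\Delta_p^\sigma u^\sigma=f^+-f^-\vartheta^*$, since then $f^-\vartheta^*=f^-\vartheta^\sigma$ a.e. in $\Omega$ and hence $\vartheta^*=\vartheta^\sigma$ a.e. on $\Lambda=\{f^->0\}$. Fix $w\in C_c^\infty(\Omega)\subset W^{1,p}_0(\Omega)\subset W^{s_k,p}_0(\Omega)$. By monotonicity of $-\Delta_p^{s_k}$ (Proposition \ref{LgCoercive}) and by \eqref{EllipticProbEq}, rewriting the duality pairing as an integral via Remark 2.3,
\[\int_\Omega(f^+-f^-\vartheta^{s_k})(u^{s_k}-w)\,dx=\langle-\Delta_p^{s_k}u^{s_k},u^{s_k}-w\rangle\ge\langle-\Delta_p^{s_k}w,u^{s_k}-w\rangle.\]
In the left-hand side, $f^-\vartheta^{s_k}\rightharpoonup f^-\vartheta^*$ weak-$*$ in $L^\infty(\Omega)$ while $u^{s_k}-w\to u^\sigma-w$ strongly in $L^1(\Omega)$, and the uniform $L^\infty$-bound on $f^+-f^-\vartheta^{s_k}$ lets us pass to the limit in the product. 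In the right-hand side, Lemma \ref{BonderSalortConvgResult} applies with the fixed function $w\in W^{1,p}_0(\Omega)$ and $v_k:=u^{s_k}-w$, which satisfies $\sup_k[v_k]_{s_k,p}^p<\infty$ (by Step 1 and the continuity of $s\mapsto[w]_{s,p}$, cf. \eqref{SemiNormContinuity}) and $v_k\to u^\sigma-w$ in $L^p(\Omega)$, giving $\langle-\Delta_p^{s_k}w,u^{s_k}-w\rangle\to\langle-\Delta_p^\sigma w,u^\sigma-w\rangle$. Hence
\[\int_\Omega(f^+-f^-\vartheta^*)(u^\sigma-w)\,dx\ge\langle-\Delta_p^\sigma w,u^\sigma-w\rangle\qquad\text{for all }w\in C_c^\infty(\Omega),\]
and, both sides being continuous in $w$ for the $W^{\sigma,p}_0(\Omega)$-topology, this extends to all $w\in W^{\sigma,p}_0(\Omega)$.

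\emph{Step 3 (Minty's lemma and conclusion).} Taking $w=u^\sigma+tz$ with $z\in W^{\sigma,p}_0(\Omega)$ and $t>0$, dividing by $t$ and letting $t\to0^+$ (using the hemicontinuity of $-\Delta_p^\sigma$ along segments, which follows by dominated convergence in \eqref{LpLap}), one obtains $\langle-\Delta_p^\sigma u^\sigma,z\rangle\ge\int_\Omega(f^+-f^-\vartheta^*)z\,dx$ for all $z$; replacing $z$ by $-z$ yields equality, so $-\Delta_p^\sigma u^\sigma=f^+-f^-\vartheta^*$ in $W^{-\sigma,p'}(\Omega)$. Comparing this with Step 1 gives $f^-\vartheta^*=f^-\vartheta^\sigma$ a.e. in $\Omega$, whence $\vartheta^*=\vartheta^\sigma$ a.e. in $\Lambda$, which is the assertion.

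\emph{Main obstacle.} The delicate point is that Theorem \ref{LpConvgThm} only yields $u^{s_k}\to u^\sigma$ strongly in $W^{r,p}_0(\Omega)$ for $r<\sigma$, not in $W^{\sigma,p}_0(\Omega)$, so one cannot pass to the limit directly in the nonlinear pairing $\langle-\Delta_p^{s_k}u^{s_k},v\rangle$; the Minty trick, which relocates the fixed smooth function into the slot where Lemma \ref{BonderSalortConvgResult} is available, is exactly what circumvents this. The second, milder point is that the limit in $\int_\Omega(f^+-f^-\vartheta^{s_k})(u^{s_k}-w)\,dx$ is taken against a sequence rather than a fixed test function, which is handled by combining the uniform $L^\infty$-bound on $f^+-f^-\vartheta^{s_k}$ with the strong $L^1$-convergence of $u^{s_k}$ on the bounded set $\Omega$.
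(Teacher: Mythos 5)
Your proposal is correct and follows essentially the same route as the paper: extract a weak-$*$ limit $\vartheta^*$ of the bounded family $\vartheta^{s}$, pass to the limit in \eqref{EllipticProbEq} via the monotonicity inequality $\langle-\Delta_p^{s}u^{s}-(-\Delta_p^{s}w),u^{s}-w\rangle\ge0$ with smooth $w$, using Lemma \ref{BonderSalortConvgResult} for the fixed-function term and weak-$*$ times strong $L^p$ convergence for the term $\int_\Omega(f^+-f^-\vartheta^{s})(u^{s}-w)$, then conclude by density and Minty's trick ($w=u^\sigma\pm\delta z$) that $-\Delta_p^\sigma u^\sigma=f^+-f^-\vartheta^*$, and compare with the equation for $(u^\sigma,\vartheta^\sigma)$ to get $\vartheta^*=\vartheta^\sigma$ a.e. on $\Lambda$. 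This is precisely the argument in the paper's proof of Proposition \ref{ConvgQ}.
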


\begin{proof}
   By \eqref{EllipticProbEq} and \eqref{ThetaLimit}, there exists a function $\varphi^s\in L^\infty(\Omega)$ such that
    \begin{equation}\label{ConvgCoin3}
    -\Delta_p^su^s = f^+ - f^-\vartheta^s=\varphi^s \quad\text{ a.e. in }\Omega.\end{equation}
 Since $0\leq\vartheta_s\leq1$, we can take a subsequence $s_k\to\sigma$ such that \eqref{qConvg} holds for some function function $\vartheta^*$, such that $0\leq\vartheta^*\leq 1$. 
    
    Furthermore, still denoting $s_k$ by $s$, $\varphi^s$ converges to  $\varphi^*=f^+ - f^-\vartheta^*$ in $L^{\infty}(\Omega)$-weak$^*$. Passing to the limit in \eqref{ConvgCoin3}, by Lemma \ref{BonderSalortConvgResult} and Theorem \ref{LpConvgThm}, we have 
    \begin{equation}\label{ConvgSigma}
     -\Delta_p^su^s\rightharpoonup - \Delta^\sigma_pu^\sigma  \quad\text{ in }\quad L^\infty(\Omega)-weak^* \quad\text{ as } s\to \sigma, \quad  0<\sigma\leq 1.    
    \end{equation}
    
    This is because 
    \[0\leq \langle -\Delta^s_pu^s,u^s-v\rangle-\langle-\Delta^s_pv,u^s-v\rangle\to\langle\varphi^*,u^\sigma-v\rangle-\langle-\Delta_p^\sigma v,u^\sigma-v\rangle\quad\forall v\in C_c^\infty(\Omega),\] and the convergence follows by Lemma \ref{BonderSalortConvgResult}. Since $C_c^\infty(\Omega)$ is dense in $W^{\sigma,p}_0(\Omega)$, we can take $v=u^\sigma \pm \delta w$ in this inequality, with arbitrary $w\in W^{\sigma,p}_0(\Omega)$ and let $\delta \to 0$, in order to conclude 
    \begin{equation}\label{ConvgCoin4}
    -\Delta_p^\sigma u^\sigma = \varphi^*=f^+ - f^-\vartheta^*\quad\text{ a.e. in }\Omega.\end{equation}
    
   Now, comparing \eqref{EllipticProbEq} for $s=\sigma$ with \eqref{ConvgCoin4} we obtain
    \begin{equation}\vartheta^* f^- = \vartheta^\sigma f^- \quad\text{ a.e. in }\Omega,\end{equation}
    concluding the proof of the proposition. 
\end{proof}

Using the Lewi-Stampacchia estimates, if we assume  the nondegeneracy assumption that the Lebesgue measure of the set  $\{f=0\}$ vanishes, we can also obtaining the weak stability of the quasi-characteristic functions $\vartheta_s$, as a consequence of the previous proposition.

\begin{corollary}
    If $f\neq0$ a.e. in any open subset $\omega\subseteq\Omega$, then $\vartheta^s \rightharpoonup\vartheta^\sigma$ in $L^\infty(\omega)$-weak$^*$  as $s\to\sigma$, for $0<\sigma\leq 1$.
\end{corollary}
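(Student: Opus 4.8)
The plan is to reduce the statement to Proposition~\ref{ConvgQ} by splitting $\omega$, up to a Lebesgue‑null set, into the two pieces $\omega\cap\{f<0\}$ and $\omega\cap\{f>0\}$ where $f$ has a definite sign, handling the first piece by Proposition~\ref{ConvgQ} and the second by showing that \emph{all} the quasi‑characteristic functions are identically $1$ there. First I would record what the proof of Proposition~\ref{ConvgQ} actually gives: along any sequence $s\to\sigma$, the family $\{\vartheta^s\}$ is bounded in $L^\infty(\Omega)$, so any weak$^*$ accumulation point $\vartheta^*$ exists, and passing to the limit in \eqref{EllipticProbEq} through Lemma~\ref{BonderSalortConvgResult} and Theorem~\ref{LpConvgThm} and comparing with \eqref{EllipticProbEq} at $s=\sigma$ yields $f^-\vartheta^*=f^-\vartheta^\sigma$ a.e. in $\Omega$, where $\vartheta^\sigma$ is a quasi‑characteristic function of the limit problem. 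On $\{f<0\}$, where $f^->0$, this forces $\vartheta^*=\vartheta^\sigma$ a.e.

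The new ingredient is the claim that $\vartheta^s=1$ a.e. in $\{f>0\}$ for every $0<s\le 1$. To get it, I would show that the coincidence set $I^s=\{u^s=0\}$ is Lebesgue‑null in $\{f>0\}$. By \eqref{EllipticProbEq}, since $f^+=f$ and $f^-=0$ there, $-\Delta_p^su^s=f$ a.e. in $\{f\ge 0\}$; moreover, by the Lewy–Stampacchia inequality (Remark~\ref{LewyStampacchia}) and Theorem~\ref{LocalRegEllipNonlin1}, $-\Delta_p^su^s\in L^\infty(\Omega)$ and $u^s\in C^\beta_{loc}(\Omega)$, so the operator may be evaluated pointwise a.e.; and at a.e. $x\in I^s$ one has $u^s(x)=0$ while $u^s\ge 0$ on $\mathbb{R}^n$, hence $|u^s(x)-u^s(y)|^{p-2}(u^s(x)-u^s(y))=-\,u^s(y)^{p-1}\le 0$ for all $y$, so $(-\Delta_p^su^s)(x)\le 0$. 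Comparing gives $f\le 0$ a.e. on $I^s$, i.e. $|I^s\cap\{f>0\}|=0$, so $u^s>0$ a.e. in $\{f>0\}$, and then \eqref{ThetaLimit}, i.e. $\chi_{\{u^s>0\}}\le\vartheta^s\le 1$, forces $\vartheta^s=1$ a.e. there. Applying this also at $s=\sigma$ and using the hypothesis $f\ne 0$ a.e. in $\omega$, so that $\omega=(\omega\cap\{f<0\})\cup(\omega\cap\{f>0\})$ up to a null set, the previous paragraph upgrades to $\vartheta^*=\vartheta^\sigma$ a.e. in $\omega$ for \emph{every} weak$^*$ accumulation point $\vartheta^*$ of $\{\vartheta^s|_\omega\}$.

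Finally I would pass from subsequences to the full limit. The family $\{\vartheta^s|_\omega\}$ is bounded in $L^\infty(\omega)$, hence weak$^*$ sequentially precompact; and $\vartheta^\sigma|_\omega$ is itself uniquely determined a.e., being pinned down on $\omega\cap\{f<0\}$ by $f^-\vartheta^\sigma=f^++\Delta_p^\sigma u^\sigma$ and equal to $1$ on $\omega\cap\{f>0\}$. Since by the preceding step every weak$^*$ accumulation point of $\{\vartheta^s|_\omega\}$ as $s\to\sigma$ coincides with $\vartheta^\sigma|_\omega$, the whole family converges, $\vartheta^s\rightharpoonup\vartheta^\sigma$ in $L^\infty(\omega)$‑weak$^*$, which is the assertion.

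The main obstacle is the pointwise sign statement $(-\Delta_p^su^s)\le 0$ a.e. on $I^s$: one must justify that the $L^\infty$‑representative of the (nonlinear, nonlocal) operator agrees a.e. with the integral $-\kappa_s\int_{\mathbb{R}^n}u^s(y)^{p-1}|x-y|^{-n-sp}\,dy$ at a.e. $x\in I^s$, which is reasonable because there the integrand is single‑signed and the uniform interior Hölder bound of Theorem~\ref{LocalRegEllipNonlin1} controls its behaviour near $x$; this is the nonlocal analogue of the classical fact that $\Delta_p u^s=0$ a.e. on $\{u^s=0\}$, so that the coincidence set must lie in $\{f\le 0\}$. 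All the remaining ingredients — weak$^*$ compactness, the subsequence principle, and the bookkeeping of the null set $\{f=0\}$ — are routine.
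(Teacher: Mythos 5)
Your overall strategy coincides with the paper's: split $\omega$, up to the null set $\{f=0\}$, into $\omega\cap\{f<0\}$, where Proposition~\ref{ConvgQ} identifies every weak$^*$ accumulation point with $\vartheta^\sigma$, and $\omega\cap\{f>0\}$, where one argues that all admissible quasi-characteristic functions take one and the same $s$-independent value, and then conclude by weak$^*$ compactness and the subsequence principle. The only divergence is in how $\{f>0\}$ is treated. The paper disposes of it in one line through the Lewy--Stampacchia inequalities of Remark~\ref{LewyStampacchia}, asserting a fixed value of $\vartheta^s$ there for every $0<s\le 1$ (written as $\vartheta^s=0$; note that \eqref{ThetaLimit} together with $u^s>0$ a.e.\ on $\{f>0\}$ in fact forces the value $1$, so your value is the one compatible with \eqref{ThetaLimit}). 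You correctly observed that equation \eqref{EllipticProbEq} by itself cannot pin down $\vartheta^s$ on $\{f>0\}$, since $f^-$ vanishes there, so some additional information — namely $\abs{I^s\cap\{f>0\}}=0$ — is genuinely needed for the corollary to hold for arbitrary admissible $\vartheta^s$.

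The genuine gap is exactly the step you flag: the claim that the $L^\infty$ representative of $-\Delta_p^su^s$ provided by Lewy--Stampacchia coincides, a.e.\ on the coincidence set $I^s$, with the single-signed pointwise integral $-2\kappa_s\int_{\mathbb{R}^n} u^s(y)^{p-1}\abs{x-y}^{-n-sp}\,dy\le 0$. With only the (uniform interior) H\"older regularity of Theorem~\ref{LocalRegEllipNonlin1} this identification is not automatic: the H\"older bound $u^s(y)\le C\abs{x-y}^{\beta}$ with $\beta\le s$ never makes this integral absolutely convergent (one would need decay of order better than $sp/(p-1)>s$ near $x$), so the pointwise expression may be $-\infty$, and equating it a.e.\ with a bounded function requires a real argument — for instance through the penalized problems \eqref{EllipRegEqPenal}, through Lebesgue-point arguments for $-\Delta_p^su^s$, or through a weak-versus-pointwise (viscosity) comparison result for the fractional $p$-Laplacian. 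None of this is supplied in your proposal, and it is not contained in the paper either, whose own one-line appeal to Lewy--Stampacchia on $\{f>0\}$ is terse on precisely this point. So the skeleton of your proof matches the paper's, but the nondegeneracy statement $u^s>0$ a.e.\ in $\{f>0\}$ for each fixed $s<1$, on which your argument rests, still needs a complete proof.
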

\begin{proof}
    The Lewi-Stampacchia inequalities, $f\leq -\Delta_p^su^s\leq f^+$ a.e. in $\Omega$, imply that $\vartheta^s=0$ a.e. in $\{f>0\}$, where $f=f^+$, for every $0<s\leq 1$. Consequently, also $\vartheta^*=0$ in $\{f>0\}$, which, with the result of the proposition yielding $\vartheta^*=\vartheta^\sigma$ in $\{f<0\}$ and the assumption  $meas(\{f=0\}\cap \omega)=0$, clearly implies $\vartheta^*=\vartheta^\sigma$ also a.e. in $\omega$, concluding the proof.
\end{proof}

In the local case $s=1$, denoting now $u^1=u$ in the obstacle problem for the $p$-Laplacian, it is known that, locally in any non-empty open subset $\omega\Subset\Omega$, $f\in L^\infty(\Omega)$ is such that \begin{equation}\label{ConvgCoin1f}f(x) \leq -\lambda <0\quad\text{ for }\lambda>0, \quad \text{ a.e. }x \in \omega\Subset\Omega,\end{equation}
which implies that the free boundary $\partial\{u>0\}\cap\omega$ has Lebesgue measure zero (see \cite{Caffarelli1981pLapCoincidenceStability} for $p=2$, \cite{LeeShahgholian2003pLapCoincidenceStability} for $p>2$ and \cite{ChallalLyaghfouriRodrigues2012pLapCoincidenceStability} for $1<p<2$).
As a consequence of Theorem \ref{LocalRegEllipNonlin1}, with this local nondegeneracy of the limit problem we
have that in fact the $\vartheta^1=\vartheta$ is a.e. uniquely determined by a characteristic function, i.e.

\begin{equation}\label{qi}
    0\leq \vartheta=\chi_{\{u>0\}}=1-\chi_{\{u=0\}}\leq1 \quad\text{ a.e. in }\omega,
\end{equation}
and we can write the equation for $u=u^1$ 
\begin{equation}\label{ConvgCoin0}
    -\Delta_p u = f - \chi_{{}_I} f^-\quad\text{ a.e. in }\omega\Subset\Omega,
\end{equation}
in terms of the characteristic function $\chi_{{}_I}$ of the coincidence set 
\[I=\{u=0\}.\]
Then, using the method of Theorem 6:6.1 of \cite{ObstacleProblems}, used in a different framework with linear operators and extended to the $p$-Laplacian type operators in \cite{JFR2005j1}, we can prove the following convergence of the coincidence sets.

\begin{theorem}
     Suppose $f\in L^\infty(\Omega)$ satisfies \eqref{ConvgCoin1f}, so that the free boundary $\partial\{u>0\}\cap\omega$ in the limit case $s=1$ has Lebesgue measure zero. Let $u^s$ be the solution of \eqref{LpObsProbConvgS} and $u$ be the limit solution of the local \eqref{ClassicalObsProbLg} corresponding to the limit $s\nearrow1$. Then, the respective coincidence sets of the nonlocal obstacle problems, defined by \[I^s= \{u^s=0\},\] converge locally in $\omega$ towards the coincidence set of the local obstacle problem, i.e. $I^s_\omega=I^s\cap{\omega}$ converges to $I_\omega=I\cap{\omega}$, 
   in the sense that their characteristic functions satisfy
    \begin{equation}\label{ConvgCoin5}\chi_{{}_{I^s_\omega}} \to \chi_{{}_{I_\omega}}\quad \text{ in }L^r(\Omega), \quad \forall 1\leq r<\infty.\end{equation} 
    Here $\chi_{{}_{I_\omega}}(x) = 1$ for $x\in I_\omega$ and $0$ otherwise.

\end{theorem}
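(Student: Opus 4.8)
The plan is to combine three ingredients: the uniform convergence $u^s\to u$ in $C^\lambda(\bar\omega)$ from Corollary \ref{LpConvgCorUniform}; the characterisation of the coincidence set via the equations \eqref{EllipticProbEq} and \eqref{ConvgCoin0}; and the measure-theoretic fact that, under \eqref{ConvgCoin1f}, the limit free boundary $\Phi^1=\partial\{u>0\}\cap\omega$ is Lebesgue-null, so that $\chi_{{}_{I_\omega}}=1-\chi_{\{u>0\}}$ pointwise a.e.\ in $\omega$ with no "transition layer" of positive measure. Since $L^r(\omega)$-convergence for all finite $r$ on a bounded set follows from a.e.\ convergence of uniformly bounded functions by dominated convergence, it suffices to prove $\chi_{{}_{I^s_\omega}}\to\chi_{{}_{I_\omega}}$ a.e.\ in $\omega$ as $s\nearrow1$.

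First I would handle the set $\{u>0\}\cap\omega$: if $x_0\in\omega$ with $u(x_0)>0$, then by uniform convergence $u^s>0$ in a neighbourhood of $x_0$ for $s$ close to $1$, hence $\chi_{{}_{I^s_\omega}}(x_0)=0=\chi_{{}_{I_\omega}}(x_0)$ for all such $s$. So pointwise convergence holds everywhere on the open set $\{u>0\}\cap\omega$. It remains to treat $\{u=0\}\cap\omega$, and here is where the nondegeneracy enters. Up to the Lebesgue-null set $\Phi^1\cap\omega$, the set $\{u=0\}\cap\omega$ coincides with the interior $\mathrm{int}\,I\cap\omega$ a.e. The plan for a point $x_0$ in (a density point of) $\mathrm{int}\,I$ is to use the strong form of the equations: on a ball $B=B_\rho(x_0)\Subset\omega\cap\mathrm{int}\,I$ we have $-\Delta_p u=f-f^-\chi_{{}_I}=f-f^-=f^+=0$ a.e.\ (using $f\le-\lambda<0$, so $f^+=0$ there), i.e. $u\equiv0$ solves $-\Delta_p u=0$ in $B$. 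I would then test the weak form of \eqref{EllipticProbEq} for $u^s$ on $B$ against $u^s$ itself (after localisation by a cutoff), combined with the Lewy–Stampacchia bound $f\le-\Delta_p^su^s\le f^+=0$ a.e.\ in $B$, which forces $-\Delta_p^su^s\le0$; pairing with the nonnegative $u^s$ and using that $u^s\to0$ uniformly on $B$, the Gagliardo energy of $u^s$ localised to $B$ tends to $0$, whence $\|u^s\|_{L^\infty(B')}\to0$ on a smaller ball — but more directly, since $f^+=0$ on $B$ and $u^s\ge0$ with $-\Delta_p^s u^s=-f^-\vartheta^s\le0$, the nonnegative $s$-superharmonic function $u^s$ attains small boundary values (from uniform convergence) and hence $u^s$ is small throughout; one then argues that in fact $\vartheta^s$ must be forced to make the right-hand side compatible, and a cleaner route is: integrate $-\Delta_p^su^s=f^+-f^-\vartheta^s=-f^-\vartheta^s$ against a fixed nonnegative test function $\varphi\in C_c^\infty(B)$ and pass to the limit using Lemma \ref{BonderSalortConvgResult} with $u^s\to u\equiv0$; the left side tends to $\langle-\Delta_p\cdot 0,\varphi\rangle=0$, so $\int_B f^-\vartheta^s\varphi\to0$, and since $f^-\ge\lambda>0$ on $B$ and $\vartheta^s\ge0$, this gives $\vartheta^s\to0$ in $L^1(B)$; combined with $0\le1-\vartheta^s\le\chi_{{}_{I^s}}$ from \eqref{1-ThetaLimit}, i.e. $\chi_{{}_{I^s}}\ge1-\vartheta^s\to1$ in $L^1(B)$, and $\chi_{{}_{I^s}}\le1$, we get $\chi_{{}_{I^s_\omega}}\to1=\chi_{{}_{I_\omega}}$ in $L^1(B)$, hence a.e.\ along a subsequence, and by uniqueness of the limit for the whole family.

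Assembling: a.e.\ in $\omega$ we have either $u(x)>0$ (handled by uniform convergence, genuine pointwise convergence) or $x\in\mathrm{int}\,I$ (handled by the localised energy/integration argument giving $L^1_{loc}$ hence a.e.\ convergence on $\mathrm{int}\,I$), and the leftover set $\Phi^1\cap\omega$ has measure zero by \eqref{ConvgCoin1f} and the cited regularity results. Therefore $\chi_{{}_{I^s_\omega}}\to\chi_{{}_{I_\omega}}$ a.e.\ in $\omega$; since $0\le\chi_{{}_{I^s_\omega}}\le1$ and $|\omega|<\infty$, dominated convergence upgrades this to $L^r(\omega)$-convergence for every $1\le r<\infty$, which is \eqref{ConvgCoin5}. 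The main obstacle I anticipate is making rigorous the step "$u^s$ small on $B$ forces $\vartheta^s$ small": one must be careful that $\vartheta^s$ is only a weak-$*$ limit object and need not be a characteristic function for $s<1$, so the argument has to go through the weak passage to the limit in the PDE (using Lemma \ref{BonderSalortConvgResult}) rather than through any pointwise statement about $\vartheta^s$ itself, and one must ensure the test-function localisation is compatible with the nonlocal operator — technically one tests on all of $\Omega$ with $\varphi\in C_c^\infty(B)\subset C_c^\infty(\Omega)$, so the nonlocality is not an obstruction, only the interpretation of $\langle-\Delta_p^su^s,\varphi\rangle$ as $\int_\Omega(-\Delta_p^su^s)\varphi$, which is justified since $-\Delta_p^su^s\in L^\infty(\Omega)$ by the Lewy–Stampacchia bounds.
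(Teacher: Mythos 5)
Your overall strategy is sound and it reaches \eqref{ConvgCoin5}, but it is genuinely different from the paper's proof, and one step is justified by the wrong tool. The paper argues globally by weak-$*$ compactness: it extracts weak-$*$ limits $q$ of $1-\vartheta^s$ and $\chi_*$ of $\chi_{{}_{I^s_\omega}}$, identifies $q=\chi_{{}_{I_\omega}}$ by comparing the limit equation $-\Delta_pu-f=qf^-$ with \eqref{qi}--\eqref{ConvgCoin0} (using $f\leq-\lambda<0$), gets $\chi_*\leq\chi_{{}_{I_\omega}}$ from $0=\int_\omega\chi_{{}_{I^s_\omega}}u^s\to\int_\omega\chi_* u$ (strong $L^p$ convergence of $u^s$ suffices; no uniform convergence is needed), and then upgrades the weak-$*$ convergence of characteristic functions to a characteristic function to strong $L^r$ convergence. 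Your proof instead decomposes $\omega$ into $\{u>0\}\cap\omega$ (handled pointwise via Corollary \ref{LpConvgCorUniform}), $\mathrm{int}(I)\cap\omega$ (handled by testing the equation \eqref{EllipticProbEq} on balls), and the null free boundary; this is more local and transparent, at the price of invoking the interior uniform convergence and an exhaustion of $\mathrm{int}(I)$ by balls.

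The step that needs repair is the limit passage $\int_\Omega(-\Delta_p^su^s)\varphi\to\int_\Omega(-\Delta_pu)\varphi$ for fixed $\varphi\in C_c^\infty(B)$. Lemma \ref{BonderSalortConvgResult} does not give this: in that lemma the operator acts on a \emph{fixed} $u\in W^{1,p}_0(\Omega)$ and the \emph{test functions} $v_k$ vary, whereas you need the operator applied to the varying family $u^s$; passing from one to the other is exactly the Minty-type monotonicity argument carried out in Proposition \ref{ConvgQ}, whose conclusion \eqref{ConvgSigma} (or, even more directly, the identification $\vartheta^s\rightharpoonup\vartheta^1=\chi_{\{u>0\}}$ in $L^\infty$-weak$^*$ on $\omega\subset\{f<0\}$, using \eqref{qi} and the uniqueness of the limit) is what you should cite; with that substitution your ball argument goes through and yields $\vartheta^s\to0$ in $L^1(B)$, hence $\chi_{{}_{I^s}}\geq1-\vartheta^s\to1$ in $L^1(B)$. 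Two minor points: the claim that the \emph{whole} family converges a.e.\ in $\omega$ is not established (your argument gives pointwise convergence on $\{u>0\}\cap\omega$ and only $L^1_{loc}$ convergence on $\mathrm{int}(I)\cap\omega$), but it is also not needed—combine the local $L^1$ convergences with the uniform bound $0\leq\chi_{{}_{I^s_\omega}}\leq1$ and the nullity of $\Phi\cap\omega$, or use the standard subsequence trick, to get \eqref{ConvgCoin5}; and the algebra ``$f-f^-\chi_{{}_I}=f-f^-=f^+$'' is incorrect as written (on $\omega$ one has $f-f^-=2f\neq0$), though harmless, since on $B\Subset\mathrm{int}(I)$ the identity $-\Delta_pu=0$ follows directly from $u\equiv0$ on $B$ and the locality of $-\Delta_p$, and $f^+=0$ on $\omega$ follows from \eqref{ConvgCoin1f}.
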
 

\begin{proof}
    As in Proposition \ref{ConvgQ}, for every $0<s<1$, there exists a function $\vartheta^s\in L^\infty(\Omega)$ such that
    \begin{equation}\label{ConvgCoin6}
    -\Delta_p^su^s = f^+ - f^-\vartheta^s \quad\text{ a.e. in }\omega.\end{equation} Recalling \eqref{ThetaLimit}, i.e. 
    \[0\leq \chi_{\{u^s>0\}}\leq\vartheta^s \leq 1,\] and we have 
    \begin{equation}\label{ConvgCoin7}0\leq q^s = 1- \vartheta^s \leq 1- \chi_{\{u^s>0\}}=\chi_{\{u^s=0\}}= \chi_{{}_{I^s_\omega}}\leq 1\quad\text{ in }\omega.\end{equation}
    Taking a subsequence and relabelling it as $s\to1$, we have 
    \begin{equation}\label{ConvgCoin8}q^s\rightharpoonup q \quad \text{ and }\quad \chi_{{}_{I^s_\omega}}\rightharpoonup\chi_*\quad \text{ in }L^\infty(\Omega)\text{-weak}^*\end{equation}
    for functions $q,\chi_*\in L^\infty(\Omega)$, which, from \eqref{ConvgCoin7}, must satisfy 
    \begin{equation}\label{ConvgCoin9}0\leq q \leq \chi_*\leq 1\quad\text{ a.e. in }\omega.\end{equation}

    Passing to the limit in \eqref{ConvgCoin6}, by Proposition \ref{ConvgQ}, we have 
    \begin{equation}\label{ConvgCoin10}
    -\Delta_p u - f = q f^-\quad\text{ a.e. in }\omega.\end{equation}

    Since $f(x)<0$ for $x\in \omega$, recalling \eqref{qi} and \eqref{ConvgCoin0}, we obtain
    \begin{equation}\label{ConvgCoin12}q f^- = (1-\vartheta) f^- =\chi_{{}_{I}} f^- \quad\text{ a.e. in }\omega.\end{equation} 
    Therefore, in $\omega\cap\{f<0\}$, $q=\chi_{{}_{I_\omega}}$ a.e. in $\omega$. Comparing with \eqref{ConvgCoin9}, we have
    \begin{equation}\label{ConvgCoin13}\chi_{{}_{I_\omega}}\leq \chi_*\quad\text{ a.e. in }\omega.\end{equation}

    On the other hand, since $u^s\to u$ in $L^p(\Omega)$, by \eqref{ConvgCoin8}, 
    \[0=\int_\omega\chi_{{}_{I^s_\omega}}u^s\,dx\to\int_\omega\chi_*u\,dx=0,\]
    hence $\chi_*u=0$ a.e. in $\omega$. This means that $\chi_*=0$ if $u>0$. Since $0\leq\chi_*\leq1$, we have also 
    \begin{equation}\label{ConvgCoin14}\chi_{{}_{I_\omega}}\geq\chi_*\quad\text{ a.e. in }\omega.\end{equation} 

    Combining this with \eqref{ConvgCoin13}, we get 
    \begin{equation}\label{ConvgCoin15}\chi_{{}_{I_\omega}}=\chi_*\quad \text{ a.e. in }\omega\end{equation} and the whole sequence $\chi_{{}_{I^s_\omega}}\to\chi_{{}_{I_\omega}}$ converge first weakly, and, since they are characteristic functions, also strongly in $L^r(\Omega)$ for any $1\leq r<\infty$.

\end{proof}

\begin{remark}
In the linear case $p=2$, the assumption \eqref{ConvgCoin1f} can be relaxed to $f(x)\neq 0$ a.e. $x\in\omega\Subset\Omega$, which, by the $W^{2,q}_{loc}$ regularity of the classical obstacle problem \cite{ObstacleProblems}, is enough in order that the limit solution satisfies \eqref{ConvgCoin0}.
\end{remark}

\begin{remark}
The local convergence of the characteristic functions \eqref{ConvgCoin5} is a local convergence in the Lebesgue measure of the coincidence sets 
\[I^s_\omega \xrightarrow[]{\mathcal{L}} I_\omega \quad \iff \quad d_\mathcal{L}(I^s_\omega,I_\omega)\to 0\] as $s\to 1$, up to null measure sets, where 
\[d_\mathcal{L}(I^s_\omega,I_\omega)= meas(I^s_\omega \div I_\omega) = \int_\omega|\chi_{{}_{I^s}}-\chi_{{}_{I}}|^r\,dx, \quad\forall r\geq1\]
and $I^s_\omega \div I_\omega=(I^s_\omega\backslash I_\omega)\cup(I_\omega\backslash I^s_\omega)$ denotes the symmetric difference of sets.

\end{remark}

\begin{remark}
    Unlike the convergence result for the limit obstacle problem $s=1$, where it is known that the free boundary has zero Lebesgue measure, in general, it is not possible to obtain a similar convergence on the coincidence sets in the nonlocal cases when $s\nearrow\sigma<1$ because that nondegeneracy property is not known to hold in the nonlocal case of $\sigma<1$.

\end{remark}

\section{Convergence of the Free Boundaries in Hausdorff Distance as $s\nearrow1$}

The local strict negative forcing \eqref{ConvgCoin1f} also implies a known growth of the solution $u=u^1$ of the limit $p$-obstacle problem from the free boundary 
\[\Phi\cap\omega, \quad\text{where}\quad \Phi=\partial\{u>0\} \quad\text{and} \quad \omega\Subset \Omega.\] In this section we show that growth property, combined with the uniform convergence from Corollary \ref{LpConvgCorUniform},
implies the local convergence of the free boundaries $\Phi^s\cap \bar{\omega}\to\Phi\cap\bar{\omega}$ in Hausdorff distance $d_\mathscr{H}$ as $s\nearrow1$, whenever the local topological property \eqref{ConvgH2} of the limit coincidence set $I^1_{\bar{\omega}}=I^1\cap \bar{\omega}$ holds. 

We recall that the Hausdorff distance between the sets $A$ and $C$ is defined by 
\[d_\mathscr{H} (A,C) := \max \{\sup_{x\in A} d(x,C), \sup_{y\in C} d(y,A)\} = \inf\{\delta>0: B_\delta(A)\ni C\text{ and }B_\delta(C)\ni A\},\] 
where $B_\delta(A)=\bigcup_{x\in A}B_\delta(x)$ is the $\delta$-parallel body to the set $A$, for the ball $B_\delta(x)$ of radius $\delta$ centred at $x$. We also recall that the space $\mathscr{H} (\bar{\omega})$ of all compact subsets of the compact set  $\bar{\omega}\subset\mathbb{R}^n$ is a compact metric space for the Hausdorff distance $d_\mathscr{H}$.

We start by recalling, in the local case of $s=1$, the well known growth property of the solution from the free boundary, which is given, for instance, in Lemma 3.1 of \cite{KarpKilpelainenPetrosyanShahgholian2000JDERegFreeBdryPLap}, in Lemma 1 of \cite{Andersson2020SPMJRegFreeBdryPLap} for $p>2$, and as Proposition 3.1 in \cite{Z+Z+Z2013PIASRegFreeBdryQuasilinearElliptic} for general quasilinear elliptic operators. This growth condition, used in the linear case $p=2$ by Caffarelli in \cite{Caffarelli1981pLapCoincidenceStability}, still holds for the linear fractional Laplacian by Lemma 3.2 of \cite{BarriosFigalliRosoton2018AJMRegFreeBdryFracLap}, where no stability results are known for the free boundary.
\begin{lemma}\label{GrowthCondLemma}
    For the solution $u=u^1$ to the problem \eqref{ClassicalObsProbLg} with $f\in L^\infty(\Omega)$ and $s=1$, there exists a constant $r_1$ depending only on $p$ such that for each $z\in\bar{\Omega}_+=\overline{\{u>0\}}$ and $r\in(0,r_1)$ satisfying $B_r(z)\subset\Omega$, we have 
    \begin{equation}\label{GrowthCond}\sup_{B_r(z)\cap\Omega_+^1}u\geq C_1r^{\frac{p}{p-1}}\end{equation} 
    for some positive constant $C_1$ depending only on $p$ and $r_1$.

\end{lemma}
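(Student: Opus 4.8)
The plan is to establish the non-degenerate growth \eqref{GrowthCond} by a comparison argument with an explicit barrier, exploiting the Lewy--Stampacchia lower bound $-\Delta_p u^1 \geq f$ together with the fact that $u^1>0$ on a neighbourhood of $z$ (which holds for $z\in\bar\Omega_+$ by continuity once we pick a point of $\Omega_+$ inside $B_r(z)$, or by a limiting argument if $z\in\Phi$). First I would reduce to the case $z\in\Omega_+$, since for $z\in\bar\Omega_+$ we may take $z_j\in\Omega_+$ with $z_j\to z$ and pass to the limit, using the continuity of $u^1$ from Theorem~\ref{LocalRegEllipNonlin1} and the fact that the estimate \eqref{GrowthCond} is closed under such limits. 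For $z\in\Omega_+$, the key point is that in the open set $\{u^1>0\}$ the obstacle is not active, so $-\Delta_p u^1 = f \geq -\|f\|_{L^\infty(\Omega)}$ there.

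Next I would build a radial supersolution-type comparison function. Consider, on an annulus or ball, the function $w(x) = A\,|x-z|^{p/(p-1)}$, or more precisely a suitable affine combination of $1$ and $|x-z|^{p/(p-1)}$; a direct computation gives $-\Delta_p w = c_{n,p} A^{p-1}$, a constant depending only on $n$ and $p$. Choosing $A$ so that $c_{n,p}A^{p-1} \le -\|f\|_{L^\infty(\Omega)} \le -\Delta_p u^1$ on $\{u^1>0\}\cap B_r(z)$, the function $w$ is a subsolution relative to $u^1$ there. The idea is then to apply the comparison principle for the $p$-Laplacian on the connected component of $\{u^1>0\}$ containing $z$, intersected with $B_r(z)$: on the part of $\partial(\{u^1>0\}\cap B_r(z))$ lying on the free boundary, $u^1=0$ while $w$ can be arranged nonnegative after subtracting its boundary value; comparing boundary values forces $\sup_{B_r(z)\cap\Omega_+^1} u^1 \geq w$ evaluated appropriately, which yields a lower bound of the form $C_1 r^{p/(p-1)}$. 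The restriction $r<r_1$ with $r_1=r_1(p)$ enters to guarantee the comparison function has the right sign/monotonicity on the relevant scale and that the constant $A$ can be chosen uniformly.

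The main obstacle I anticipate is handling the geometry of the domain $\{u^1>0\}\cap B_r(z)$ carefully: this set need not be an annulus, and the free boundary is not assumed regular, so one must run the comparison on $\omega' = (\text{component of }\{u^1>0\})\cap B_r(z)$ and control $u^1$ and $w$ on the two pieces of $\partial\omega'$ (the spherical part $\partial B_r(z)$ and the free-boundary part) without knowing smoothness of $\Phi$. The standard device, as in \cite{KarpKilpelainenPetrosyanShahgholian2000JDERegFreeBdryPLap} and \cite{Z+Z+Z2013PIASRegFreeBdryQuasilinearElliptic}, is to argue by contradiction: if $\sup_{B_r(z)\cap\Omega_+^1}u^1$ were too small, then a rescaled barrier would sit strictly above $u^1$ on all of $\partial\omega'$, and the comparison principle would force $u^1 < 0$ somewhere inside, contradicting $u^1\ge 0$; one then extracts $C_1$ and $r_1$ from the scaling. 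I would present the argument in this contradiction form, since it sidesteps any need to describe $\partial\omega'$ explicitly and only uses the maximum principle for $-\Delta_p$ on a bounded open set, the $L^\infty$ bound on $f$, and the continuity of $u^1$.
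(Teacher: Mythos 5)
The paper does not actually prove this lemma: it is recalled from the literature (Lemma 3.1 of \cite{KarpKilpelainenPetrosyanShahgholian2000JDERegFreeBdryPLap}, Lemma 1 of \cite{Andersson2020SPMJRegFreeBdryPLap}, Proposition 3.1 of \cite{Z+Z+Z2013PIASRegFreeBdryQuasilinearElliptic}), and your overall architecture (reduce to $z\in\Omega_+$, compare with a radial barrier $A|x-z|^{p/(p-1)}$ on the component of $\{u>0\}\cap B_r(z)$ containing a point of positivity, argue by contradiction, then pass to $z\in\bar\Omega_+$ by continuity) is indeed the standard one. However, the step that makes the argument work is wrong as written, and this is a genuine gap, not a cosmetic one.

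You only invoke $f\in L^\infty(\Omega)$, through $-\Delta_p u^1=f\ge-\norm{f}_{L^\infty(\Omega)}$ on $\{u^1>0\}$, i.e. $\Delta_p u^1\le\norm{f}_{L^\infty(\Omega)}$. This is the wrong-direction inequality: it gives the \emph{upper} growth bound near the free boundary (optimal regularity), not the nondegeneracy \eqref{GrowthCond}. The lower bound needs $\Delta_p u^1\ge\lambda>0$ on $\{u^1>0\}$, i.e. the standing hypothesis \eqref{ConvgCoin1f} ($f\le-\lambda<0$ locally), under which the paper invokes the lemma; the constant $C_1$ genuinely depends on $\lambda$ and $n$ as well. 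With only an $L^\infty$ bound the statement is false: in one dimension with $p=2$, $u(x)=(x^+)^3$ solves the obstacle problem locally with $f(x)=-6x^+\in L^\infty$, $f<0$ on $\{u>0\}$ but not bounded away from zero, and $\sup_{B_r(0)}u=r^3$ violates \eqref{GrowthCond} for small $r$. Your own barrier choice exposes the problem: for $w=A|x-z|^{p/(p-1)}$ one computes $\Delta_p w=n\left(\tfrac{p}{p-1}\right)^{p-1}A^{p-1}>0$, so $-\Delta_p w$ is a \emph{negative} constant (not positive as you wrote), and the requirement ``$c(n,p)A^{p-1}\le-\norm{f}_{L^\infty(\Omega)}$'' cannot be satisfied by any $A>0$. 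The correct closing is: choose $A$ with $c(n,p)A^{p-1}\le\lambda$, so that $\Delta_p w\le\lambda\le\Delta_p u^1$ on the component $D$ of $\{u^1>0\}\cap B_r(y)$ containing a point $y$ with $u^1(y)>0$; if $\sup_{B_r(y)\cap\Omega_+^1}u^1<A\,r^{p/(p-1)}$, then $u^1\le w$ on $\partial D$ (on the spherical part by the smallness assumption, on the free-boundary part because $u^1=0\le w$), hence $u^1\le w$ in $D$ by the comparison principle, contradicting $u^1(y)>0=w(y)$ at the centre --- not, as you state, by forcing $u^1<0$ somewhere, which comparison with a nonnegative barrier cannot do. With \eqref{ConvgCoin1f} restored, the barrier corrected, and the contradiction taken at the centre point, your sketch becomes the standard proof.
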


Our main result is as follows.
\begin{theorem}
    Let $u^s$ be the solution to \eqref{LpObsProbConvgS} for $0<s<1$ and $u=u^1$ be the solution to \eqref{ClassicalObsProbLg}. Assume $f\in L^\infty(\Omega)$ satisfies \eqref{ConvgCoin1f}, and the limit closed set $I_{\bar{\omega}}=I\cap \bar{\omega}$ ($I=I^1$), for any smooth open set $\omega \Subset\Omega$, is such that 
    \begin{equation}\label{ConvgH2}I_{\bar{\omega}}=\overline{int(I_{\bar{\omega}})}. \end{equation} Then the associated free boundaries $\Phi^s=\partial I^s\cap\Omega$ are such that \[\Phi^s\cap \bar{\omega}\to\Phi\cap\bar{\omega}\text{ in Hausdorff distance as $s\nearrow1$}.\]
\end{theorem}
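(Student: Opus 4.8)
The plan is to prove the two set inclusions that together give $d_\mathscr{H}(\Phi^s\cap\bar\omega,\Phi\cap\bar\omega)\to 0$, namely that every point of $\Phi\cap\bar\omega$ is close to $\Phi^s\cap\bar\omega$ and vice versa, for $s$ close to $1$. Throughout I would work with the uniform convergence $u^s\to u$ in $C^\lambda(\bar\omega)$ from Corollary~\ref{LpConvgCorUniform}, the local equation $-\Delta_p u=f-\chi_{{}_I}f^-$ a.e.\ in $\omega$, and the measure-theoretic convergence $\chi_{{}_{I^s_\omega}}\to\chi_{{}_{I_\omega}}$ in $L^r(\omega)$ from the previous section, all of which are available under the hypothesis $f\le-\lambda<0$.

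\textbf{Step 1: uniform positivity away from $I$.} Fix $\omega'\Subset\omega$. Using the topological hypothesis \eqref{ConvgH2}, $I_{\bar\omega}$ is the closure of its interior, so its complement in $\bar\omega$ is "thin" in a suitable sense; more precisely, for each $z\in\Phi\cap\bar\omega$ and each small $r$, the ball $B_r(z)$ meets $\Omega_+^1=\{u>0\}$, and by the growth Lemma~\ref{GrowthCondLemma} we have $\sup_{B_r(z)}u\ge C_1 r^{p/(p-1)}$. Combined with $\|u^s-u\|_{C^0(\bar\omega)}\to 0$, this gives a point in $B_r(z)$ where $u^s>0$ once $s$ is close enough to $1$ (depending on $r$), hence $B_r(z)$ meets $\Omega_+^s$. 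On the other hand, any $z$ in the interior of $I_{\bar\omega}$ is at positive distance from $\Phi$, and there $u\equiv 0$ on a neighbourhood; I would need to show $u^s$ also vanishes (or is $\le$ something that forces the relevant inclusion) on a slightly smaller neighbourhood for $s$ near $1$ — this is where the equation $-\Delta_p^s u^s = f^+-f^-\vartheta^s$ together with $f\le-\lambda$ and the $L^r$-convergence of $\chi_{{}_{I^s_\omega}}$ enters.

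\textbf{Step 2: the two Hausdorff inclusions.} For "$\Phi\cap\bar\omega\subset B_\delta(\Phi^s\cap\bar\omega)$": given $z\in\Phi\cap\bar\omega$ and $\delta>0$, pick $r<\delta$; by Step 1, $B_r(z)$ meets both $\Omega_+^s$ (growth $+$ uniform convergence) and $\mathrm{int}(I)$ (topological hypothesis $+$ measure convergence, so $u^s=0$ somewhere in $B_r(z)$ too for $s$ large). A connectedness/continuity argument on the segment or on $B_r(z)$ then forces $\Phi^s$ to intersect $B_r(z)\subset B_\delta(z)$. For the reverse "$\Phi^s\cap\bar\omega\subset B_\delta(\Phi\cap\bar\omega)$": suppose not — then there is $\delta_0>0$, a sequence $s_k\nearrow 1$, and $z_k\in\Phi^{s_k}\cap\bar\omega$ with $d(z_k,\Phi)\ge\delta_0$; by compactness $z_k\to z_\infty\in\bar\omega$ with $d(z_\infty,\Phi)\ge\delta_0$, so $z_\infty$ lies either in $\Omega_+^1$ or in $\mathrm{int}(I_{\bar\omega})$. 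If $z_\infty\in\Omega_+^1$, Corollary~\ref{LpConvgCorCoin} gives $u^{s_k}>0$ on a fixed neighbourhood of $z_\infty$, contradicting $z_k\in\Phi^{s_k}$. If $z_\infty\in\mathrm{int}(I_{\bar\omega})$, then on a fixed ball $B_\rho(z_\infty)\subset I$ we have $-\Delta_p u=f<0$ wait — rather $u\equiv 0$ there, and I must show $u^{s_k}\equiv 0$ (or at least that $B_\rho(z_\infty)\cap\Phi^{s_k}=\emptyset$) for large $k$; this again uses the nondegeneracy $f\le-\lambda$ plus the fractional equation: if $u^{s_k}>0$ somewhere in a smaller ball, the growth-type lower bound forces $u^{s_k}$ to be bounded below at points where $u$ vanishes, contradicting uniform convergence.

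\textbf{Main obstacle.} The delicate point is controlling the behaviour of $u^s$ inside $\mathrm{int}(I)$ — i.e.\ showing the nonlocal solutions actually ``see'' the interior of the limit coincidence set and coincide with the obstacle (or stay genuinely below the positivity threshold) there — since the nonlocal operator does not localise: $-\Delta_p^s u^s(x)$ depends on values of $u^s$ far from $x$, so vanishing of $u$ on a ball does not immediately transfer to $u^s$. The resolution is to combine three ingredients that are all independent of $s$ near $1$: the uniform $C^\beta_{loc}$ bound from Theorem~\ref{LocalRegEllipNonlin1}, the $L^r$-convergence $\chi_{{}_{I^s_\omega}}\to\chi_{{}_{I_\omega}}$ (so $I^s$ fills up most of $\mathrm{int}(I)$ in measure), and a nonlocal version of the nondegeneracy growth estimate — or, more cheaply, a contradiction argument: a sequence of points $x_k\in\mathrm{int}(I)$ with $u^{s_k}(x_k)>0$ would, by the uniform Hölder bound and $u(x_k)=0$, force $u^{s_k}$ to exceed a fixed positive value on a ball of fixed radius, contradicting $u^{s_k}\to u=0$ uniformly on $\bar\omega$. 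Making ``growth from the free boundary of $u^s$ uniformly in $s$'' precise — if one needs it for the first inclusion rather than the contradiction route — would be the technically heaviest part, and I would lean instead on the uniform modulus of continuity plus the measure convergence to avoid it where possible.
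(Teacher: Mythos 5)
Your overall architecture is the same as the paper's: uniform convergence $u^s\to u$ in $C^0(\bar\omega)$ (Corollary \ref{LpConvgCorUniform}), the growth Lemma \ref{GrowthCondLemma} for the limit solution, the topological hypothesis \eqref{ConvgH2}, and the two Hausdorff inclusions. Your treatment of the inclusion $\Phi\cap\bar\omega\subset B_\delta(\Phi^s\cap\bar\omega)$ is a genuinely different (and workable) route: you produce in $B_r(z)$ a point where $u^s>0$ (growth of $u$ plus uniform convergence) and a point where $u^s=0$ (from \eqref{ConvgH2} together with the $L^1$-convergence $\chi_{{}_{I^s_\omega}}\to\chi_{{}_{I_\omega}}$, which is available under \eqref{ConvgCoin1f}), and conclude by a first-hitting argument on a segment; the paper instead argues on balls missing $\Phi^s$, using Corollary \ref{LpConvgCorCoin} and $f\le-\lambda$ to force $\mathrm{int}(I)\cap B_\rho=\emptyset$. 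Your variant is fine provided you add the compactness step making $|B_r(z)\cap \mathrm{int}(I_{\bar\omega})|\ge m_0>0$ uniform over $z\in\Phi\cap\bar\omega$, so that one threshold on $\|\chi_{{}_{I^s_\omega}}-\chi_{{}_{I_\omega}}\|_{L^1(\omega)}$ works for all $z$ simultaneously.

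The genuine gap is in the reverse inclusion, exactly at the point you flag as the main obstacle: excluding free-boundary points of $u^s$ deep inside $\mathrm{int}(I_{\bar\omega})$. Your ``more cheaply'' contradiction argument does not work: the uniform $C^\beta_{loc}$ bound of Theorem \ref{LocalRegEllipNonlin1} is an \emph{upper} modulus of continuity, so $u^{s_k}(x_k)>0$ together with $u(x_k)=0$ gives no lower bound for $u^{s_k}$ on any fixed ball. A tiny positive bump of $u^{s_k}$ of height and support shrinking with $k$, located well inside $\mathrm{int}(I)$, is compatible with the uniform Hölder bound, with $u^{s_k}\to 0$ uniformly there, and with $\chi_{{}_{I^{s_k}_\omega}}\to\chi_{{}_{I_\omega}}$ in every $L^r(\omega)$ (measure convergence does not control Hausdorff distance), and yet it puts points of $\Phi^{s_k}$ at a fixed positive distance from $\Phi$. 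What is needed is precisely a nondegeneracy (growth) estimate for $u^s$ \emph{at its own free boundary}, with constants uniform in $s$ as $s\nearrow1$, coming from $f\le-\lambda<0$: if $y^s\in\Phi^s\cap B_{\rho'}(x_0)$ with $u\equiv 0$ on $B_\rho(x_0)$, then $\sup_{\partial B_\epsilon(y^s)\cap\{u^s>0\}}u^s\ge c\,\epsilon^{p/(p-1)}$ with $c$ independent of $s$, which contradicts the uniform convergence $u^s\to u=0$ on $B_\rho(x_0)$. This is how the paper closes the case $u\equiv0$ on $B_\rho(x_0)$ (after transferring the growth condition \eqref{GrowthCond} to $u^s$); in your write-up this uniform nonlocal nondegeneracy is named as an option but neither proved nor replaced by a valid substitute, so the inclusion $\Phi^s\cap\bar\omega\subset B_\delta(\Phi\cap\bar\omega)$ remains unestablished. (A secondary, fixable point: in your compactness argument the limit $z_\infty$ with $d(z_\infty,\Phi\cap\bar\omega)\ge\delta_0$ need not lie in $\Omega^1_+\cup\mathrm{int}(I_{\bar\omega})$ when $z_\infty\in\partial\omega$, since free-boundary points outside $\bar\omega$ may be nearby; this is handled by working in a slightly larger $\omega'\Subset\Omega$.)
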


\begin{proof}
    By Theorem \ref{LpConvgThm}, the solutions $u^s$ converge strongly to $u$ as $s\nearrow1$ in $W^{r,p}_0(\Omega)$ for $r<1$ and also uniformly in $C^\lambda(\bar{\omega})$ for some $\lambda$, $0<\lambda<\beta$, for every open subset $\omega\Subset\Omega$ by Corollary \ref{LpConvgCorUniform}. Therefore, for $s<1$ sufficiently close to $1$, there exists some constant $\delta$ with $2\delta<C_1$ such that 
    \[|u^s(x^s)-u(x^1)|\leq\delta r^\frac{p}{p-1},\] where $x^s,x^1\in B_r(z)\cap\omega_+$ are the points such that 
    \[u^s(x^s)=\sup_{B_r(z)\cap\omega_+}u^s, \qquad  u(x^1)=\sup_{B_r(z)\cap\omega_+} u,\] 
    where $z\in\bar{\omega}_+=\overline{\{x\in \omega:u(x)>0\}}$ and $C_1$ are as in Lemma \ref{GrowthCondLemma}.
    Then, by the choice of $x^s$ and $x^1$, 
    \[\sup_{B_r(z)\cap\omega_+}u^s=u^s(x^s)=u^s(x^s)-u(x^1)+u(x^1)\geq (C_1-2\delta)r^{\frac{p}{p-1}},\] since $u$ satisfies \eqref{GrowthCond}. Hence, $u^s$ also satisfies a growth condition of the form \eqref{GrowthCond}.

    The rest of the proof essentially follows that of Theorem 6:5.8 of \cite{ObstacleProblems}, but for completeness we give the main ideas of the proof. 

    Consider an arbitrary small open ball $B_\rho\subset\omega$ such that $B_\rho\cap\Phi^s=\emptyset$ for infinitely many $s$ as $s\nearrow1$. Then either $u^s=0$ or $u^s>0$ in $B_\rho$. For the first case, by the convergence $u^s\to u$, we have that $u=0$ in $B_\rho$. For the second case, since $u^s$ satisfies $-\Delta_p^s u^s=f$ in $B_\rho\cap\{u^s>0\}$, taking this to the limit $s\nearrow1$, we have that $-\Delta_pu=f$ in the interior of $B_\rho\cap\{u>0\}$, by Corollary \ref{LpConvgCorCoin}, and since $f$ satisfies \eqref{ConvgCoin1f} a.e. in $\omega$, one has $int(I)\cap B_\rho=\emptyset$. But by \eqref{ConvgH2}, $B_\rho\subset\{u>0\}$. Therefore, $B_\rho\cap\Phi=\emptyset$ for both cases, and 
    \[\inf\{\delta>0:\Phi\cap\bar{\omega}\subset B_\delta(\Phi^s\cap\bar{\omega})\}\to0 \quad \text{ as }s\nearrow1.\]

    On the other hand, let $\rho>0$ denote the radius of any open ball $B_\rho\subset\omega$ such that $B_\rho\cap\Phi=\emptyset$. For the case of $u>0$ in $B_\rho$, by the uniform convergence of $u^s$ to $u$ in Corollary \ref{LpConvgCorUniform}, for any $0<\rho'<\rho$ and for all $s$ close enough to 1, $u^s>0$ in $B_{\rho'}$ with $B_{\rho'}\cap\Phi^s=\emptyset$. For the case of $u=0$ in $B_\rho=B_\rho(x_0)$ for some $x_0\in I_{\bar{\omega}}$, suppose one can find a subsequence $y^s\in \Phi^s\cap B_{\rho'}(x_0)$ for some $\rho'>0$ with $\epsilon=\rho-\rho'>0$, such that $u^s(x^s)$ satisfies the uniform growth condition \eqref{GrowthCond} for some $x^s\in \{u^s>0\}\cap\partial B_\epsilon(y^s)\subset B_\rho(x_0)$. By the uniform convergence of $u^s\to u$ in $B_\rho$, as $s\nearrow1$, we have a contradiction with $u=0$ in $B_\rho$. 
    
    Therefore, we have that $B_\rho\cap\Phi^s=\emptyset$ for all $s$ close enough to 1, and it follows that 
    \[\inf\{\delta>0:\Phi^s\cap\bar{\omega}\subset B_\delta(\Phi\cap\bar{\omega})\}\to0 \quad \text{ as }s\nearrow1.\]
\end{proof}
The strictly negative local assumption \eqref{ConvgCoin1f} on the forcing $f$ may be relaxed to the more general nondegeneracy condition \eqref{ConvgH1}. In this case we may still prove a weaker local result, in any smooth open set $\omega \Subset\Omega$, on the convergence of the coincidence sets 
\[I^s_{\bar{\omega}}=I^s\cap \bar{\omega}\]
in Hausdorff distance $d_\mathscr{H}$ as $s\nearrow1$, under the very same limit local regularity \eqref{ConvgH2} on $I_{\bar{\omega}}$.

\begin{theorem}
    Let $u^s$ be the solution to \eqref{LpObsProbConvgS} for $0<s<1$ and $u=u^1$ be the solution to \eqref{ClassicalObsProbLg}. Assume $f\in L^\infty(\Omega)$ satisfies
    \begin{equation}\label{ConvgH1}f(x)\neq0\quad\text{ a.e. }x\in\omega\Subset\Omega,\end{equation} 
    and the limit closed set $I_{\bar{\omega}}$ satisfies \eqref{ConvgH2}. 
    Then \[I^s_{\bar{\omega}}\to I_{\bar{\omega}}\text{ in Hausdorff distance as $s\nearrow1$}.\]
\end{theorem}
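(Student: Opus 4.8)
The plan is to mimic the structure of the proof of the previous theorem (convergence of free boundaries), but now working with the coincidence sets directly, since under the weaker assumption \eqref{ConvgH1} we no longer have access to the nondegenerate growth from the free boundary that forced the lower bound. As before, the Hausdorff convergence $I^s_{\bar\omega}\to I_{\bar\omega}$ is equivalent to the conjunction of the two one-sided statements
\[\inf\{\delta>0: I_{\bar\omega}\subset B_\delta(I^s_{\bar\omega})\}\to0 \quad\text{and}\quad \inf\{\delta>0: I^s_{\bar\omega}\subset B_\delta(I_{\bar\omega})\}\to0\qquad\text{as }s\nearrow1,\]
and I would prove each separately by a ball-covering argument, exactly as in Theorem 6:5.8 of \cite{ObstacleProblems}. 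Throughout I use the uniform convergence $u^s\to u$ in $C^\lambda(\bar\omega)$ from Corollary \ref{LpConvgCorUniform}, together with Corollary \ref{LpConvgCorCoin} (so that $-\Delta^s_pu^s=f=-\Delta_pu$ a.e. on compact subsets of $\Omega^1_+$ for $s$ close to $1$), and the measure-convergence $\chi_{I^s_\omega}\to\chi_{I_\omega}$ in every $L^r$ from the theorem in Section 5, which holds here since \eqref{ConvgH1} guarantees $\mathrm{meas}(\{f=0\}\cap\omega)=0$ and hence the limit solution satisfies $-\Delta_pu=f-\chi_{{}_I}f^-$ a.e. in $\omega$ with $I$ the coincidence set.

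For the first one-sided bound, I take an arbitrary small open ball $B_\rho\subset\omega$ with $B_\rho\cap I^s=\emptyset$ for infinitely many $s\nearrow1$, i.e. $u^s>0$ in $B_\rho$ along this sequence. Since $-\Delta^s_pu^s=f$ in $B_\rho$ along the sequence, passing to the limit via Corollary \ref{LpConvgCorCoin} gives $-\Delta_pu=f$ in the interior of $B_\rho\cap\{u>0\}$; combined with the nondegeneracy $f\neq0$ a.e. in $\omega$ this forces $\mathrm{int}(I)\cap B_\rho=\emptyset$, and then the topological hypothesis \eqref{ConvgH2} upgrades this to $I_{\bar\omega}\cap B_\rho=\emptyset$. (If instead $u^s=0$ in $B_\rho$ along a subsequence, uniform convergence gives $u=0$ in $B_\rho$, but then $B_\rho\subset I$, contradicting $B_\rho\cap I^s=\emptyset$ for infinitely many $s$ only if one is careful — so in fact the alternative to handle is $B_\rho\cap I=\emptyset$, which is what we want.) Covering $\bar\omega$ by finitely many such balls yields $I_{\bar\omega}\subset B_\delta(I^s_{\bar\omega})$ for $s$ close to $1$.

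For the reverse bound I argue by contradiction: if it fails there is $\varepsilon_0>0$, a sequence $s_k\nearrow1$, and points $x^{s_k}\in I^{s_k}_{\bar\omega}$ with $d(x^{s_k},I_{\bar\omega})\geq\varepsilon_0$. Passing to a subsequence, $x^{s_k}\to x_*\in\bar\omega$ with $d(x_*,I_{\bar\omega})\geq\varepsilon_0$, so $x_*\in\Omega^1_+$ and in fact $B_r(x_*)\Subset\Omega^1_+$ for small $r>0$; by Corollary \ref{LpConvgCorCoin}, $u^{s_k}>0$ on $B_r(x_*)$ for $k$ large, contradicting $x^{s_k}\in I^{s_k}\cap B_r(x_*)$. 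This half is actually the easier one here precisely because we only need to detect $\{u>0\}$, where the uniform convergence suffices, and we do not need any growth estimate from below. The main obstacle, therefore, is the first one-sided bound: the delicate point is to rule out the possibility that $B_\rho$ meets $I^s$ for all small $s$ yet $B_\rho\cap I_{\bar\omega}=\emptyset$ would fail, i.e. to show that the limiting coincidence set has no ``phantom'' pieces of positive Hausdorff-but-zero-Lebesgue measure disconnected from $\mathrm{int}(I)$ — and this is exactly what the hypothesis \eqref{ConvgH2} is designed to exclude, combined with the fact that on any ball where $u^s>0$ one can pass the equation $-\Delta^s_pu^s=f$ to the limit and invoke $f\neq0$ a.e. to kill the interior of $I$ there. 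I would also remark that, unlike in the previous theorem, no uniform growth condition \eqref{GrowthCond} is available under \eqref{ConvgH1} alone, which is why the conclusion is only stated for the coincidence sets and not for the free boundaries.
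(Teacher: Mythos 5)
Your second one-sided bound (every point of $I^s_{\bar{\omega}}$ is eventually close to $I_{\bar{\omega}}$) is fine: it only needs the uniform convergence $u^s\to u$ on $\bar{\omega}$, and it is essentially the paper's step showing $I^*\subset I_{\bar{\omega}}$ after extracting a Hausdorff-convergent subsequence. The genuine gap is in your first one-sided bound. On a ball $B_\rho$ where $u^s>0$ along a sequence $s_k\nearrow1$, you pass to the limit ``via Corollary \ref{LpConvgCorCoin}'' and obtain $-\Delta_p u=f$ only \emph{in the interior of} $B_\rho\cap\{u>0\}$, and then claim this together with \eqref{ConvgH1} forces $int(I)\cap B_\rho=\emptyset$. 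That step is a non sequitur: the set you must rule out, $int(I)\cap B_\rho$, lies in $\{u=0\}$, where Corollary \ref{LpConvgCorCoin} gives no information at all (it is a statement about compact subsets of $\Omega^1_+$). To produce the contradiction with $f\neq0$ you need the limit equation $-\Delta_p u=f$ on \emph{all} of $B_\rho$, so that on any open subset of $int(I)\cap B_\rho$, where $u\equiv0$ and hence $-\Delta_p u=0$, you get $f=0$ a.e., contradicting \eqref{ConvgH1}. The missing ingredient is the weak (distributional / $L^\infty$-weak$^*$) convergence $-\Delta^s_p u^s\rightharpoonup-\Delta_p u$ from \eqref{ConvgSigma}, applied to test functions supported in $B_\rho$ where $-\Delta^{s_k}_p u^{s_k}=f$ a.e.; this is exactly the role played in the paper's proof by the inclusion $supp(-\Delta^s_pu^s-f)\cap\bar{\omega}\subset I^s_{\bar{\omega}}$ combined with the support Lemma \ref{ConvgLemma} after extracting $I^s_{\bar{\omega}}\to I^*$ in $\mathscr{H}(\bar{\omega})$. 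With that substitution your ball/contradiction scheme does close (the upgrade from $int(I)\cap B_\rho=\emptyset$ to $I_{\bar{\omega}}\cap B_\rho=\emptyset$ via \eqref{ConvgH2} is correct, since an open set disjoint from $int(I_{\bar{\omega}})$ is disjoint from its closure), and it becomes a local variant of the paper's compactness argument rather than a different proof.

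A secondary, non-fatal flaw: you invoke the Section 5 theorem on $\chi_{{}_{I^s_\omega}}\to\chi_{{}_{I_\omega}}$ in $L^r$ and the identity $-\Delta_p u=f-\chi_{{}_I}f^-$ under \eqref{ConvgH1} alone. That theorem is proved under the stronger nondegeneracy \eqref{ConvgCoin1f}, and the paper only claims the relaxation to $f\neq0$ a.e. for $p=2$ (via $W^{2,q}_{loc}$ regularity); for general $p$ the zero-measure property of the free boundary, hence \eqref{qi}, is not available under \eqref{ConvgH1}. Since your argument never actually uses this ingredient, you should simply delete it rather than rely on it.
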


\begin{proof}
    This follows also from the uniform convergence $u^s\to u$. The argument of the proof is similar to Theorem 6:6.4 of \cite{ObstacleProblems}, as, by the compactness of $\mathscr{H}(\bar{\omega})$, we can extract a subsequence of $\{I^s_{\bar{\omega}}\}_s$ such that for some $I^*\in \mathscr{H}(\bar{\omega})$, 
    \begin{equation}\label{ConvgH10}I^s_{\bar{\omega}}\to I^* \quad \text{ in }\mathscr{H}(\bar{\omega}),\end{equation} the conclusion follows if we show $I^*=I_{\bar{\omega}}$.

    As $d_\mathscr{H}(I^s_{\bar{\omega}},I^*)\to0$, for each $x\in I^*$, there exists an $x^s\in I^s_{\bar{\omega}}$ such that $x^s\to x$. Hence, $u^s\to u$ in $C^0(\bar{\omega})$ implies 
    \[|u(x)| = |u(x) - u^s(x^s)| \leq |u(x) - u(x^s)| + |u(x^s) - u^s(x^s)| \to 0,\] so that $x\in I_{\bar{\omega}}$ and $I^*\subset I_{\bar{\omega}}$.

    Conversely, to prove $I_{\bar{\omega}}\subset I^*$, we show that the assumption \eqref{ConvgH1} implies $I_{\bar{\omega}} = I^* \cup N$ with $int(N)=\emptyset$. Suppose that that exists an open set $W\subset I_{\bar{\omega}}\backslash I^*$ such that $W\neq\emptyset$. Since $u^s$ solves the obstacle problem \eqref{LpObsProbConvgS}, by Theorem \ref{LocalRegEllipNonlin1}, we have $-\Delta_p^su^s=f$ in $\{u^s>0\}$, and so
    \begin{equation}\label{ConvgH11}supp\left(-\Delta_p^su^s-f\right)\cap\bar{\omega}\subset I^s_{\bar{\omega}}\quad\text{ for every }0<s<1.\end{equation} Then by Theorem \ref{LpConvgThm} we have $\Delta_p^su^s\rightharpoonup\Delta_pu$, by  \eqref{ConvgSigma}, and using \eqref{ConvgH10} with Lemma \ref{ConvgLemma} below, from \eqref{ConvgH11},  in the limit we conclude
    \begin{equation}\label{ConvgH12}supp (-\Delta_pu-f) \cap\bar{\omega}\subset I^*\end{equation} 
    and hence 
    \begin{equation}\label{ConvgH13} -\Delta_pu = f \quad \text{ in }W\subset\omega\backslash I^*.\end{equation} 
    On the other hand, $W$ is open and is included in $I_{\bar{\omega}}=\{x\in\bar{\omega}: u=0\}$, so one has $-\Delta_pu=0$ in $W$. Consequently, $f=0$ in $W$ which contradicts \eqref{ConvgH1}. Then $int(I_{\bar{\omega}}) = int(I^*)$ and, since $I^*$ is closed, by the assumption \eqref{ConvgH2}, we have that 
    \[I_{\bar{\omega}} = \overline{int(I_{\bar{\omega}})} = \overline{int(I^*)} \subset I^*.\] This concludes the proof.
\end{proof}

\begin{lemma}[Lemma 6:6.4 of \cite{ObstacleProblems}]\label{ConvgLemma}
    In a compact set $\bar{\omega}\subset \Omega$, let $\{T_s\}$ denote a sequence of distributions verifying $T_s\rightharpoonup T_1$ in $\mathcal{D}'(\Omega)$, as $s\nearrow1$, and $supp (T_s)\subset F_s$, with $F_s\in\mathcal{H}(\bar{\omega})$,  for every $0<s<1$. Then if $F_s\to F_1$ in $\mathcal{H}(\bar{\omega})$, one has $supp (T_1)\subset F_1$.
\end{lemma}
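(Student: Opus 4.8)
The plan is to verify the elementary support criterion: to show $\operatorname{supp}(T_1)\subset F_1$ it is enough to check that $\langle T_1,\varphi\rangle=0$ for every $\varphi\in C_c^\infty(\Omega)$ whose support is disjoint from $F_1$. Since $F_1\in\mathscr{H}(\bar\omega)$ is compact, hence closed, this is precisely the statement that $\operatorname{supp}(T_1)\subset F_1$. So I would fix such a $\varphi$, write $K=\operatorname{supp}(\varphi)$, which is a compact subset of the open set $\Omega\setminus F_1$, and note that by compactness of $K$ and closedness of $F_1$ one has $d(K,F_1)=:2\epsilon>0$.

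Next I would use the Hausdorff convergence to move $\varphi$ off $F_s$ as well, uniformly for $s$ near $1$. Since $d_\mathscr{H}(F_s,F_1)\to0$ as $s\nearrow1$, there is $s_0<1$ with $d_\mathscr{H}(F_s,F_1)<\epsilon$, and in particular $F_s\subset B_\epsilon(F_1)$, for all $s\in(s_0,1)$. Combining this one-sided inclusion with $d(K,F_1)=2\epsilon$ and the triangle inequality gives $d(K,F_s)\geq\epsilon>0$, so $K\cap F_s=\emptyset$, i.e.\ $\operatorname{supp}(\varphi)\cap\operatorname{supp}(T_s)=\emptyset$. Hence $\langle T_s,\varphi\rangle=0$ for every $s\in(s_0,1)$.

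Finally I would pass to the limit along $s\nearrow1$: the hypothesis $T_s\rightharpoonup T_1$ in $\mathcal{D}'(\Omega)$ means exactly $\langle T_s,\varphi\rangle\to\langle T_1,\varphi\rangle$, so $\langle T_1,\varphi\rangle=\lim_{s\nearrow1}\langle T_s,\varphi\rangle=0$. Since $\varphi$ was an arbitrary test function with support disjoint from $F_1$, this yields $\operatorname{supp}(T_1)\subset F_1$, as claimed.

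The argument is soft, so there is no serious obstacle; the only point requiring a little care is the Hausdorff bookkeeping, namely translating $d_\mathscr{H}(F_s,F_1)<\epsilon$ into the single inclusion $F_s\subset B_\epsilon(F_1)$ that is actually used (the reverse inclusion is irrelevant here) and then checking that $d(K,F_1)>\epsilon$ forces $K\cap F_s=\emptyset$. One should also observe that the limit is taken along the continuous parameter $s\nearrow1$ rather than along a sequence, but this is immaterial since distributional convergence is tested against the fixed function $\varphi$.
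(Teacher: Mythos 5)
Your argument is correct and is essentially the same soft test-function argument behind the cited Lemma 6:6.4 of \cite{ObstacleProblems}, which the paper quotes without reproving: a test function supported away from $F_1$ is, by Hausdorff convergence, supported away from $F_s$ for $s$ close to $1$, so $\langle T_s,\varphi\rangle=0$ and the distributional convergence gives $\langle T_1,\varphi\rangle=0$. Nothing further is needed.
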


\begin{remark}
    The strict interior convergence in the closure of the smooth open set $\omega \Subset\Omega$ is necessary because we only have a uniform H\"older regularity of the solutions $u^s$ as $s\nearrow1$ only in the interior of $\Omega$, as given in the second part of Theorem \ref{LocalRegEllipNonlin1}. It would be interesting to see if the global estimate of \cite{IannizzottoMosconiSquassina2016RMI_FracPLapGlobalHolder} can be made robust up to the boundary as $s\nearrow1$, 
    so that we can take $\omega$ up to the boundary of $\Omega$ instead of being in the interior.
\end{remark}

\begin{remark} It may be useful to recall the following known properties of the Hausdorff convergence of compact subsets of $\mathbb{R}^n$ with respect to other different types of convergence, from
Theorems 3:4.5 and 3:4.6 of \cite{ObstacleProblems}, and their references, for instance. 

    (i) The Hausdorff convergence $d_\mathscr{H} (C_s,C)\to 0$ implies the Kuratowski convergence  
    $C_s\xrightarrow[]{K}C$, which means that for all $x\in C$, there exists $x_s\in C_s$ such that $x_s\to x$, and for any subsequence $x_s\in C_s$, its limit $x\in C$.

    (ii) If $C$ and $\hat{C}$ are bounded convex subsets of $\mathbb{R}^n$, then 
    \[d_\mathscr{H}(C,\hat{C}) = d_\mathscr{H}(\partial C, \partial\hat{C}).\]

    (iii) For the class of uniformly Lipschitz bounded domains $C_s$, their convergence to $C$ in Lebesgue measure $d_\mathcal{L}(C_s,C):=\int_{\bar{\omega}}|\chi_{C_s}-\chi_C|\,dx\to0$, is equivalent to the convergence in Hausdorff distance $d_\mathscr{H}(C_s,C)\to0$, as well as to $d_\mathscr{H}(\partial C_s,\partial C)\to0$ and also, locally, to the equivalence of the uniform convergence of their graphs $\partial C_s\to\partial C$.
\end{remark}

\begin{remark}
The local topological assumption \eqref{ConvgH2} is in fact an assumption on the local regularity of the limit free boundary. For instance, in the case of $p=2$ for the Laplacian, if  $f\in W^{1,1}(\Omega)$, then it is known that the local coincidence sets are Caccioppoli sets \cite{Caffarelli1981pLapCoincidenceStability}, which means that the free boundary is $(n-1)$-rectifiable, as discussed in Theorem 6:8.6 of \cite{ObstacleProblems} in a slightly more general case. With that regularity, the reduced boundary, given by the points which have positive upper Lebesgue densities, may be written as a countable union of $C^1$ hypersurfaces. Additionally, with more regularity on $f$, much more on the singular points of the free boundary has been established (see \cite{Figalli++2020_IHES_GenericRegularityOfFreeBoundar}). For $p\neq 2$ in the obstacle problem with zero source $f$ and nonzero nondegenerate smooth obstacle, which is not exactly equivalent to the obstacle problem treated here, in Theorem 1.8 of \cite{Figalli++2017_JDE_RegFB_p-ObsPb} the authors proved that the free boundary is countably  $(n-1)$-rectifiable, which is the only regularity known in the nonlinear case.
\end{remark}

\begin{remark}
Indeed, it is not known how the topological assumption \eqref{ConvgH2} can be generalised to the fractional case for $0<s<1$. The only results in this direction available are for the fractional Laplacian, in \cite{BarriosFigalliRosoton2018AJMRegFreeBdryFracLap}. In this work, the authors showed that when the obstacle problem has 0 source function and sufficiently smooth obstacle, the free boundary can be decomposed into a union of regular points and singular points, and the singular points are locally contained within a stratified union of $C^1$ submanifolds, giving the same structure as the free boundary in the local case $s=1$.
\end{remark}

\appendix
\appendix

\noindent \textbf{Acknowledgements.} 
The research of C. Lo was partially done under the FCT PhD fellowship in the framework of the LisMath doctoral programme at the University of Lisbon, and was partially completed as a postdoc in the Liu Bie Ju Centre for Mathematical Sciences at the City University of Hong Kong. The research of J. F. Rodrigues was partially done under the framework of the Project UIDB/04561/2020 at CMAFcIO/ULisboa.

\printbibliography

\end{document}